\documentclass[12pt]{amsart}

\usepackage{amssymb,stmaryrd}



\usepackage{mathtools}

\usepackage{enumerate}
\usepackage{color}

 \newcommand{\la}{\mathfrak{g}}

\newcommand{\e}{\mathfrak{e}(3)}

\newcommand{\h}{\mathfrak{h}}

  \theoremstyle{definition}
  \newtheorem{definition}{Definition}[section]
  \newtheorem{example}[definition]{Example}
  \newtheorem{remark}[definition]{Remark}

  \theoremstyle{plain}
  \newtheorem{lemma}[definition]{Lemma}
  \newtheorem{proposition}[definition]{Proposition}
  \newtheorem{theorem}[definition]{Theorem}

\title[Abelian extensions of $D_N$ in $E_{N+1}$ ]{Classification of embeddings of abelian extensions of $D_n$ into $E_{n+1}$}


\begin{document}

\author[]{Andrew Douglas}

\author[]{Delaram Kahrobaei}
\author[]{Joe Repka}

\address[]{CUNY Graduate Center and New York City College of Technology, City University of New York, USA}
\email{adouglas2@gc.cuny.edu}
\email{dkahrobaei@gc.cuny.edu}
\address[]{ Department of Mathematics, University of Toronto, Canada}
\email{repka@math.toronto.edu}

\keywords{Classification of Lie algebra embeddings, exceptional Lie algebras, special orthogonal algebras, indecomposable representations} 
\subjclass[2010]{17B05, 17B10, 17B20, 17B25}

\begin{abstract}
An abelian extension of the special orthogonal Lie algebra $D_n$ is a nonsemisimple Lie algebra $D_n \inplus V$, where $V$ is a finite-dimensional representation of $D_n$, with the understanding that $[V,V]=0$.   
We determine all abelian extensions of $D_n$ that may be embedded into the exceptional Lie algebra $E_{n+1}$, $n=5, 6$, and $7$.  We then classify  these embeddings, up to inner automorphism.   As an application, we also consider the restrictions of irreducible representations of $E_{n+1}$ to $D_n \inplus V$, and discuss which of these restrictions are or are not indecomposable.
\end{abstract}

\maketitle

\section{Introduction}
 
Let $V$ be a finite-dimensional representation of the orthogonal Lie algebra $D_n$.  Then, we may define an abelian extension of $D_n$ by $D_n \inplus V$:  The representation action of $D_n$ on $V$  defines a multiplication between  elements of $D_n$ and $V$, and $V$ is regarded as an abelian algebra:  $[V,V]=0$.  
 
 In this article, we determine all abelian extensions of $D_n$ that may be embedded into the exceptional Lie algebra $E_{n+1}$, $n=5, 6$, and $7$ (Section \ref{sec6}).  We then classify these embeddings, up to inner automorphism (Section \ref{sec7}).   As an application, we also consider the restrictions to $D_n \inplus V$ of irreducible representations of $E_{n+1}$, for each abelian extension and each embedding (Section \ref{sec8}).  

We begin in Section \ref{sec2} with motivation.  Section \ref{sec3} records relevant information about the objects of study in this article:  $D_n$ and $E_{n+1}$, $n=5, 6$, $7$.  Section \ref{sec4} presents pertinent notation and terminology.  In Section \ref{sec5} we describe the classification of  embeddings of $D_n$ into $E_{n+1}$, up to inner automorphism, which will be employed in the following sections.

\section{Motivation}\label{sec2}

The examination of embeddings of semisimple Lie algebras into complex semisimple Lie algebras is an interesting, important, and widely studied problem.  The foundational work of Dynkin \cite{dynkin} is perhaps the most widely cited.  Dynkin's work \cite{dynkin}, together with a recent article by Minchenko \cite{min}, classifies the semisimple subalgebras 
of the exceptional Lie algebras, up to inner automorphism.  From a classification of subalgebras, one may obtain a classification of embeddings in a straightforward manner.  In particular, a subalgebra $\la'$ of $\la$ naturally corresponds to one embedding, and may (or may not) correspond to more inequivalent embeddings if $\la'$ has outer automorphisms.

The current paper extends previous research by the likes of Dynkin and Minchenko by considering embeddings of {\it nonsemisimple} Lie algebras into  exceptional Lie algebras.  It also extends and generalizes  previous work of Douglas and Repka, which examined specific important examples of embeddings of nonsemisimple Lie algebras into classical Lie algebras.

For instance, in \cite{dr3}, the authors classified the embeddings of
the Euclidean algebra $\mathfrak{e}(3)\cong \mathfrak{so}(3,\mathbb{C}) \inplus \mathbb{C}^3$ into $\mathfrak{so}(5,\mathbb{C})$, up to inner automorphism.  As another example, the authors  classified the embeddings of $\mathfrak{e}(3)$ into $\mathfrak{sl}(4,\mathbb{C})$, up to inner automorphism \cite{dr1}.

Little is known about the indecomposable representations of abelian extensions $D_n \inplus V$.  The current research promises to shed some light on the representation theory of these nonsemisimple Lie algebras.   In particular,   as an application of the above mentioned classification,  the authors will examine the irreducible representations of $E_{n+1}$ restricted to $D_n \inplus V$, for each embedding and each abelian extension.  

The authors have employed this technique in previous research.  In \cite{dr3}, for instance,  Douglas and Repka showed that  each irreducible representation of $B_2$ remains indecomposable upon restriction to $\mathfrak{e}(3)$ $\cong$ $\mathfrak{so}(3,\mathbb{C}) \inplus \mathbb{C}^3$ with respect to any embedding of $\mathfrak{e}(3)$ into $B_2$,  hence creating a large family of indecomposable $\e$-representations.   In \cite{dr1} and \cite{dr2}   Douglas and Repka examined the irreducible representations of $A_3$ restricted to $\mathfrak{e}(3)$ with respect to any embedding;  these $\mathfrak{e}(3)$-representations were generally not indecomposable.

\section{The Lie algebras $D_n$ and $E_{n+1}$}\label{sec3}

The special orthogonal algebra $D_n$ is the Lie algebra of complex $2n\times 2n$ matrices $N$ satisfying $N^{tr}=-N$.  The dimension of $D_n$ is $2n^2-n$ and its rank is $n$.    
The Lie group corresponding to  $D_n$ arises naturally as  the symmetry group of  a real projective space over $\mathbb{R}$.

Besides the classical Lie algebras, which include $D_n$, there are five exceptional Lie algebras, three of which are $E_6$, $E_7$, and $E_8$.  These exceptional Lie algebras  were first explicitly constructed by Cartan in his $1894$ thesis, although their existence was proposed several years earlier by Wilhelm Killing \cite{baez}.

Like $D_n$, the exceptional Lie algebras  $E_6$, $E_7$, and $E_8$  have a close connection to  the Riemannian geometry of projective spaces (for details, we refer the reader to \cite{baez}).  The algebras $E_6$, $E_7$, and $E_8$ have ranks $6$, $7$, and $8$, and dimensions $78$, $133$, and $248$, respectively.

Let $\la$ denote $D_n$, $E_6$, $E_7$, or $E_8$.  
Let $k=n, 6, 7$, or $8$ for $D_n$, $E_6$, $E_7$, or $E_8$, respectively.  We may define $\la$ by a 
set of generators $\{H_i, X_i, Y_i\}_{1\leq i \leq k}$ together with the Chevalley-Serre relations \cite{humphreys}:
\begin{equation}\label{cbd}
\begin{array}{llllllllllll}
\displaystyle [H_i,H_j]=0, &[H_i,X_j]= M^{\la}_{ji} X_j,  \\
\displaystyle [H_i,Y_j]= -M^{\la}_{ji} Y_j,& [X_i,Y_j]= \delta_{ij} H_i, \\
\displaystyle  (\text{ad} ~X_i)^{1-M^{\la}_{ji}}(X_{j}) =0,  & (\text{ad} ~Y_i)^{1-M^{\la}_{ji}}(Y_j)=0,& \text{when}~i\neq j,
\end{array}
\end{equation}
where $1 \leq i,j \leq k$, and  $M^{\la}$ is the Cartan matrix of $\la$  (see \cite{humphreys}).   The $X_i$, for $1\leq i \leq k$, correspond to the simple roots.

There are numerous calculations throughout this article involving products of elements of $E_{n+1}$, $n=5, 6, 7$.  These calculations may be carried out with the Chevalley-Serre relations \eqref{cbd}, or,  more efficiently, using the computer algebra system GAP \cite{gap}.

The Dynkin diagrams of $D_n$, $E_6$, $E_7$, and $E_8$, indicating the numbering of simple roots, are given in Figure \ref{ddd}.  For future reference, we also list in Table \ref{exb} the maximal dimension of an abelian 
subalgebra of $E_6$, $E_7$, and $E_8$ from \cite{mal}.

\begin{table} [!ht]\renewcommand{\arraystretch}{1.2} \caption{Maximal abelian subalgebras of $E_6$, $E_7$, and $E_8$. \cite{mal}} \label{exb}\begin{center}
\begin{tabular}{|c|c|c|clclcl} 
\hline
Exceptional Lie Algebra &  Maximal Dimension of Abelian Subalgebra \\
\hline \hline
$E_6$& $16$\\ 
\hline 
$E_7$  & $27$\\
\hline 
$E_8$ & $36$\\
\hline
\end{tabular}\end{center}
\end{table}

\begin{figure} [!ht]
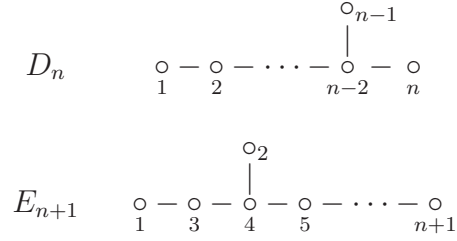
\caption{Dynkin diagrams of $D_n$, $E_{n+1}$.}\label{ddd}
 \begin{align*}\begin{array}{ccc}
 D_n  &&\underset{\mathclap{1}}{\circ} - \underset{\mathclap{2}}{\circ} - \dotsb - \underset{\mathclap{{n-2}}}{\overset{\overset{\textstyle\circ_{\mathrlap{n-1}}}{\textstyle\vert}}{\circ}} \,-\, \underset{\mathclap{{n}}}{\circ} \\\\
E_{n+1}  &&\underset{\mathclap{1}}{\circ} - \underset{\mathclap{3}}{\circ}                 - \underset{\mathclap{4}}{\overset{\overset{\textstyle\circ_{\mathrlap{2}}}{\textstyle\vert}}{\circ}}                - \underset{\mathclap{5}}{\circ} - \dotsb - \underset{\mathclap{n+1}}{\circ}  \\\\
%
\end{array}
\end{align*}
\end{figure}

We now consider the representations of $
\la = 
D_n$ 
or 
$E_{n+1}$, with   
$k$ as above.

For $i=1,...,k$, and $\h$ the Cartan subalgebra of $\la$ with basis $H_1,...,H_k$, define
$\alpha_i, ~ \lambda_i \in \h^*$ by $\alpha_i(H_j)= M^{\la}_{ji}$, and $\lambda_i(H_j)=\delta_{ij}$.  The $\lambda_i$ are the {\it fundamental weights}, and their indexing corresponds with that  of the  Dynkin diagram of type $D_n$, $E_6$, $E_7$, or $E_8$, respectively, in  Figure \ref{ddd}.

For each
$\lambda= m_1 \lambda_1+...+m_k \lambda_k  \in \h^*$, with nonnegative 
integers $m_1,...,m_k$, 
there exists a finite-dimensional,
irreducible $\la$-module 
with highest weight $\lambda$, 
denoted $V_{\la}(\lambda)$, and every finite-dimensional irreducible $\la$-module is of this form, for some $\lambda$.  The representations $V_{\la}(\lambda_i)$ for $1\leq i \leq k$ are the {\it fundamental 
representations}.  A useful formula for the dimensions of the fundamental representations of $D_n$ is as follows [\cite{hu}, Theorem $13.10$]:
\begin{equation}
\dim (V_{D_n}(\lambda_m) )=  \begin{cases} \binom{2n}{m}, & \text{if} ~~ 1 \leq m< n-1;\\
2^{n-1}, & \text{if} ~~ m= n-1,n.
\end{cases}
\end{equation}
The representation $V_{\la}(\lambda)$ is realized as the quotient of $\mathcal{U}(\la)$ by the left ideal, $J_{\lambda}$,
generated by $X_i, H_i- \lambda(H_i)$, $Y_{i}^{1+\lambda(H_i)}$, $1\leq i \leq k$ (here the action of $\mathcal{U}(\la)$ on itself and on 
$V_{\la}(\lambda)$ is given by left multiplication).  We will denote the element $1+J_\lambda$ of  the quotient $V_{\la}(\lambda)$ by 
$\widetilde{u}$.  Then one can show that $V_{\la}(\lambda)$ is generated by $\{ Y_{i_1} \cdots Y_{i_l} \widetilde{u}: l \in \mathbb{N}_0, i_1,...,
i_l \in \{1,...,k\} \}$.  The weight of   $Y_{i_1} \cdots Y_{i_l} \widetilde{u}$ is $\lambda- \Sigma_{j=1}^l \alpha_{i_j}$.

\section{Additional definitions and notation}\label{sec4}

The following definitions and notation will be used in this article.  In the definitions, let $\la$ and $\mathfrak{g}'$ be simple Lie algebras.

\begin{itemize}
\item Let $X_{a_i}$ correspond to a simple root of $E_{n+1}$, for $ 1\leq a_i \leq n+1$, where $n=5, 6$, or $7$.  We then define
\[
X_{a_1,a_2,a_3,...,a_k} \equiv [[...[[X_{a_{1}},X_{a_{2}}],X_{a_{3}}],...],X_{a_{k}}].
\]
$Y_{a_1,a_2,a_3,...,a_k}$ is defined analogously.

\item   Let $\varphi:  D_{n} \hookrightarrow E_{n+1}$ be an embedding, and let $W \in E_{n+1}$.     Then, we define $[W]_{\varphi(D_n)}$ to be the $D_n$-representation generated  by $W$ with respect to the adjoint action of $\varphi(D_n)$.  

\item Let  $W_1$,..., $W_m$ $\in E_{n+1}$, for $n=5, 6$, or $7$. Then, 
$\langle W_1,..., W_m \rangle_{E_{n+1}}$
is the subalgebra of $E_{n+1}$ generated by $W_1$,...,$W_m$.

\item Let $V$ be a finite-dimensional representation of $D_n$.
Then, a {\it lift} of the embedding $\varphi: D_n \hookrightarrow E_{n+1}$ to $D_n \inplus V$ is an embedding $\widetilde{\varphi}^V: D_n \inplus V\hookrightarrow E_{n+1}$  such that $\widetilde{\varphi}^V$ restricted to $D_n$ is equal to $\varphi$.  That is,   $\widetilde{\varphi}^V |_{D_n}= \varphi$.

\item Let $\varphi$ and $\varrho$ be Lie algebra embeddings of $\la'$ into $\mathfrak{g}$.  Then $\varphi$ and $\varrho$ are {\it equivalent} if there is an inner automorphism $\rho: \mathfrak{g} \rightarrow \mathfrak{g}$ such that $\varphi= \rho \circ 
\varrho$, and we write
\[
\varphi \sim \varrho.
\]
Hence, our classification in this article is up to equivalence.
\item Two embeddings $\varphi$ and $\varrho$  of $\la'$ into $\la_{}$ are \emph{linearly equivalent} if for any finite-dimensional  representation $V$ of $\la$,  $V|_{\varphi(\mathfrak{\la'})}$ and $V|_{\varrho(\mathfrak{\la'})}$ are isomorphic representations of $\la'$, and we write
\[
\varphi \sim_L \varrho.
\]
Clearly equivalence implies linear equivalence, but the converse is not in general true.   We define equivalence  of subalgebras much as we did for embeddings.
\item Two subalgebras $\la'$ and $\la''$ of $\la$ are equivalent if there is an inner automorphism $\rho$ or $\la$
such that $\rho(\la')=\la''$.

\item A subalgebra of $\la$ is {\it regular} if it is normalized by a Cartan subalgebra of $\la$.  
\end{itemize}

 \section{Embeddings of $D_{n}$ into $E_{n+1}$}\label{sec5}

In this section we describe the classification of  embeddings of $D_n$ into $E_{n+1}$, up to equivalence, which is given
in Theorem \ref{dymin}.  We begin by constructing two embeddings of $D_n$ into $E_{n+1}$.

With respect to the generators defined in Section \ref{sec3}, we may ``naturally" embed $D_{n}$ into $E_{n+1}$ for $n=5, 6,$ or $7$ as follows:
\begin{equation}
\begin{array}{llll}\label{emb}
\varphi_n: &D_n &\hookrightarrow& E_{n+1} \\
& H_{n+1-i} & \mapsto & H_{i+1}\\
& X_{n+1-i} & \mapsto & X_{i+1}\\
& Y_{n+1-i} & \mapsto & Y_{i+1},
\end{array}
\end{equation}
where $1\leq i\leq n$.

Each of these embeddings may be  visualized as arising from a ``natural" subgraph of the Dynkin diagram of $E_{n+1}$, for $n=5,6,$ or $7$, which is isomorphic to the Dynkin diagram of $D_n$  (see Figure \ref{ddd}).  

Consider the transposition $\sigma =(n-1,n)$  of the symmetric group $\mathcal{S}_n$, and define an outer automorphism of $D_n$ as follows:
\begin{equation}
\begin{array}{llll}\label{emb2}
\rho_n: &D_n &\rightarrow& D_{n} \\
& H_{i} & \mapsto & H_{\sigma(i)}\\
& X_{i} & \mapsto & X_{\sigma(i)}\\
& Y_{i} & \mapsto & Y_{\sigma(i)},\\
\end{array}
\end{equation}
where $1\leq i\leq n$.    The outer automorphisms of $D_{n}$ correspond to the automorphisms of its Dynkin diagram [\cite{fh}, Proposition $D. 40$], so that $\rho_n$ is the only outer automorphism of $D_{n}$, $n>4$,  up to inner automorphism.  

Using the above outer automorphism of $D_n$, define a second embedding of $D_n$ into $E_{n+1}$:
\begin{equation}
\varrho_n \equiv \varphi_n \circ \rho_n :  D_n \hookrightarrow E_{n+1}.
\end{equation}

The two embeddings $\varphi_n$ and $\varrho_n$ are the basis of the classification of embeddings of $D_n$ into $E_{n+1}$.  The classification is  a consequence of the following three theorems.

\begin{theorem}\cite{dynkin, min} \label{dymina}
There is a unique $D_n$ subalgebra inside $E_{n+1}$, up to equivalence, for $n=5,6$, and $7$.  Further, in each case, the subalgebra is regular.
\end{theorem}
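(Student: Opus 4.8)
My plan is to split the statement into (a) existence together with regularity and (b) uniqueness, and to obtain both by feeding the specific numerology of $D_n \subset E_{n+1}$ into Dynkin's structure theory of semisimple subalgebras \cite{dynkin, min}. For (a) I would simply exhibit the subalgebra. Deleting node $1$ from the Dynkin diagram of $E_{n+1}$ in Figure~\ref{ddd} leaves the simple roots indexed by $\{2,3,\dots,n+1\}$, and the induced subdiagram is of type $D_n$ (node $4$ is the trivalent node and $\{2,3\}$ is the fork). The span of $H_2,\dots,H_{n+1}$ together with the root vectors of the closed subsystem they generate is a semisimple subalgebra isomorphic to $D_n$; this is precisely the image $\varphi_n(D_n)$ of the natural embedding \eqref{emb}. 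Since it is spanned by Cartan elements and $E_{n+1}$-root vectors, it is normalized by the Cartan subalgebra $\h$, hence regular by definition. This produces a regular representative in each of the three cases.

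For (b) I would invoke Dynkin's dichotomy: up to conjugacy, every semisimple subalgebra of $E_{n+1}$ arises from a chain of inclusions, each of which is either a regular inclusion or an $S$-subalgebra inclusion. The decisive observation is a rank bound. In each case every maximal $S$-subalgebra of $E_{n+1}$ has rank at most $n-1$ (for $E_6$ the maximal $S$-subalgebras $A_2,G_2,C_4,F_4,A_2\times G_2$ have rank $\le 4$; for $E_7$ rank $\le 5$; for $E_8$ rank $\le 6$), whereas $\operatorname{rank}(D_n)=n$. Because a simple subalgebra cannot have rank exceeding that of any subalgebra containing it, a subalgebra isomorphic to $D_n$ can be neither an $S$-subalgebra of $E_{n+1}$ nor contained in one. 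Propagating the same rank and dimension constraints through the proper regular reductive subalgebras — whose simple summands are classical or exceptional of strictly smaller rank, so that $D_n$ can at worst reappear as a Levi factor — forces $D_n$ to arise through regular inclusions at every stage. Hence any subalgebra isomorphic to $D_n$ is regular, i.e.\ its root system is a closed subsystem of type $D_n$ inside the root system of $E_{n+1}$.

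It then remains to classify the closed subsystems of type $D_n$ up to the action of the Weyl group $W(E_{n+1})$, and to check that they form a single orbit containing the subsystem of the first paragraph. I would carry this out by Dynkin's subsystem algorithm, iterating the Borel--de Siebenthal construction (deletion of nodes from the extended Dynkin diagram) together with Levi-type deletions, and then sorting the resulting type-$D_n$ subsystems modulo $W(E_{n+1})$. For $E_6$ there is the extra feature that the diagram automorphism produces a second Levi presentation of $D_5$ (the simple roots $\{1,\dots,5\}$); this is reconciled with the first by the longest element $w_0\in W(E_6)$, which realizes the diagram automorphism as $-w_0$ and so conjugates one presentation to the other. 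The same single subalgebra supports both embeddings $\varphi_n$ and $\varrho_n$: the outer automorphism $\rho_n$ merely swaps the two fork nodes $\{2,3\}$, both of which already lie in the subsystem, so $\varphi_n$ and $\varrho_n$ have identical image.

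The main obstacle is exactly this last combinatorial step — verifying that there is a \emph{unique} $W(E_{n+1})$-orbit of $D_n$-subsystems. The enumeration is routine for $E_6\supset D_5$, where the only maximal-rank subsystems ($A_1\times A_5$, $A_2\times A_2\times A_2$) contain no $D_5$, leaving only the Levi subsystem; it is heavier for $E_7\supset D_6$ (where $D_6$ also surfaces as a factor of the Borel--de Siebenthal subsystem $A_1\times D_6$) and heaviest for $E_8\supset D_7$, whose root system is largest and where $D_7$ appears as a Levi factor of $D_8$. Confirming that all of these occurrences coincide up to $W(E_{n+1})$ is the substantive content supplied by \cite{dynkin, min}.
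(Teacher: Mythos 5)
The paper never proves this theorem: it is imported verbatim from \cite{dynkin} and \cite{min}, so there is no internal argument to compare you against. Judged on its own terms, your sketch is a correct reconstruction in outline of what those references supply. Your existence-and-regularity step matches the paper's setup exactly: the Levi subalgebra on nodes $\{2,\dots,n+1\}$ of Figure~\ref{ddd} is precisely the image of the natural embedding $\varphi_n$ of Eq.~\eqref{emb}, and normalization by the Cartan subalgebra is regularity in the sense defined in Section~\ref{sec4}. Your rank bound against Dynkin's lists of maximal $S$-subalgebras (which you quote correctly for $E_6$, $E_7$, $E_8$) does rule out $D_n$ being, or lying inside, an $S$-subalgebra of $E_{n+1}$. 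And your closing observation that $\varphi_n$ and $\varrho_n$ have the \emph{same} image --- $\rho_n$ permutes the fork nodes, both already in the subdiagram --- is exactly what reconciles this theorem (one subalgebra) with Theorem~\ref{dymin} (two inequivalent embeddings for $n=5,6$).

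Two soft spots, neither fatal since you, like the paper, ultimately defer the case analysis to \cite{dynkin, min}. First, your claim that the simple summands of proper regular reductive subalgebras have ``strictly smaller rank'' is false: Borel--de Siebenthal subalgebras have full rank, and $A_8, D_8 \subset E_8$ and $A_7 \subset E_7$ have simple summands of rank equal to that of the ambient algebra, so the propagation step must genuinely handle occurrences such as $D_7 \subset D_8$ --- as your final paragraph in fact does. Second, rank alone does not force regularity at every stage of a chain, because $S$-subalgebras can have corank one: e.g.\ $\mathfrak{so}(15) \subset \mathfrak{so}(16) = D_8$, giving a chain $D_7 \subset \mathfrak{so}(15) \subset D_8 \subset E_8$ in which two consecutive inclusions are non-regular. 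One must check concretely that such chains yield nothing new (here the $D_7$ stabilizes a nondegenerate $14$-dimensional subspace, hence lies in the regular $D_7 \times D_1$ after all); that verification, together with the single-Weyl-orbit count of $D_n$-subsystems, is exactly the ``routine but heavy'' content you rightly attribute to the cited sources.
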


\begin{theorem}[\cite{min}, Theorem $2$] \label{dymin2}
Let $\varphi$ and $\varrho$ be embeddings of $D_n$ into $E_{n+1}$, for $n=5,6$, and $7$, then
 \begin{equation}\label{vccd}
 \varphi \sim_L \varrho \Leftrightarrow V_{E_{n+1}}(\lambda_{n+1})|_{\varphi(D_n)}\cong V_{E_{n+1}}(\lambda_{n+1})|_{\varrho(D_n)}.
 \end{equation}
\end{theorem}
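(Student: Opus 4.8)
The plan is to prove the two directions separately: the forward implication is immediate, and the reverse implication carries all the weight. For the forward direction, if $\varphi \sim_L \varrho$ then by the definition of linear equivalence $V|_{\varphi(D_n)} \cong V|_{\varrho(D_n)}$ for \emph{every} finite-dimensional representation $V$ of $E_{n+1}$, and specializing to $V = V_{E_{n+1}}(\lambda_{n+1})$ gives \eqref{vccd}. So everything reduces to showing that matching on this one representation already forces matching on all of them.

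For the reverse direction, the first step is to cut the problem down to a comparison of just two embeddings. By Theorem \ref{dymina} the images $\varphi(D_n)$ and $\varrho(D_n)$ are conjugate under an inner automorphism of $E_{n+1}$, and since such an automorphism preserves both the hypothesis \eqref{vccd} and the conclusion $\varphi \sim_L \varrho$, I may assume both images coincide in a fixed copy $\mathfrak{d} \cong D_n$. Then $\varrho = \varphi \circ \theta$ for some $\theta \in \mathrm{Aut}(D_n)$. Inner automorphisms of $\mathfrak{d}$ are restrictions of inner automorphisms of $E_{n+1}$ preserving $\mathfrak{d}$, so they do not alter the linear-equivalence class; and since $\mathrm{Out}(D_n) = \mathbb{Z}/2\mathbb{Z}$ is generated by the diagram automorphism $\rho_n$ of \eqref{emb2} for $n > 4$, I am reduced to the single nontrivial case $\varrho = \varphi \circ \rho_n$, i.e.\ to proving
\[
V_{E_{n+1}}(\lambda_{n+1})|_{\varphi(D_n)} \cong V_{E_{n+1}}(\lambda_{n+1})|_{\varphi\circ\rho_n(D_n)} \;\Longrightarrow\; \varphi \sim_L \varphi\circ\rho_n .
\]

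The key observation is the action of $\rho_n$ on representations: precomposition by $\rho_n$ induces the involution of the representation ring of $D_n$ that interchanges the two half-spin fundamental representations $V_{D_n}(\lambda_{n-1}) \leftrightarrow V_{D_n}(\lambda_n)$ and fixes all other fundamental representations. Hence $V|_{\varphi\circ\rho_n(D_n)}$ is obtained from $V|_{\varphi(D_n)}$ by applying this half-spin swap to each $D_n$-constituent, and the whole matter turns on one question: \emph{is the branching of $V_{E_{n+1}}(\lambda_{n+1})$ to $\mathfrak{d}$ invariant under interchange of the two half-spins?} I would compute this branching explicitly, either by restricting weights through the embedding $\varphi_n$ of \eqref{emb} or with GAP. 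For $E_6$ and $E_7$ the representation $V(\lambda_{n+1})$ is minuscule and its restriction contains exactly one of the two half-spins (schematically $27 \to 16 + 10 + 1$ for $D_5$ and $56 \to 32 + 12 + 12$ for $D_6$); the swap then genuinely changes the restriction, so \eqref{vccd} fails for $\varrho = \varphi\circ\rho_n$, forcing $\theta$ to be inner and hence $\varphi \sim_L \varrho$. For $E_8$, where $V(\lambda_8)$ is the adjoint and the two half-spins of $D_7$ occur with equal multiplicity, the restriction is automatically $\rho_7$-invariant, and there I would instead verify $\varphi \sim_L \varphi\circ\rho_7$ directly.

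To see why a \emph{single} representation is decisive, I would invoke the standard character criterion: after fixing compatible maximal tori, $\varphi \sim_L \varrho$ holds if and only if the induced maps on weight lattices agree up to the Weyl group $W(D_n)$. Because $V_{E_{n+1}}(\lambda_{n+1})$ is a nontrivial, hence faithful, representation of the simple algebra $E_{n+1}$, its weights span $\h^*$, so the weight-restriction map is determined by its values on these weights; together with the reduction above to the two candidate classes $\{[\varphi],[\varphi\circ\rho_n]\}$, agreement on $V(\lambda_{n+1})$ decides which class occurs. The main obstacle is therefore the explicit branching computation and the accompanying half-spin symmetry analysis (and, in the $E_8$ case, upgrading ``one representation matches'' to ``all representations match''); the structural input from Theorem \ref{dymina}, the order-two outer automorphism group, and the character criterion is exactly what makes this finite computation conclusive.
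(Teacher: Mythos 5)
The paper never proves this statement: it is quoted verbatim from [\cite{min}, Theorem 2] (and attributed there to Losev), so your attempt has to stand on its own rather than be matched against an internal argument. Most of it does stand. The forward direction is indeed immediate; the reduction via Theorem \ref{dymina} to the two classes $[\varphi]$ and $[\varphi\circ\rho_n]$ is legitimate (inner automorphisms of $E_{n+1}$, and precomposition by inner automorphisms of $D_n$, change neither side of \eqref{vccd}); and your branching data for $E_6$ and $E_7$ agree with the paper's Eqs.~\eqref{wq1} and \eqref{wq2}, so in those two cases the hypothesis genuinely excludes the outer twist and forces $\varphi\sim_L\varrho$.

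The genuine gap is the $E_8$ case, and it is exactly where the theorem's content lives. Since the restriction \eqref{wq3} is invariant under the half-spin swap, the hypothesis is automatically satisfied by $\varrho=\varphi\circ\rho_7$, and the reverse implication reduces to proving $\varphi\sim_L\varphi\circ\rho_7$ --- which you only promise to ``verify directly.'' Two problems: first, you cannot import the paper's $\varphi_7\sim\varrho_7$ (Eq.~\eqref{w3}), because the paper deduces it \emph{from} the theorem you are proving, so that route is circular. Second, your fallback criterion is misstated and would not finish the job anyway: linear equivalence corresponds to conjugacy of the Cartan restriction maps under the \emph{ambient} Weyl group $W(E_8)$, not under $W(D_n)$; and knowing that the weights of the single representation $V_{E_8}(\lambda_8)$ restrict to equal multisets does not determine the restriction map ``by spanning,'' since no matching of individual weights is supplied --- this is precisely why one representation sufficing is a theorem of Losev/Minchenko and why Dynkin needed two fundamental representations for $E_7$ and $E_8$, as the paper remarks after the statement. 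A concrete way to close the gap: $\varphi_7(D_7)$ is regular, and $-1\in W(E_8)$; the element $-\mathrm{id}$ of $\mathfrak{h}^*$ preserves the $D_7$ subsystem, and since $-1\notin W(D_7)$ (odd rank), it induces the diagram flip modulo $W(D_7)$. Lifting this Weyl element to an inner automorphism of $E_8$ normalizing $\varphi_7(D_7)$ yields $\varphi_7\sim\varphi_7\circ\rho_7$, hence $\varphi_7\sim_L\varphi_7\circ\rho_7$, completing your case analysis. (The same computation explains the asymmetry you observed: $-1\notin W(E_6)$, while in $E_7$ one has $-1\in W(D_6)$ already, so no flip is produced there.)
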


Interestingly, Theorem \ref{dymin2}, from \cite{min}, is stronger than a corresponding result found in Dynkin's classic paper [\cite{dynkin}, Theorem $1.3$].  In \cite{dynkin}, one needs to consider $V_{E_{n+1}}(\lambda_{n+1})$, and $V_{E_{n+1}}(\lambda_{1})$, for $n=6, 7$, to establish linear equivalence.  Note that Theorem \ref{dymin2}, although found in \cite{min}, is attributed to Losev \cite{losev}.
\begin{theorem}[\cite{min}, Theorem $3$] \label{dymin2b}
Let  $\varphi$ and $\varrho$ be embeddings of the simple Lie algebra $\la'$ into  
the 
simple Lie algebra $\mathfrak{g}$ such that $\varphi(\mathfrak{\la'})$ and $\varrho(\mathfrak{\la'})$ are regular subalgebras.  Then
 \begin{equation}\label{vcc}
 \varphi \sim_L \varrho \Leftrightarrow  \varphi \sim \varrho.
 \end{equation}
\end{theorem}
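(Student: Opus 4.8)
The plan is to prove the nontrivial implication $\varphi \sim_L \varrho \Rightarrow \varphi \sim \varrho$, since the reverse was already observed: an inner automorphism of $\mathfrak{g}$ preserves the isomorphism class of every finite-dimensional $\mathfrak{g}$-module, so $\varphi = \rho \circ \varrho$ with $\rho$ inner immediately gives $V|_{\varphi(\la')} \cong V|_{\varrho(\la')}$ for all $V$.

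First I would exploit regularity together with the conjugacy of Cartan subalgebras. Since $\varphi(\la')$ and $\varrho(\la')$ are regular, each is normalized by a Cartan subalgebra of $\mathfrak{g}$; because all Cartan subalgebras of $\mathfrak{g}$ are conjugate under $\mathrm{Inn}(\mathfrak{g})$, I may replace $\varrho$ by an equivalent embedding so that both images are normalized by one fixed Cartan subalgebra $\h$ of $\mathfrak{g}$, and, conjugating inside each subalgebra, so that a fixed Cartan subalgebra $\h'$ of $\la'$ maps into $\h$ under both embeddings. This replacement changes neither the equivalence class nor the linear-equivalence class, and reduces the problem to the root system $\Phi = \Phi(\mathfrak{g},\h)$: each embedding now determines a closed subsystem $\Phi_\varphi, \Phi_\varrho \subseteq \Phi$ together with a restriction map $\varphi^*, \varrho^* : \h^* \to (\h')^*$ on weights.

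The central step is to translate linear equivalence into Weyl-group data. The isomorphism class of the semisimple $\la'$-module $V|_{\varphi(\la')}$ is determined by its formal character, that is, by the multiset $\{\varphi^*(\mu)\}$ as $\mu$ ranges over the weights of $V$ with multiplicity, taken up to $W(\la')$. Thus $\varphi \sim_L \varrho$ forces these multisets to coincide for every $V$. Since the characters of the $\mathfrak{g}$-modules $V$ jointly separate the $W(\mathfrak{g})$-orbits on the weight lattice of $\h$, I expect to deduce that $\varphi^*$ and $\varrho^*$, and hence the closed subsystems $\Phi_\varphi$ and $\Phi_\varrho$, are conjugate by some $w \in W(\mathfrak{g})$. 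Every element of $W(\mathfrak{g})$ is realized by an inner automorphism of $\mathfrak{g}$ (it is represented in $N_{\mathrm{Inn}(\mathfrak{g})}(\h)/\h$), so this $w$ lifts to an inner automorphism carrying $\varrho(\la')$ onto $\varphi(\la')$ as subalgebras.

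It remains to upgrade conjugacy of the images to equivalence of the embeddings. After applying the inner automorphism above, $\varphi$ and the conjugated $\varrho$ share the same image and therefore differ by an automorphism $\tau$ of $\la'$. If $\tau$ is inner in $\la'$ it is realized by $\exp(\mathrm{ad}\,x)$ with $x \in \varphi(\la')$, which is inner in $\mathfrak{g}$ and fixes the subalgebra, finishing the argument. The delicate case --- and the step I expect to be the main obstacle --- is when $\tau$ is outer: here linear equivalence says precisely that the twisted module $(V|_{\varphi(\la')}) \circ \tau$ is isomorphic to $V|_{\varphi(\la')}$ for all $V$, and one must show that an outer automorphism of $\la'$ with this self-duality property is induced by an inner automorphism of $\mathfrak{g}$ normalizing $\varphi(\la')$. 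This is exactly where the regularity hypothesis and the supply of inner automorphisms of $\mathfrak{g}$ normalizing a regular subalgebra must be brought to bear; disentangling the representation-theoretic invariant, which sees only the $W(\mathfrak{g})$-orbit, from the genuine conjugacy class of the embedding is the crux of the proof.
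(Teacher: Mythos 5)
This statement is quoted by the paper as an external result (\cite{min}, Theorem $3$, attributed there to Minchenko); the paper contains no proof of it, so your attempt cannot be matched against an internal argument and has to stand on its own. As it stands, it does not. The easy direction and the initial reduction are fine: conjugating so that both images are normalized by one fixed Cartan subalgebra $\mathfrak{h}$ of $\mathfrak{g}$, with a fixed Cartan $\mathfrak{h}'$ of $\mathfrak{g}'$ landing in $\mathfrak{h}$, is legitimate and uses regularity correctly. But the central step is asserted, not proved. From linear equivalence one can extract that $\varphi(h)$ and $\varrho(h)$ lie in the same $W(\mathfrak{g})$-orbit for each $h\in\mathfrak{h}'$, and a single $w\in W(\mathfrak{g})$ can then be found because the closed subspaces $\{h\in\mathfrak{h}':\varrho(h)=w\varphi(h)\}$, $w\in W(\mathfrak{g})$, cover the irreducible space $\mathfrak{h}'$ --- but you only say you ``expect to deduce'' this, and, more seriously, even granting it you then claim that conjugating the Cartan restriction maps carries $\varrho(\mathfrak{g}')$ onto $\varphi(\mathfrak{g}')$ ``as subalgebras.'' That does not follow: a regular subalgebra is the span of part of $\mathfrak{h}$ together with full root spaces $\mathfrak{g}_\alpha$ for $\alpha$ in a closed subsystem $\Delta\subseteq\Phi$, and distinct closed subsystems can induce the same restriction data on $\mathfrak{h}'$ (the subsystem is not in general recoverable from the coroot span alone). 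Showing that the two subsystems $\Phi_\varphi$ and $\Phi_\varrho$ actually coincide, or are conjugate by an element stabilizing the Cartan datum, is substantive work that your sketch skips entirely; this is where the regularity hypothesis earns its keep, since linear equivalence does \emph{not} imply equivalence for general (non-regular) embeddings.

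The second gap you concede yourself: the outer-automorphism case, where after matching images the two embeddings differ by an outer $\tau\in\mathrm{Aut}(\mathfrak{g}')$, is labelled ``the crux of the proof'' and then left open. This is not a dispensable formality --- it is precisely the content the paper extracts from the theorem. In the proof of Theorem \ref{dymin}, $D_n$ for $n>4$ has exactly one outer automorphism $\rho_n$, and the theorem is invoked to conclude $\varphi_7\sim\varrho_7$ from $\varphi_7\sim_L\varrho_7$, i.e.\ to decide that the outer twist \emph{is} realized by an inner automorphism of $E_8$, whereas for $n=5,6$ the twisted embeddings are inequivalent (detected there by linear inequivalence, Eqs.\ \eqref{wq1}--\eqref{w2}). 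So whether an outer automorphism of the subalgebra extends to an inner automorphism of the ambient algebra is genuinely case-dependent, and any proof of the statement must resolve exactly the question your sketch stops at. In short: correct skeleton and correct identification of the difficulties, but the two decisive steps --- rigidity of a regular subalgebra given its Cartan restriction map, and the realization of the residual outer twist by an inner automorphism of $\mathfrak{g}$ --- are missing, so this is an outline rather than a proof.
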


\begin{theorem}\label{dymin}
The  embedding $\varphi_7$ is the unique embedding of $D_7$ into $E_{8}$, up to equivalence.
 There are exactly two embeddings of  $D_5$ into $E_{6}$ and of $D_6$ into $E_{7}$, up to equivalence: $\varphi_n$ and $\varrho_n$, for $n=5$ or $6$, respectively.
\end{theorem}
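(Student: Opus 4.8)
The plan is to first reduce the problem to deciding whether the two explicit embeddings $\varphi_n$ and $\varrho_n$ are equivalent, and then to settle that question representation-theoretically. By Theorem \ref{dymina} every $D_n$ subalgebra of $E_{n+1}$ is equivalent to $\varphi_n(D_n)$, so given an arbitrary embedding $\varphi\colon D_n\hookrightarrow E_{n+1}$ I would first pick an inner automorphism of $E_{n+1}$ carrying $\varphi(D_n)$ onto $\varphi_n(D_n)$; after composing with it, $\varphi_n^{-1}\circ\varphi$ becomes an automorphism $\tau$ of $D_n$, so $\varphi\sim\varphi_n\circ\tau$. Since $n>4$, the remark following \eqref{emb2} gives $\mathrm{Aut}(D_n)=\mathrm{Inn}(D_n)\cdot\langle\rho_n\rangle$. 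Because $\varphi_n$ is a homomorphism, it intertwines any inner automorphism of $D_n$ with an inner automorphism of $E_{n+1}$; hence $\varphi_n\circ\tau\sim\varphi_n$ when $\tau$ is inner and $\varphi_n\circ\tau\sim\varphi_n\circ\rho_n=\varrho_n$ when $\tau$ is outer. Thus every embedding is equivalent to $\varphi_n$ or to $\varrho_n$, and the theorem reduces to deciding, for each of $n=5,6,7$, whether $\varphi_n\sim\varrho_n$.

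Next I would convert this into a single restriction problem. Both images coincide and are regular, since $\varrho_n(D_n)=\varphi_n(\rho_n(D_n))=\varphi_n(D_n)$ and this subalgebra is regular by Theorem \ref{dymina}. Hence Theorem \ref{dymin2b} gives $\varphi_n\sim\varrho_n\iff\varphi_n\sim_L\varrho_n$, and Theorem \ref{dymin2} in turn reduces linear equivalence to the single isomorphism $V_{E_{n+1}}(\lambda_{n+1})|_{\varphi_n(D_n)}\cong V_{E_{n+1}}(\lambda_{n+1})|_{\varrho_n(D_n)}$. The key observation is that, as $D_n$-modules, the $\varrho_n$-restriction is the $\rho_n$-twist of the $\varphi_n$-restriction: if $V_{E_{n+1}}(\lambda_{n+1})|_{\varphi_n(D_n)}=\bigoplus_\mu V_{D_n}(\mu)^{\oplus m_\mu}$, then passing to $\varrho_n$ interchanges the coefficients of $\lambda_{n-1}$ and $\lambda_n$ in every $\mu$. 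Therefore $\varphi_n\sim\varrho_n$ exactly when this branching is invariant under swapping the two half-spin nodes.

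Finally I would analyse the three branchings, using that for $D_n$ with $n$ odd the swap of the spinor nodes coincides with the action of $-w_0$, i.e.\ with dualization, so that $V_{E_{n+1}}(\lambda_{n+1})|_{\varrho_n(D_n)}\cong\bigl(V_{E_{n+1}}(\lambda_{n+1})|_{\varphi_n(D_n)}\bigr)^{*}$. For $n=7$ this is decisive: every finite-dimensional $E_8$-representation, and in particular $V_{E_8}(\lambda_8)$, is self-dual, so its restriction is self-dual, the two restrictions agree, and $\varphi_7\sim\varrho_7$, giving a unique embedding. For $n=5$ the same twist-equals-dual identity holds, but I expect the branching $V_{E_6}(\lambda_6)|_{D_5}=V_{D_5}(\lambda_1)\oplus V_{D_5}(\lambda_5)\oplus V_{D_5}(0)$ to contain exactly one of the two half-spins, so it is not self-dual and $\varphi_5\not\sim\varrho_5$. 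For $n=6$ the twist is the genuine half-spin swap rather than dualization, and I expect $V_{E_7}(\lambda_7)|_{D_6}=V_{D_6}(\lambda_6)\oplus V_{D_6}(\lambda_1)^{\oplus 2}$, again containing only one of the inequivalent half-spins, so the swap yields a non-isomorphic module and $\varphi_6\not\sim\varrho_6$. The hard part will be this last step: one must determine the branchings precisely enough to certify that a half-spin representation occurs with multiplicity differing from that of its partner. This can be done by restricting through the regular subalgebras obtained from the appropriate node deletions (yielding $D_5$ in $E_6$, $A_1\oplus D_6$ in $E_7$, and $D_7$ in $E_8$), or directly from the relations \eqref{cbd} with the aid of GAP; the self-duality of $E_8$-representations is what lets the $n=7$ case bypass an explicit weight computation, whereas $n=5,6$ require the honest spinor-content verification.
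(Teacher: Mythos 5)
Your proposal is correct and follows the same overall architecture as the paper's proof: reduce, via the uniqueness of the $D_n$ subalgebra (Theorem \ref{dymina}) and the fact that $\rho_n$ is the unique outer automorphism of $D_n$ for $n>4$, to deciding whether $\varphi_n\sim\varrho_n$; then settle that by comparing $V_{E_{n+1}}(\lambda_{n+1})|_{\varphi_n(D_n)}$ with $V_{E_{n+1}}(\lambda_{n+1})|_{\varrho_n(D_n)}$ via Theorems \ref{dymin2} and \ref{dymin2b}, using that both images equal the same regular subalgebra. The branchings you ``expect'' for $n=5,6$ (and correctly flag as the step requiring honest verification) are exactly the paper's Eqs. \eqref{wq1} and \eqref{wq2}, and each contains only one of the two half-spin representations, so your inequivalence arguments go through. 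The one genuine difference is the $n=7$ case: the paper computes the full $248$-dimensional branching explicitly (Eq. \eqref{wq3}) and observes it is identical for both embeddings, whereas you bypass the computation by noting that twisting by $\rho_7$ coincides with dualization (since $-w_0$ acts as the spin-node swap on $D_n$ for $n$ odd) and that $V_{E_8}(\lambda_8)$, like every $E_8$-module, is self-dual. This is a cleaner conceptual argument, and it also explains \emph{why} the paper's Eq. \eqref{wq3} comes out swap-invariant: both half-spins $\lambda_6$ and $\lambda_7$ must occur with equal multiplicity. Your version of the reduction step (conjugating an arbitrary embedding onto $\varphi_n(D_n)$, factoring the resulting automorphism of $D_n$ into an inner part and a power of $\rho_n$, and noting that $\varphi_n$ intertwines inner automorphisms of $D_n$ with inner automorphisms of $E_{n+1}$) is also somewhat more explicit than the paper's one-sentence rendering of the same argument, which is a minor gain in rigor rather than a different route.
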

\begin{proof}
We first show that $\varphi_n: D_n \hookrightarrow E_{n+1}$ and $\varrho_n: D_n \hookrightarrow E_{n+1}$ are equivalent if and only if $n=7$.  Consider the following decompositions.
   \begin{equation}\label{wq1}
\begin{array}{llllllll}
 V_{E_6}(\lambda_6)|_{\varphi(D_5)} 
&\cong&   V_{D_5}(0) \oplus  V_{D_5}(\lambda_1)\oplus V_{D_5}(\lambda_5),\\
V_{E_6}(\lambda_6)|_{\varrho (D_5)}
&\cong  & V_{D_5}(0) \oplus  V_{D_5}(\lambda_1)\oplus V_{D_5}(\lambda_4).
\end{array} 
 \end{equation}
$V_{E_6}(\lambda_6)|_{\varphi(D_5)} \ncong  V_{E_6}(\lambda_6)|_{\varrho (D_5)}$ implies $\varphi_5 \nsim_L \varrho_5$, and hence 
    \begin{equation}\label{w1}
 \varphi_5 \nsim \varrho_5.
 \end{equation}
 Also,
  \begin{equation}\label{wq2}
\begin{array}{llllllll}
 V_{E_7}(\lambda_7)|_{\varphi(D_6)} 
&\cong&   2V_{D_6}(\lambda_1)\oplus V_{D_6}(\lambda_6),\\
V_{E_7}(\lambda_7)|_{\varrho (D_6)}
&\cong  &  2V_{D_6}(\lambda_1)\oplus V_{D_6}(\lambda_5).
\end{array} 
 \end{equation}
$V_{E_7}(\lambda_7)|_{\varphi(D_6)} \ncong  V_{E_7}(\lambda_7)|_{\varrho (D_6)}$  implies $\varphi_6 \nsim_L \varrho_6$, and hence 
  \begin{equation}\label{w2}
 \varphi_6 \nsim \varrho_6.
 \end{equation}
 Finally,
 \begin{equation}\label{wq3}
\begin{array}{llllllllllllll}
 V_{E_8}(\lambda_8)|_{\varphi(D_7)} \cong  V_{E_8}(\lambda_8)|_{\varrho (D_7)}&\cong    V_{D_7}(\lambda_2) \oplus V_{D_7}(\lambda_1)  \oplus 
  V_{D_7}(\lambda_6)\\
 &\oplus V_{D_7}(\lambda_7) \oplus 
 V_{D_7}(\lambda_1)\oplus V_{D_7}(0).
\end{array} 
 \end{equation}
 $V_{E_8}(\lambda_8)|_{\varphi(D_7)} \cong  V_{E_8}(\lambda_8)|_{\varrho (D_7)}$  
 implies $\varphi_7 \sim_L \varrho_7$ (Theorem \ref{dymin2}), and hence, by Theorem  \ref{dymin2b},
 \begin{equation}\label{w3}
 \varphi_7 \sim \varrho_7.
 \end{equation}
 Since, by Theorem \ref{dymina}, there is only one $D_n$ subalgebra in $E_{n+1}$, up to equivalence, the only possible embedding inequivalent to $\varphi_n$ is formed from composing $\varphi_n$ with an outer automorphism of $D_n$, of which there is only one for $n>4$, namely $\rho_n$ \cite{fh}.  In other words, the only possible embedding inequivalent to $\varphi_n$ is $\varrho_n$.  The result then follows from Eqs. \eqref{w1}, \eqref{w2}, and \eqref{w3}.
 \end{proof}

\section{Abelian extensions of ${D}_n$ in $E_{n+1}$}\label{sec6}

In this section we determine the abelian extensions of $D_n$ that may be embedded into $E_{n+1}$, $n=5, 6$, $7$.   The results are summarized in Propositions \ref{mhh}, \ref{mhhh}, and \ref{mhhhh}.

\subsection{Abelian extensions of  ${D}_7$ in $E_{8}$}
We begin by calculating the decomposition of $E_8$ with respect to the adjoint action of $\varphi_7(D_7)$:
\begin{equation}\label{hhhhj}
\begin{array}{lllllllllllll}
E_8&\cong_{\varphi_7(D_7)}& [  X']_{\varphi_7(D_7)}&\oplus&  [ X''' ]_{\varphi_7(D_7)} &\oplus&  [ Y_{1} ]_{\varphi_7(D_7)} &\oplus\\
&&  [ X'']_{\varphi_7(D_7)} &\oplus&  [ Y' ]_{\varphi_7(D_7)}&\oplus& 
[ H ]_{\varphi_7(D_7)}\\
&  \cong_{\varphi_7(D_7)} & V_{D_7}(\lambda_2) &\oplus& V_{D_7}(\lambda_1) &\oplus& V_{D_7}(\lambda_6)&\oplus  \\
&& V_{D_7}(\lambda_7) &\oplus& V_{D_7}(\lambda_1)&\oplus&  V_{D_7}(0),
\end{array}
\end{equation}
where
\begin{equation}\label{ghyz}
\begin{array}{llll}
\displaystyle  X'&=&X_{4,5,6,7,8,2,3,4,5,6,7},\\ 
\displaystyle  X'' &=&-X_{3,4,2,1,5,4,3,6,5,4,7,2,6,5,8,7,6,4,5,3,4,2}, \\  
\displaystyle  X'''&=&  X_{8,7,6,5,4,3,2,1,4,5,6,7,3,4,5,6,2,4,5,3,4,2,1,3, 4,5,6,7,8},\\
\displaystyle Y'&=& -Y_{5,4,2,3,6,4,1,3,5,4,7,2,6,5,4,3,1},\\ 
\displaystyle H&=&  4H_1+5H_2+7H_3+10H_4+\\
\displaystyle && 8H_5+6H_6+4H_7+2H_8.
\end{array}
\end{equation}
Note that 
\begin{equation} \label{idd}
\varphi_7(D_7) = [X']_{\varphi_7(D_7)} \cong V_{D_7}(\lambda_2).
\end{equation}
Since $\varphi_7$ is the only embedding of $D_7$ into $E_8$, up to equivalence (Theorem \ref{dymin}), determining which abelian extensions of $D_7$ may be embedded into $E_8$
amounts
to determining which $E_8$-subspaces among $[ X''' ]_{\varphi_7(D_7)}$, $[ Y_{1} ]_{\varphi_7(D_7)}$, $[ X'']_{\varphi_7(D_7)}$, $[ Y' ]_{\varphi_7(D_7)}$, and  
$[ H ]_{\varphi_7(D_7)}$ in Eq. \eqref{hhhhj}, or direct sums of these subspaces, are abelian subalgebras of $E_8$.
\begin{lemma}\label{abal}
The $E_8$-subspaces  $[ X''']_{\varphi_7(D_7)}$,   $[ Y']_{\varphi_7(D_7)}$, and   $[ H ]_{\varphi_7(D_7)}$   are abelian subalgebras of $E_8$.  
\end{lemma}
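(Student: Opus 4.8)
The plan is to realize $E_8$ as a $\mathbb{Z}$-graded Lie algebra in which $[X''']_{\varphi_7(D_7)}$ and $[Y']_{\varphi_7(D_7)}$ are homogeneous of the two extreme degrees, so that their self-brackets are forced into graded pieces that vanish. The grading element will be the very element $H$ of Eq.~\eqref{ghyz}. Because $[H]_{\varphi_7(D_7)}\cong V_{D_7}(0)$ is the trivial module, $H$ centralizes $\varphi_7(D_7)$; hence $\ad(H)$ commutes with the adjoint action of $\varphi_7(D_7)$, and by Schur's lemma $\ad(H)$ acts as a scalar on each irreducible summand of the decomposition \eqref{hhhhj}.

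Next I would compute these scalars. A short calculation from the Cartan matrix of $E_8$ gives $\alpha_1(H)=1$ and $\alpha_i(H)=0$ for $2\le i\le 8$ (this is precisely the assertion that $H$ centralizes the $D_7$ sitting on nodes $2,\dots,8$). Consequently, for a root vector $X_\gamma$ of $E_8$ the eigenvalue of $\ad(H)$ is $\gamma(H)$, which equals the coefficient of $\alpha_1$ in the expansion of $\gamma$ into simple roots. Thus $\ad(H)$ induces the grading $E_8=\bigoplus_{d}(E_8)_d$ by $\alpha_1$-coefficient. Reading off the generators in \eqref{ghyz}: the simple-root multiplicities in the long bracket defining $X'''$ reproduce the marks $(2,3,4,6,5,4,3,2)$ of the highest root $\theta$ of $E_8$, so $X'''$ is the highest-root vector and $[X''']_{\varphi_7(D_7)}$ is homogeneous of degree $\theta(H)=2$; similarly $Y'$ has $\alpha_1$-coefficient $-2$, so $[Y']_{\varphi_7(D_7)}$ is homogeneous of degree $-2$.

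The grading is bounded: the $\alpha_1$-coefficient of any root of $E_8$ lies in $\{-2,-1,0,1,2\}$ since the highest-root mark of $\alpha_1$ is $2$. Therefore $(E_8)_4=(E_8)_{-4}=0$, and
\[
\bigl[\,[X''']_{\varphi_7(D_7)},[X''']_{\varphi_7(D_7)}\,\bigr]\subseteq (E_8)_4=0,\qquad \bigl[\,[Y']_{\varphi_7(D_7)},[Y']_{\varphi_7(D_7)}\,\bigr]\subseteq (E_8)_{-4}=0,
\]
so both subspaces are abelian and, being closed under the identically zero bracket, are abelian subalgebras. Equivalently, without the grading language: every element of $[X''']_{\varphi_7(D_7)}$ is a combination of root vectors $X_\gamma$ with $\gamma$ of $\alpha_1$-coefficient $2$, and the sum of two such roots has $\alpha_1$-coefficient $4$, hence is not a root of $E_8$, so all the relevant brackets vanish; the same count with coefficient $-2$ disposes of $[Y']_{\varphi_7(D_7)}$. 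Finally $[H]_{\varphi_7(D_7)}\cong V_{D_7}(0)$ is one-dimensional, so $[H,H]=0$ and it is trivially an abelian subalgebra.

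I expect the only genuine work to be bookkeeping: verifying the two eigenvalue identities $\alpha_1(H)=1$ and $\alpha_i(H)=0$, and checking that the long bracket for $X'''$ is indeed the highest-root vector while $Y'$ sits at $\alpha_1$-coefficient $-2$. These are finite checks, carried out either by hand via the Chevalley--Serre relations \eqref{cbd} or with GAP, and once they are in place the abelianness is immediate from the degree count $2+2=4>2$. The one conceptual point to secure is that $H$ genuinely centralizes $\varphi_7(D_7)$, so that Schur's lemma applies and the summands are honest $\ad(H)$-eigenspaces; this is exactly what $[H]_{\varphi_7(D_7)}\cong V_{D_7}(0)$ provides. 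Note that this argument is specific to the extreme degrees $\pm2$ and does not apply to the degree $\pm1$ summands $[X'']_{\varphi_7(D_7)}$ and $[Y_1]_{\varphi_7(D_7)}$, consistent with the lemma singling out only $[X''']_{\varphi_7(D_7)}$, $[Y']_{\varphi_7(D_7)}$, and $[H]_{\varphi_7(D_7)}$.
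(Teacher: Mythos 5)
Your proof is correct, but it takes a genuinely different route from the paper's. The paper's own argument is computational: it writes down the explicit $14$-element basis \eqref{asgg} of $[X''']_{\varphi_7(D_7)}$, verifies by direct calculation (Chevalley--Serre relations or GAP) that $[[X''']_{\varphi_7(D_7)}, X''']=0$ as in \eqref{pjii}, and then bootstraps to $[[X''']_{\varphi_7(D_7)},[X''']_{\varphi_7(D_7)}]=0$ via the Jacobi identity and the $\varphi_7(D_7)$-invariance of the subspace; the same is done for $Y'$, and $[H]_{\varphi_7(D_7)}$ is dismissed as one-dimensional, exactly as you do. You instead exhibit $\ad(H)$ as the grading element of the parabolic $5$-grading of $E_8$ at node $1$: your scalar checks are right ($\alpha_1(H)=2\cdot 4-7=1$ and $\alpha_i(H)=0$ for $2\le i\le 8$, against the coefficient vector $(4,5,7,10,8,6,4,2)$ and the Cartan matrix), your multiplicity count in \eqref{ghyz} correctly places $X'''$ at $\alpha_1$-coefficient $2$ (indeed the highest-root marks $(2,3,4,6,5,4,3,2)$) and $Y'$ at $-2$, and this is corroborated by the paper's later use of $[H,X''']=2X'''$ and $[H,Y']=-2Y'$ in the proof of Lemma \ref{pa}. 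One small caution: the appeal to Schur's lemma is loose, because $V_{D_7}(\lambda_1)$ occurs with multiplicity two in \eqref{hhhhj}, so a $D_7$-equivariant operator need not preserve each individual summand a priori; but your parallel formulation closes this gap without Schur --- $[X''']_{\varphi_7(D_7)}$ is spanned by iterated brackets of degree-zero elements (the generators of $\varphi_7(D_7)$ carry indices $2,\dots,8$) against the degree-$2$ vector $X'''$, hence is homogeneous of degree $2$, and $[(E_8)_2,(E_8)_2]\subseteq (E_8)_4=0$. Comparing the two approaches: the paper's is mechanical and uniform, while yours is conceptual and cheaper, replacing the $14$-dimensional bracket verification by two scalar evaluations, and it explains structurally why exactly these summands are singled out --- the degree-$\pm 1$ pieces $[X'']_{\varphi_7(D_7)}$ and $[Y_1]_{\varphi_7(D_7)}$ bracket into $(E_8)_{\pm 2}\neq 0$, consistent with Lemma \ref{notab}, and the same idea applied to the $3$-grading of $E_6$ at node $1$ would likewise predict Lemma \ref{aballl}.
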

\begin{proof}
Let $X'''_{a_{1},a_{2},...,a_{m}}=  [Y_{a_n},[Y_{a_{n-1}},...,[Y_{a_2},[Y_{a_1},X''']]...]]$;  then the following is a basis of  $V_{D_7}(\lambda_1)\cong [ X''']_{\varphi_7(D_7)}$:
 \begin{equation}\label{asgg}
\begin{array}{lllllll}
X''', &X'''_{8},& X'''_{8,7}, \\
X'''_{8,7,6}, & X'''_{8,7,6,5}, & X'''_{8,7,6,5,4},\\
 X'''_{8,7,6,5,4,2}, & X'''_{8,7,6,5,4,3}, & X'''_{8,7,6,5,4,2,3},\\
 X'''_{8,7,6,5,4,2,3,4}, & X'''_{8,7,6,5,4,2,3,4,5}, & X'''_{8,7,6,5,4,2,3,4,5,6},\\
  X'''_{8,7,6,5,4,2,3,4,5,6,7}, & X'''_{8,7,6,5,4,2,3,4,5,6,7,8}.
\end{array}
\end{equation}
Direct calculation shows
\begin{equation}\label{pjii}
[[ X''' ]_{\varphi_7(D_7)}, X''']=0.
\end{equation}
Eq. \eqref{pjii} together with the Jacobi identity imply
\begin{equation}
[[ X''' ]_{\varphi_7(D_7)}, [ X''']_{\varphi_7(D_7)}]=0.
\end{equation}
Hence, $[ X''' ]_{\varphi_7(D_7)}$ is an abelian subalgebra of $E_8$.   In a similar fashion, we show $[ Y']_{\varphi_7(D_7)}$ is an abelian subalgebra of $E_8$.   $[ H ]_{\varphi_7(D_7)}$  is $1$-dimensional, hence is  necessarily abelian. 
\end{proof}

\begin{lemma}\label{notab}
The subspaces $[\alpha X'''+ \beta Y']_{\varphi_7(D_7)}$, with $\alpha, \beta \in \mathbb{C}^*$,  $[X''']_{\varphi_7(D_7)}$ $\oplus$ $[Y']_{\varphi_7(D_7)}$, 
$[X''']_{\varphi_7(D_7)} \oplus [H]_{\varphi_7(D_7)}$, $[H]_{\varphi_7(D_7)} \oplus [Y']_{\varphi_7(D_7)}$,  $[X''']_{\varphi_7(D_7)} \oplus [Y']_{\varphi_7(D_7)} \oplus [H]_{\varphi_7(D_7)}$,  $[Y_1]_{\varphi_7(D_7)}$, and $[X'']_{\varphi_7(D_7)}$ are not  abelian subalgebras of $E_8$. 
\end{lemma}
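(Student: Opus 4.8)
The plan is to organize all seven cases around a single $\mathbb{Z}$-grading of $E_8$. Since $\varphi_7$ identifies $D_7$ with the subalgebra generated by $X_2,\dots,X_8$ and $Y_2,\dots,Y_8$ (the nodes of the Dynkin diagram of $E_8$ other than node $1$), every root occurring in $\varphi_7(D_7)$ involves only $\alpha_2,\dots,\alpha_8$. Assigning to each root vector the coefficient of $\alpha_1$ in its root therefore defines a grading $E_8=\bigoplus_{d=-2}^{2}(E_8)_d$ compatible with \eqref{hhhhj}: counting the occurrences of the index $1$ in the subscripts of \eqref{ghyz} (the $Y$-vectors contributing negative degrees) gives $(E_8)_2=[X''']_{\varphi_7(D_7)}$, $(E_8)_{-2}=[Y']_{\varphi_7(D_7)}$, $(E_8)_{1}=[X'']_{\varphi_7(D_7)}$, $(E_8)_{-1}=[Y_1]_{\varphi_7(D_7)}$, and $(E_8)_0=\varphi_7(D_7)\oplus\mathbb{C}H$. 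Moreover $\mathrm{ad}\,H$ acts on $(E_8)_d$ by the scalar $d\,\alpha_1(H)$, and $\alpha_1(H)\ne 0$: indeed $[H]_{\varphi_7(D_7)}\cong V_{D_7}(0)$ means $H$ centralizes $\varphi_7(D_7)$, so $\alpha_2(H)=\dots=\alpha_8(H)=0$, and were $\alpha_1(H)$ also zero then $H$ would be central in the simple algebra $E_8$, forcing $H=0$.

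The two half-spin pieces are then immediate. We have $[Y_1]_{\varphi_7(D_7)}\cong V_{D_7}(\lambda_6)$ and $[X'']_{\varphi_7(D_7)}\cong V_{D_7}(\lambda_7)$, each of dimension $2^{7-1}=64$, which exceeds the maximal dimension $36$ of an abelian subalgebra of $E_8$ recorded in Table \ref{exb}; hence neither can be an abelian subalgebra. For the pieces paired with $H$, the grading gives $[H,X''']=2\alpha_1(H)\,X'''\ne 0$ and $[H,Y']=-2\alpha_1(H)\,Y'\ne 0$, so $[X''']_{\varphi_7(D_7)}\oplus[H]_{\varphi_7(D_7)}$ and $[H]_{\varphi_7(D_7)}\oplus[Y']_{\varphi_7(D_7)}$ are not abelian, and the triple sum is not abelian since it contains these.

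The remaining two cases, $[X''']_{\varphi_7(D_7)}\oplus[Y']_{\varphi_7(D_7)}$ and $[\alpha X'''+\beta Y']_{\varphi_7(D_7)}$, require the bracket between the two copies of $V_{D_7}(\lambda_1)$, and here I would use the Killing form $\kappa$ of $E_8$. Since $\kappa$ pairs $(E_8)_d$ with $(E_8)_e$ trivially unless $d+e=0$ and is non-degenerate overall, its restriction to $(E_8)_2\times(E_8)_{-2}=[X''']_{\varphi_7(D_7)}\times[Y']_{\varphi_7(D_7)}$ is a non-degenerate $D_7$-invariant pairing of $V_{D_7}(\lambda_1)$ with itself; as $V_{D_7}(\lambda_1)$ is irreducible, self-dual, and orthogonal, this pairing is a nonzero multiple of its symmetric invariant form. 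For $u\in[X''']$, $v\in[Y']$ one computes $\kappa([u,v],H)=2\alpha_1(H)\,\kappa(u,v)$, so choosing $u,v$ with $\kappa(u,v)\ne 0$ shows $[u,v]$ has a nonzero $\mathbb{C}H$-component and $[X''']_{\varphi_7(D_7)}\oplus[Y']_{\varphi_7(D_7)}$ is not abelian.

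For the diagonal subspace I would parametrize $[\alpha X'''+\beta Y']_{\varphi_7(D_7)}$ as $\{\alpha\,\iota_+(s)+\beta\,\iota_-(s):s\in V_{D_7}(\lambda_1)\}$, where $\iota_\pm$ are the $D_7$-isomorphisms onto $[X''']$ and $[Y']$. Because each of $[X''']$ and $[Y']$ is abelian (Lemma \ref{abal}), the bracket of two such elements collapses to $\alpha\beta\bigl([\iota_+(s),\iota_-(t)]+[\iota_-(s),\iota_+(t)]\bigr)$, i.e. to $\alpha\beta$ times the symmetrization of $B(s,t):=[\iota_+(s),\iota_-(t)]\in(E_8)_0$. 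The $\mathbb{C}H$-component of $B$ equals, up to the nonzero factor $2\alpha_1(H)/\kappa(H,H)$, the pairing $\kappa(\iota_+(s),\iota_-(t))$ of the previous paragraph, which is symmetric and not identically zero; hence the symmetrized bracket is nonzero for suitable $s,t$ and any $\alpha,\beta\in\mathbb{C}^*$, and the diagonal subspace is not abelian. The main obstacle is precisely this last case: a priori the two cross-terms $[\iota_+(s),\iota_-(t)]$ and $[\iota_-(s),\iota_+(t)]$ have equal $E_8$-weights and could cancel, and isolating the symmetric $\mathbb{C}H$-component and invoking non-degeneracy of $\kappa$ is what rules this out uniformly in $\alpha,\beta$. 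All the explicit bracket identities used above can alternatively be confirmed directly from \eqref{cbd} or in GAP.
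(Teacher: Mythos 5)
Six of your seven cases are sound, and your $\mathbb{Z}$-grading by the $\alpha_1$-coefficient is a genuinely more structural route than the paper's (the paper simply exhibits one explicit double bracket and says the remaining direct sums are handled ``in a similar manner,'' plus the same dimension bound from Table \ref{exb} for the $64$-dimensional pieces). But your treatment of the first case, $[\alpha X'''+\beta Y']_{\varphi_7(D_7)}$, contains a genuine error, and it is precisely the case the paper computes explicitly. The cross-term combination $[\iota_+(s),\iota_-(t)]+[\iota_-(s),\iota_+(t)]$ is not the symmetrization of $B(s,t)=[\iota_+(s),\iota_-(t)]$: since $[\iota_-(s),\iota_+(t)]=-[\iota_+(t),\iota_-(s)]=-B(t,s)$, it is the \emph{antisymmetrization} $B(s,t)-B(t,s)$. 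Its $\mathbb{C}H$-component, computed exactly as in your own previous paragraph, is $\kappa\bigl(B(s,t)-B(t,s),H\bigr)=2\alpha_1(H)\bigl(\langle s,t\rangle-\langle t,s\rangle\bigr)$, where $\langle s,t\rangle=\kappa(\iota_+(s),\iota_-(t))$; and since, as you correctly note, the invariant pairing on the irreducible orthogonal module $V_{D_7}(\lambda_1)\cong\mathbb{C}^{14}$ is \emph{symmetric}, this vanishes identically. The very symmetry you invoke kills the component you rely on, so your argument proves nothing for the diagonal subspace. This is confirmed by the paper's computation: $[\alpha X'''+\beta Y',[Y_8,\alpha X'''+\beta Y']]=-2\alpha\beta X'$, and $X'$ lies in $\varphi_7(D_7)\cong V_{D_7}(\lambda_2)$, which is $\kappa$-orthogonal to $H$; the obstruction to commutativity lives entirely in the $\Lambda^2\mathbb{C}^{14}\cong V_{D_7}(\lambda_2)$ (moment-map) component of $(E_8)_0$, exactly the component your $\mathbb{C}H$-projection discards.

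The gap is repairable within your framework, but it needs a new idea rather than a sign fix. For instance: suppose $B(s,t)=B(t,s)$ for all $s,t$. Then, since $B$ is $D_7$-equivariant and $S^2V_{D_7}(\lambda_1)\cong V_{D_7}(2\lambda_1)\oplus V_{D_7}(0)$ while $(E_8)_0\cong V_{D_7}(\lambda_2)\oplus V_{D_7}(0)$, one gets $[\iota_+(s),\iota_-(t)]=c\,\langle s,t\rangle H$ with $c\neq 0$ by non-degeneracy of $\kappa$. Applying the Jacobi identity to $u=\iota_+(s)$, $v=\iota_-(t)$, $w=\iota_+(r)$, and using $[u,w]\in(E_8)_4=0$, yields $\langle s,t\rangle\,\iota_+(r)=\langle r,t\rangle\,\iota_+(s)$ for all $r,s,t$, which is absurd since $\dim V_{D_7}(\lambda_1)=14>1$. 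Hence $B(s,t)-B(t,s)\neq 0$ for some $s,t$, and the diagonal is non-abelian for all $\alpha,\beta\in\mathbb{C}^*$. Alternatively, just verify the paper's single identity $[\alpha X'''+\beta Y',[Y_8,\alpha X'''+\beta Y']]=-2\alpha\beta X'$ directly (e.g.\ in GAP), as the paper does; note also that the paper reuses this explicit style for the sums $[X''']_{\varphi_7(D_7)}\oplus[Y']_{\varphi_7(D_7)}$ etc., where your Killing-form and grading arguments are correct and arguably cleaner.
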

\begin{proof}
Note that $Y_8 \in \varphi_7(D_7)$, so that $[Y_8,  \alpha X'''+ \beta Y'] \in [\alpha X'''+ \beta Y']_{\varphi_7(D_7)}$.  We have 
\begin{equation}
[\alpha X'''+ \beta Y', [Y_8,  \alpha X'''+ \beta Y'] ] = -2\alpha \beta X' \neq 0. 
\end{equation}
Thus, $[\alpha X'''+ \beta Y']_{\varphi_7(D_7)}$ is not abelian.  In a similar manner, we show that $[X''']_{\varphi_7(D_7)} \oplus [Y']_{\varphi_7(D_7)}$, 
$[X''']_{\varphi_7(D_7)} \oplus [H]_{\varphi_7(D_7)}$, $[H]_{\varphi_7(D_7)} \oplus [Y']_{\varphi_7(D_7)}$, and $[X''']_{\varphi_7(D_7)} \oplus [Y']_{\varphi_7(D_7)} \oplus [H]_{\varphi_7(D_7)}$ are not abelian.

Finally, 
$[Y_1]_{\varphi_7(D_7)}$ and $[X'']_{\varphi_7(D_7)}$ are not  abelian subalgebras of $E_8$ by Table \ref{exb} since 
$\dim(V_{\varphi(D_7)}(\lambda_6))$ $=$
$\dim(V_{\varphi(D_7)}(\lambda_7))=64$.
\end{proof}

Note the following isomorphisms of $\varphi(D_7)$-modules:
\begin{equation}\label{labb}
\begin{array}{llllllll}
\displaystyle [H]_{\varphi_7(D_7)} &\cong& V_{D_7}(0) &\cong&  \mathbb{C},\\
\displaystyle [X''']_{\varphi_7(D_7)} &\cong& V_{D_7}(\lambda_1)&\cong& \mathbb{C}^{14},\\
\displaystyle [Y']_{\varphi_7(D_7)} &\cong& V_{D_7}(\lambda_1)&\cong& \mathbb{C}^{14}.
\end{array}
\end{equation}
The irrep $V_{D_{7}}(\lambda_{1})$ is the ``standard'' representation of $D_{7} \cong \mathfrak{so}(14,\mathbb{C})$ on 
${\Bbb C}^{14}$ by matrix multiplication.  We shall write ${\Bbb C}^{14}$ to refer to this irrep, 
and ${\Bbb C}$ to refer to the trivial irrep $V_{D_{7}}(0)$.

Lemmas \ref{abal} and \ref{notab}  then give the following proposition.
\begin{proposition}\label{mhh}
The only abelian extensions of $D_7$ that may be embedded into $E_8$ are the following:
\begin{equation}
\begin{array}{ll}
D_7 \inplus \mathbb{C} ~ \text{and} ~ D_7 \inplus \mathbb{C}^{14}.
\end{array}
\end{equation} 
The abelian subspaces of $E_{8}$ which are isomorphic to ${\Bbb C}$ with respect to the adjoint action of 
$\varphi_7(D_7)$ have (highest weight) vector $\alpha  H$, for $\alpha \in \mathbb{C}^*$.  Those 
which are isomorphic to $\mathbb{C}^{14}$ with respect to the adjoint action of $\varphi_7(D_7)$   
have highest weight vector 
$\alpha  X'''$ or $\alpha  Y'$, for $\alpha \in \mathbb{C}^*$.  
\end{proposition}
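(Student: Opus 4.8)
The plan is to translate the statement about abstract abelian extensions into a statement about $\varphi_7(D_7)$-submodules of $E_8$, and then to enumerate those submodules. By Theorem \ref{dymin}, $\varphi_7$ is the unique embedding of $D_7$ into $E_8$ up to equivalence, so any embedding of $D_7\inplus V$ restricts on $D_7$ to an embedding equivalent to $\varphi_7$; after composing with an inner automorphism of $E_8$ I may assume the restriction is exactly $\varphi_7$, i.e. that the embedding is a lift of $\varphi_7$. For such a lift, the image $V'$ of $V$ is a subspace on which $\varphi_7(D_7)$ acts (by the adjoint action) isomorphically to its action on $V$, so $V'$ is a $\varphi_7(D_7)$-submodule of $E_8$; it satisfies $[V',V']=0$ since $[V,V]=0$, and $V'\cap\varphi_7(D_7)=0$ since $D_7\cap V=0$ in $D_7\inplus V$. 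Conversely, any abelian $\varphi_7(D_7)$-submodule $V'$ with $V'\cap\varphi_7(D_7)=0$ yields a lift, because $\varphi_7(D_7)+V'$ is then automatically closed under the bracket. Thus the proposition reduces to classifying, up to $D_7$-module isomorphism, the abelian $\varphi_7(D_7)$-submodules of $E_8$ that meet $\varphi_7(D_7)$ trivially.

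First I would use complete reducibility of $E_8$ as a $\varphi_7(D_7)$-module together with the decomposition \eqref{hhhhj}. Every submodule is the direct sum of its isotypic parts, and within each isotypic part it is determined by a subspace of the corresponding multiplicity space. From \eqref{hhhhj} the constituents are $\lambda_2$ (multiplicity one, equal to $\varphi_7(D_7)=[X']_{\varphi_7(D_7)}$ by \eqref{idd}), $\lambda_1$ (multiplicity two, spanning $[X''']_{\varphi_7(D_7)}\oplus[Y']_{\varphi_7(D_7)}$), $\lambda_6$ ($[Y_1]_{\varphi_7(D_7)}$), $\lambda_7$ ($[X'']_{\varphi_7(D_7)}$), and the trivial module ($[H]_{\varphi_7(D_7)}$). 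Since $\lambda_2$ occurs only as $\varphi_7(D_7)$ and $V'\cap\varphi_7(D_7)=0$, no constituent of $V'$ is $\lambda_2$. Lemma \ref{notab} shows $[Y_1]_{\varphi_7(D_7)}$ and $[X'']_{\varphi_7(D_7)}$ are not abelian, so $V'$ can contain neither (a module containing a non-abelian submodule is non-abelian); hence $V'\subseteq[X''']_{\varphi_7(D_7)}\oplus[Y']_{\varphi_7(D_7)}\oplus[H]_{\varphi_7(D_7)}$. The remaining work is to enumerate the submodules here: the trivial part is $0$ or $[H]_{\varphi_7(D_7)}$, and the $\lambda_1$-part, being a submodule of $\lambda_1\oplus\lambda_1$, is $0$, a single ``diagonal'' copy $[\alpha X'''+\beta Y']_{\varphi_7(D_7)}$ with $(\alpha,\beta)\neq(0,0)$, or the whole $[X''']_{\varphi_7(D_7)}\oplus[Y']_{\varphi_7(D_7)}$.

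I would then apply Lemmas \ref{abal} and \ref{notab} case by case. Lemma \ref{abal} gives that $[H]_{\varphi_7(D_7)}$, $[X''']_{\varphi_7(D_7)}$, and $[Y']_{\varphi_7(D_7)}$ are abelian; Lemma \ref{notab} eliminates every remaining candidate, namely the genuine diagonals $[\alpha X'''+\beta Y']_{\varphi_7(D_7)}$ with $\alpha,\beta\in\mathbb{C}^*$, the sum $[X''']_{\varphi_7(D_7)}\oplus[Y']_{\varphi_7(D_7)}$, the three sums involving $[H]_{\varphi_7(D_7)}$, and hence also any submodule containing one of these. What is left are exactly $[H]_{\varphi_7(D_7)}\cong\mathbb{C}$ and the two pure copies $[X''']_{\varphi_7(D_7)},[Y']_{\varphi_7(D_7)}\cong\mathbb{C}^{14}$ (using \eqref{labb}), giving the two abelian extensions $D_7\inplus\mathbb{C}$ and $D_7\inplus\mathbb{C}^{14}$, with highest weight vectors $\alpha H$ in the first case and $\alpha X'''$ or $\alpha Y'$ in the second. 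I expect the main subtlety to be the multiplicity-two constituent $\lambda_1$: one must treat the full one-parameter family of diagonal copies $[\alpha X'''+\beta Y']_{\varphi_7(D_7)}$ rather than only the two coordinate copies, and it is precisely the non-abelianness of these diagonals (Lemma \ref{notab}) that prevents extra copies of $\mathbb{C}^{14}$ and confirms that $\alpha X'''$ and $\alpha Y'$ are the only highest weight vectors producing an abelian $\mathbb{C}^{14}$.
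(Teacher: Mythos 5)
Your proposal is correct and follows essentially the same route as the paper: reduce via Theorem \ref{dymin} to classifying abelian $\varphi_7(D_7)$-submodules in the decomposition \eqref{hhhhj}, then apply Lemmas \ref{abal} and \ref{notab} to rule everything in or out. Your explicit treatment of the multiplicity-two $\lambda_1$-isotypic component (the diagonal copies $[\alpha X'''+\beta Y']_{\varphi_7(D_7)}$) is exactly what the paper's Lemma \ref{notab} covers, just spelled out more carefully than the paper's prose.
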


\subsection{Abelian extensions of $D_6$ in $E_7$}

We begin by calculating the decomposition of $E_7$ with respect to the adjoint action of $\varphi_6(D_6)$ and $\varrho_6(D_6)$, respectively:
\begin{equation}\label{hhhh}
\begin{array}{lllllllll}
E_7 &
\cong_{\varphi_6(D_6)}&  [ X']_{\varphi_6(D_6)} &\oplus& [ X'' ]_{\varphi_6(D_6)} &\oplus& [ Y_1 ]_{\varphi_6(D_6)} &\oplus& \\
&& [ Y' ]_{\varphi_6(D_6)} &\oplus& [ X''' ]_{\varphi_6(D_6)}& \oplus &
 [ H]_{\varphi_6(D_6)} \\
&\cong_{\varphi_6(D_6)} &V_{D_6}(\lambda_2) &\oplus&     2V_{D_6}(\lambda_5) &\oplus& 3 V_{D_6}(0),
\end{array}
\end{equation}
\begin{equation}\label{hhhhp}
\begin{array}{lllllllllll}
E_7 &
\cong_{\varrho_6(D_6)}&  [ X']_{\varrho_6(D_6)} &\oplus& [ X'' ]_{\varrho_6(D_6)} &\oplus& [ Y_1 ]_{\varrho_6(D_6)} &\oplus& \\
&& [ Y' ]_{\varrho_6(D_6)} &\oplus& [ X''' ]_{\varrho_6(D_6)}& \oplus& 
 [ H]_{\varrho_6(D_6)} \\
&\cong_{\varrho_6(D_6)} &V_{D_6}(\lambda_2) &\oplus  &   2V_{D_6}(\lambda_6)& \oplus &3 V_{D_6}(0),
\end{array}
\end{equation}
where
\begin{equation}\label{ghyy}
\begin{array}{lllllll}
\displaystyle  X'&=& X_{6,7,5,4,3,2,4,5,6},\\ 
\displaystyle  X''&=& X_{7,6,5,4,3,2,4,5,6,1,3,4,5,2,4,3}, \\  
\displaystyle  X'''&=& -X_{7,6,5,4,3,2,4,5,6,1,3,4,5,2,4,3,1},\\ 
\displaystyle  Y'&= &-Y_{7,6,5,4,3,2,4,5,6,1,3,4,5,2,4,3,1}, \\ 
\displaystyle  H&= &2H_1+2H_2+3H_3+4H_4+3H_5+2H_6+H_7.
\end{array}
\end{equation}
Note that 
\begin{equation} \label{idddd}
\begin{array}{lllll}
\varphi_6(D_6) &=& [X']_{\varphi_6(D_6)} &\cong& V_{D_6}(\lambda_2),\\
\varrho_6(D_6) &=& [X']_{\varrho_6(D_6)} &\cong& V_{D_6}(\lambda_2).
\end{array}
\end{equation}
Since $\varphi_6$ and $\varrho_6$ are the only embeddings of $D_6$ into $E_7$, up to equivalence (Theorem \ref{dymin}), 
determining which abelian extensions of $D_6$ may be embedded into $E_7$ amounts to determining which $E_7$-subspaces in Eq. \eqref{hhhh} or \eqref{hhhhp} (excluding $[X']_{\varphi_6(D_6)}$ and $[X']_{\varrho_6(D_6)}$), or direct sums of these subspaces, are abelian subalgebras of $E_7$.

Eqs. \eqref{hhhh} and \eqref{hhhhp} imply the following lemma.
\begin{lemma}\label{notabb}
With respect to the adjoint action of $\varphi_6(D_6)$ or $\varrho_6(D_6)$,
all
highest weight vectors in $E_7$
having weight zero are of the form  $\alpha Y' +\beta X''' +\gamma  H$, for $\alpha, \beta, \gamma \in \mathbb{C}$, not all zero.  
\end{lemma}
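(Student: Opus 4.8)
The plan is to identify the weight-zero highest weight vectors with the trivial isotypic component of $E_7$ and then exhibit an explicit basis for it. I would use the general fact that, for the semisimple algebra $D_6$ acting on a finite-dimensional space, a vector is a highest weight vector of weight zero if and only if it lies in a trivial summand $V_{D_6}(0)$: a highest weight vector of weight $0$ generates an irreducible module of highest weight $0$, which is trivial, and conversely every vector of a trivial summand is killed by all the raising operators and by the Cartan. Hence the space of weight-zero highest weight vectors is exactly the $V_{D_6}(0)$-isotypic component, equivalently the centralizer of $\varphi_6(D_6)$ in $E_7$. By the decomposition \eqref{hhhh} this component is $3V_{D_6}(0)$, so it is $3$-dimensional, and it suffices to produce three linearly independent $\varphi_6(D_6)$-invariant vectors.

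First I would record that $\varphi_6(D_6)$ is the regular $D_6$ generated by $X_2,\dots,X_7,Y_2,\dots,Y_7$ of $E_7$ (the subdiagram of Figure \ref{ddd} obtained by deleting node $1$). Next I would check that $Y'$, $X'''$, and $H$ are each annihilated by $\ad\,\varphi_6(D_6)$, i.e. that $[Y']_{\varphi_6(D_6)}$, $[X''']_{\varphi_6(D_6)}$, and $[H]_{\varphi_6(D_6)}$ are the three trivial summands in \eqref{hhhh}. For $H$ one verifies that $\alpha_k(H)=0$ for $k=2,\dots,7$, so $H$ commutes with every Chevalley generator of $\varphi_6(D_6)$; indeed $H$ is, up to scale, the semisimple element attached to the highest root $\theta$ of $E_7$. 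For $X'''$ and $Y'$ one computes from \eqref{ghyy} that their $E_7$-weights are $\theta$ and $-\theta$, so $X'''$ and $Y'$ are scalar multiples of the highest and lowest root vectors $X_{\theta}$ and $X_{-\theta}$. Since the node deleted to form $\varphi_6(D_6)$ is precisely the one to which the affine node attaches, $\langle\theta,\alpha_k\rangle=0$ for $k=2,\dots,7$; hence neither $\theta+\alpha_k$ nor $\theta-\alpha_k$ is a root, and $X_{\pm\theta}$ commute with all of $X_2,\dots,X_7,Y_2,\dots,Y_7$. Thus $\langle X_{\theta},X_{-\theta},H\rangle_{E_7}$ is the $\mathfrak{sl}(2,\mathbb{C})$ centralizing $\varphi_6(D_6)$, and $Y',X''',H$ are three linearly independent invariants.

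It then follows that $Y',X''',H$ form a basis of the $3$-dimensional space of weight-zero highest weight vectors, so every such vector is $\alpha Y'+\beta X'''+\gamma H$; since a highest weight vector is nonzero and $Y',X''',H$ are independent, $(\alpha,\beta,\gamma)\neq(0,0,0)$. For $\varrho_6$ no separate computation is needed: as $\rho_6$ is an automorphism of $D_6$ we have $\varrho_6(D_6)=\varphi_6(\rho_6(D_6))=\varphi_6(D_6)$, and because $\rho_6$ merely permutes the simple root vectors, hence preserves the positive system, the highest-weight and weight-zero conditions coincide for both embeddings; the same three vectors work, now read from \eqref{hhhhp}.

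The main obstacle is the middle step: confirming that $[Y']$, $[X''']$, and $[H]$ are genuinely the trivial summands rather than, say, highest weight lines of the half-spin components $V_{D_6}(\lambda_5)$. The highest-root argument above settles this cleanly, identifying the three invariants with the standard $A_1$ factor of the $A_1\times D_6$ subalgebra of $E_7$; alternatively one may verify directly in GAP \cite{gap} that $\ad\,X_i$ and $\ad\,Y_i$ ($i=2,\dots,7$) annihilate each of $Y'$, $X'''$, $H$. Everything else is the standard dictionary between weight-zero highest weight vectors and trivial submodules, together with the multiplicity $3$ read off from the given decomposition.
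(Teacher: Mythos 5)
Your proof is correct, and its skeleton --- identifying the weight-zero highest weight vectors with the trivial isotypic component and reading off its multiplicity from \eqref{hhhh} and \eqref{hhhhp} --- is exactly how the paper obtains the lemma: there the statement is presented as an immediate consequence of those two decompositions, with no further argument given. What you add is genuinely different in one respect. In the paper, the identification of $[Y']_{\varphi_6(D_6)}$, $[X''']_{\varphi_6(D_6)}$, $[H]_{\varphi_6(D_6)}$ as the three trivial summands rests on direct computation with the elements \eqref{ghyy} (in practice via GAP), whereas you re-derive it structurally: the index counts in \eqref{ghyy} show that $X'''$ and $Y'$ are root vectors for $\pm\theta$, with $\theta$ the highest root of $E_7$, and that $H$ is the corresponding semisimple element, so $\langle X''', Y', H\rangle_{E_7}$ is the $\mathfrak{sl}(2,\mathbb{C})$ factor of the maximal $A_1\times D_6$ subalgebra, which centralizes the regular $D_6$ because the affine node of the extended Dynkin diagram attaches only at node $1$. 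This buys a computation-free explanation of \emph{why} precisely these three vectors are invariant, and it makes transparent your (correct) observation that $\varrho_6(D_6)=\varphi_6(D_6)$ as subalgebras, so the $\varrho_6$ case needs no separate treatment; the paper's route buys brevity, since once \eqref{hhhh} and \eqref{hhhhp} are accepted the lemma is literally a restatement of where the summand $3V_{D_6}(0)$ sits. Note only that your argument still leans on \eqref{hhhh} for the multiplicity count (that the centralizer is exactly $3$-dimensional), so it supplements rather than replaces the paper's decomposition.
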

\begin{lemma}\label{notabbc}
$[\alpha Y' +\beta X''' +\gamma  H]_{\varphi_6(D_6)}$  and  $[\alpha Y' +\beta X''' +\gamma  H]_{\varrho_6(D_6)}$ are abelian  subalgebras of $E_7$.   Further, any other subspace in the decomposition  of Eq. \eqref{hhhh} or \eqref{hhhhp}, or direct sums of such subspaces, is not an abelian subalgebra of $E_7$.
\end{lemma}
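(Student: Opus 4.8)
The plan is to split the candidate subspaces into three types and handle each separately: the one-dimensional weight-zero pieces (abelian for trivial reasons), the two $32$-dimensional spinor summands (excluded by dimension), and the remaining two- and three-dimensional subspaces of the weight-zero part (excluded by an explicit $\mathfrak{sl}(2,\mathbb{C})$ computation).

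First I would record the structural input behind Lemma \ref{notabb}: the summands $[Y']_{\varphi_6(D_6)}$, $[X''']_{\varphi_6(D_6)}$, and $[H]_{\varphi_6(D_6)}$ are each isomorphic to $V_{D_6}(0)$, hence one-dimensional, and together they span the centralizer $\mathfrak{z}$ of $\varphi_6(D_6)$ in $E_7$. Thus any $v=\alpha Y'+\beta X'''+\gamma H$ is centralized by $\varphi_6(D_6)$, so $[v]_{\varphi_6(D_6)}=\mathbb{C}v$ is one-dimensional; a one-dimensional subspace is automatically a subalgebra and is abelian since $[v,v]=0$. This proves the first assertion. Because $\varrho_6=\varphi_6\circ\rho_6$ with $\rho_6$ an automorphism of $D_6$, the subalgebras $\varrho_6(D_6)$ and $\varphi_6(D_6)$ coincide as subsets of $E_7$, and the elements in \eqref{ghyy} are the same in both decompositions, so the identical spaces and brackets occur and the $\varrho_6$ statement follows verbatim.

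For the second assertion I would first dispose of the spinors by dimension: $[X'']_{\varphi_6(D_6)}$ and $[Y_1]_{\varphi_6(D_6)}$ are each isomorphic to $V_{D_6}(\lambda_5)$ (to $V_{D_6}(\lambda_6)$ in \eqref{hhhhp}), of dimension $2^5=32$. By Table \ref{exb} a maximal abelian subalgebra of $E_7$ has dimension $27$, so neither spinor is abelian, and any direct sum containing a spinor summand has dimension at least $32>27$ and fails as well. What remains are the subspaces $[Y']\oplus[X''']$, $[Y']\oplus[H]$, $[X''']\oplus[H]$, and $[Y']\oplus[X''']\oplus[H]$ of $\mathfrak{z}$.

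The crux is to show $\mathfrak{z}$ is not abelian. Here I would use that $X'''$ and $Y'$ are built from the same index string, so they are scalar multiples of opposite root vectors $E_\mu$, $E_{-\mu}$; hence $[X''',Y']$ lies in $\mathfrak{h}$, and since the only Cartan direction in $\mathfrak{z}$ is $\mathbb{C}H$ we get $[X''',Y']=c_1H$ with $c_1\neq0$ (because $[E_\mu,E_{-\mu}]=H_\mu\neq0$). Moreover $[H,X''']=c_2X'''$ and $[H,Y']=-c_2Y'$ with $c_2=\mu(H)\neq0$, so that $\langle Y',X''',H\rangle_{E_7}\cong\mathfrak{sl}(2,\mathbb{C})$. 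The actual values of $c_1,c_2$ I would confirm by a short computation with \eqref{cbd} or, more efficiently, in GAP; their nonvanishing is essentially forced by the root-vector structure just described. Since every abelian subalgebra of $\mathfrak{sl}(2,\mathbb{C})$ is at most one-dimensional, none of the four listed subspaces is abelian — equivalently, each of $[X''',Y']$, $[X''',H]$, $[Y',H]$ is nonzero, witnessing non-commutativity directly. The only genuine obstacle is this bracket computation, which is routine; the remainder is bookkeeping about dimensions and the structure of $\mathfrak{sl}(2,\mathbb{C})$.
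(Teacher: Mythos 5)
Your proposal is correct and follows essentially the same three-way split as the paper's proof: the weight-zero pieces are one-dimensional and hence trivially abelian; the spinor summands (and any sum containing one) are excluded by the dimension bound $32>27$ from Table \ref{exb}; and the two- and three-dimensional subspaces of the weight-zero part are excluded by the nonvanishing of $[Y',X''']$, $[Y',H]$, and $[X''',H]$. Where the paper simply asserts these three brackets are nonzero (by direct calculation, implicitly in GAP), you derive them structurally: since $X'''$ and $Y'$ in \eqref{ghyy} are built from the same index string, they are opposite root vectors $aE_\mu$, $bE_{-\mu}$, so $[X''',Y']$ is a nonzero element of $\mathfrak{h}\cap\mathfrak{z}=\mathbb{C}H$; and since this forces $H\propto H_\mu$, the nonvanishing of $\mu(H)$ is automatic from $\mu(H_\mu)\neq 0$ (here $\mu$ is in fact the highest root of $E_7$, consistent with the coefficients of $H$). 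This is a genuine, if modest, improvement: it replaces a machine computation with an argument that also explains \emph{why} $\langle Y',X''',H\rangle_{E_7}\cong\mathfrak{sl}(2,\mathbb{C})$, and your observation that $\varrho_6(D_6)=\varphi_6(D_6)$ as subsets correctly disposes of the $\varrho_6$ case at a stroke.

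One small omission: the lemma quantifies over \emph{all} other subspaces in \eqref{hhhh} and \eqref{hhhhp}, including $[X']_{\varphi_6(D_6)}$ and sums containing it, which you drop from your list of remaining cases; the paper handles this by noting $[X']_{\varphi_6(D_6)}\cong D_6$ is not abelian. The fix is one line — $D_6$ is simple, or $\dim V_{D_6}(\lambda_2)=66>27$ so your dimension argument applies verbatim — so this is bookkeeping rather than a genuine gap, but you should state it.
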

\begin{proof}
$[\alpha Y' +\beta X''' +\gamma  H]_{\varphi_6(D_6)}$  and  $[\alpha Y' +\beta X''' +\gamma  H]_{\varrho_6(D_6)}$ are $1$-dimensional, and hence necessarily abelian.  

The subspaces $[X'']_{\varphi_6(D_6)}, [X'']_{\varrho_6(D_6)}$, $[Y_1]_{\varphi_6(D_6)}$, and  $[Y_1]_{\varrho_6(D_6)}$  are not abelian by Table \ref{exb} since $\dim(V_{D_6}(\lambda_5))$ $=$   $\dim(V_{D_6}(\lambda_6))$ $=$$32>27$.  

$[X']_{\varphi_6(D_6)}$ and $[X']_{\varrho_6(D_6)}$ are isomorphic to $D_6$ and hence not abelian.  Noting $[Y', X'''] \neq 0, [Y',H] \neq 0$, and $[X''',H] \neq 0$  completes the proof.
\end{proof} 
For $\alpha, \beta, \gamma \in \mathbb{C}$, not all zero, note the following isomorphisms:
\begin{equation}\label{labbb}
\begin{array}{llllllll}
\displaystyle [\alpha Y' +\beta X''' +\gamma  H]_{\varphi_6(D_6)} &\cong& \mathbb{C},\\
\displaystyle [\alpha Y' +\beta X''' +\gamma  H]_{\varrho_6(D_6)}  &\cong&  \mathbb{C}.
\end{array}
\end{equation}
Lemma \ref{notabb} and \ref{notabbc} then give us the following proposition.

\begin{proposition}\label{mhhh}
The only abelian extension of $D_6$ that may be embedded into $E_7$ is the following:
\begin{equation}
\begin{array}{ll}
D_6 \inplus \mathbb{C}.
\end{array}\end{equation}
The (abelian) subspaces of $E_{7}$ which are isomorphic to ${\Bbb C}$  with respect to the adjoint action 
of $\varphi_6(D_6)$ or $\varrho_6(D_6)$ have (highest weight) vector  
$\alpha Y' +\beta X''' +\gamma  H$, for $\alpha, \beta, \gamma \in \mathbb{C}$, not all zero. 
\end{proposition}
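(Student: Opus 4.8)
The plan is to assemble the proposition directly from Lemmas \ref{notabb} and \ref{notabbc} together with the decompositions in Eqs.~\eqref{hhhh} and \eqref{hhhhp}. First I would use Theorem \ref{dymin} to reduce to the two embeddings $\varphi_6$ and $\varrho_6$: any lift $\widetilde{\varphi}^V$ of an embedding $D_6 \hookrightarrow E_7$ to $D_6 \inplus V$ restricts on $D_6$ to one of these (up to equivalence), so the image of $V$ must be a subspace of $E_7$ that is invariant under the adjoint action of $\varphi_6(D_6)$ (respectively $\varrho_6(D_6)$), is an abelian subalgebra, and carries $V$ as its induced $D_6$-module structure. Thus it suffices to determine which $D_6$-invariant subspaces of $E_7$, other than $\varphi_6(D_6)$ (resp. $\varrho_6(D_6)$) itself, are abelian subalgebras.

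Next I would note that every $D_6$-invariant subspace is a direct sum of the irreducible summands appearing in Eq.~\eqref{hhhh} or \eqref{hhhhp}. Discarding $[X']_{\varphi_6(D_6)}$, which equals $\varphi_6(D_6)$ (resp. $\varrho_6(D_6)$) by Eq.~\eqref{idddd}, the remaining summands are the two half-spin copies $[X'']$ and $[Y_1]$, each isomorphic to $V_{D_6}(\lambda_5)$ (resp. $V_{D_6}(\lambda_6)$) of dimension $32$, and the three trivial summands $[X''']$, $[Y']$, $[H]$, each isomorphic to $\mathbb{C}$. By Lemma \ref{notabbc} the $32$-dimensional summands cannot be abelian, since $32 > 27$, the maximal dimension of an abelian subalgebra of $E_7$ (Table \ref{exb}); hence no combination containing them is admissible. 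Among the trivial summands, each $1$-dimensional line is abelian, but the nonvanishing brackets $[Y',X''']\neq 0$, $[Y',H]\neq 0$, $[X''',H]\neq 0$ of Lemma \ref{notabbc} show that no $2$- or $3$-dimensional combination is abelian. Therefore the only $D_6$-invariant abelian subalgebras that can serve as the image of $V$ are the $1$-dimensional subspaces spanned by a weight-zero vector, which by Lemma \ref{notabb} are exactly the lines $[\alpha Y' + \beta X''' + \gamma H]$ with $(\alpha,\beta,\gamma)$ not all zero; each carries the trivial module $\mathbb{C}$. This yields $D_6 \inplus \mathbb{C}$ as the unique abelian extension and pins down the highest weight vectors as claimed.

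The main obstacle is not conceptual but already absorbed into the two lemmas: the dimension bound from Table \ref{exb} rules out the half-spin summands at once, while the three explicit nonzero brackets among $Y'$, $X'''$, and $H$ (a short GAP or Chevalley--Serre computation) rule out every nontrivial combination of the weight-zero lines. Given these, the proof is a matter of bookkeeping the decomposition; the only point requiring care is completeness, namely verifying that both embeddings $\varphi_6$ and $\varrho_6$ are handled symmetrically and that no $D_6$-invariant subspace has been overlooked.
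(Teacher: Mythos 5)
Your proposal is correct and takes essentially the same route as the paper, which deduces Proposition \ref{mhhh} immediately from Lemmas \ref{notabb} and \ref{notabbc}, whose proofs consist of precisely your ingredients: the reduction to $\varphi_6$ and $\varrho_6$ via Theorem \ref{dymin}, the Maltsev bound $32>27$ from Table \ref{exb} to exclude the half-spin summands, and the nonvanishing brackets $[Y',X''']$, $[Y',H]$, $[X''',H]$ to exclude multidimensional combinations of the weight-zero lines. Your intermediate claim that every $D_6$-invariant subspace is a direct sum of the summands listed in Eqs.~\eqref{hhhh} and \eqref{hhhhp} is slightly imprecise because of multiplicities (e.g., diagonal copies $[\alpha X''+\beta Y_1]_{\varphi_6(D_6)}$ of $V_{D_6}(\lambda_5)$ are invariant but not among the listed summands), yet this does not damage the argument --- matching the paper's own level of precision --- since the dimension bound excludes \emph{any} $32$-dimensional submodule and you correctly invoke Lemma \ref{notabb} to capture arbitrary lines in the zero-weight space.
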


\subsection{Abelian extensions of $D_5$ in $E_6$}

We begin by calculating the decomposition of $E_6$ with respect to the adjoint action of $\varphi_5(D_5)$ and $\varrho_5(D_5)$:
\begin{equation}\label{iiii}
\begin{array}{lllllllllll}
\displaystyle E_6 &\cong_{\varphi_5(D_5)}& [X']_{\varphi_5(D_5)}&\oplus& [Y_1]_{\varphi_5(D_5)}&\oplus& [X'']_{\varphi_5(D_5)} &\oplus&  \\
\displaystyle&& [H]_{\varphi_5(D_5)}\\
\displaystyle&\cong_{\varphi_5(D_5)}&  V_{D_5}(\lambda_2) &\oplus& V_{D_5}(\lambda_4) &\oplus& V_{D_5}(\lambda_5)&\oplus&  \\
\displaystyle &&V_{D_5}(0),
\end{array}
\end{equation}
\begin{equation}\label{iiiib}
\begin{array}{lllllllllll}
\displaystyle E_6 &\cong_{\varrho_5(D_5)}& [X']_{\varrho_5(D_5)}&\oplus& [Y_1]_{\varrho_5(D_5)}&\oplus& [X'']_{\varrho_5(D_5)} &\oplus & \\
\displaystyle &&[H]_{\varrho_5(D_5)}\\
\displaystyle &\cong_{\varrho_5(D_5)}&  V_{D_5}(\lambda_2) &\oplus& V_{D_5}(\lambda_5) &\oplus& V_{D_5}(\lambda_4)&\oplus & \\
\displaystyle &&V_{D_5}(0),
\end{array}
\end{equation}
where
\begin{equation}\label{ghyyy}
\begin{array}{llllll}
\displaystyle  X'&=&X_{6,5,4,3,2,4,5},\\ 
\displaystyle  X''&=&X_{6,5,4,2,3,1,4,3,5,4,2},\\ 
\displaystyle  H&=&2H_1+\frac{3}{2}H_2+\frac{5}{2}H_3+3H_4+2H_5+H_6.\\ 
\end{array}
\end{equation}
Note that 
\begin{equation} \label{iddd}
\begin{array}{lllll}
\varphi_5(D_5) &=& [X']_{\varphi_5(D_5)} &\cong& V_{D_5}(\lambda_2),\\
\varrho_5(D_5) &=& [X']_{\varrho_5(D_5)} &\cong& V_{D_5}(\lambda_2).
\end{array}
\end{equation}
 
Since $\varphi_5$ and $\varrho_5$ are the only embeddings of $D_5$ into $E_6$, up to equivalence (Theorem \ref{dymin}), 
determining which abelian extensions of $D_5$ may be embedded into $E_6$ amounts to determining which $E_6$-subspaces in Eq. \eqref{iiii} or \eqref{iiiib} (excluding $[X']_{\varphi_5(D_5)}$ and $[X']_{\varrho_5(D_5)}$), or direct sums of these subspaces, are abelian subalgebras of $E_6$.

The proofs of the following two lemmas proceed as in the above subsections and are omitted. 
\begin{lemma}\label{aballl}
The $E_6$-subspaces  $[ Y_1]_{\varphi_5(D_5)}$,   $[ X'']_{\varphi_5(D_5)}$, and $[ H ]_{\varphi_5(D_5)}$, as well as $[ Y_1]_{\varrho_5(D_5)}$,   $[ X'']_{\varrho_5(D_5)}$, and $[ H ]_{\varrho_5(D_5)}$ are abelian subalgebras of $E_6$.  
\end{lemma}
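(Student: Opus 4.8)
The plan is to reduce all six listed subspaces to three genuinely distinct ones, dispose of the one-dimensional case immediately, and then handle the two sixteen-dimensional half-spin subspaces by a single representation-theoretic observation rather than by brute force.

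First I would remove the $\varrho_5$-subspaces from consideration. Since $\rho_5$ is an automorphism of $D_5$ it is surjective, so $\varrho_5(D_5) = \varphi_5(\rho_5(D_5)) = \varphi_5(D_5)$ \emph{as a subalgebra of} $E_6$. The adjoint action of $\varrho_5(D_5)$ on $E_6$ is therefore carried out by exactly the same operators as that of $\varphi_5(D_5)$, whence $[W]_{\varrho_5(D_5)} = [W]_{\varphi_5(D_5)}$ as subsets of $E_6$ for every $W$; the interchanged labels $\lambda_4 \leftrightarrow \lambda_5$ between \eqref{iiii} and \eqref{iiiib} merely record the effect of $\rho_5$ on the abstract module types. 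Thus it suffices to treat $[Y_1]_{\varphi_5(D_5)}$, $[X'']_{\varphi_5(D_5)}$, and $[H]_{\varphi_5(D_5)}$.

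The subspace $[H]_{\varphi_5(D_5)} \cong V_{D_5}(0)$ is one-dimensional and hence trivially an abelian subalgebra. This leaves the two sixteen-dimensional half-spin subspaces $M = [Y_1]_{\varphi_5(D_5)} \cong V_{D_5}(\lambda_4)$ and $M = [X'']_{\varphi_5(D_5)} \cong V_{D_5}(\lambda_5)$. The key point is that the Lie bracket of $E_6$ restricts to a $\varphi_5(D_5)$-equivariant antisymmetric map $M \times M \to E_6$, i.e.\ a module homomorphism $\Lambda^2 M \to E_6$ whose image is precisely $[M,M]$. For either half-spin $S = V_{D_5}(\lambda_4)$ or $V_{D_5}(\lambda_5)$ of $\mathfrak{so}(10,\mathbb{C})$, the tensor square $S \otimes S$ has exactly three irreducible constituents, of dimensions $10$ (the vector $V_{D_5}(\lambda_1)$), $120$ (the $3$-form $V_{D_5}(\lambda_3)$, i.e.\ $\Lambda^3$ of the vector representation), and $126$. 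Since $\dim \Lambda^2 S = \binom{16}{2} = 120$, the only partition of $\{10,120,126\}$ between $\mathrm{Sym}^2 S$ and $\Lambda^2 S$ forces $\Lambda^2 S \cong V_{D_5}(\lambda_3)$, which is irreducible. Because $V_{D_5}(\lambda_3)$ does not appear among the constituents $V_{D_5}(\lambda_2), V_{D_5}(\lambda_4), V_{D_5}(\lambda_5), V_{D_5}(0)$ of $E_6$ in \eqref{iiii}, Schur's lemma forces the map $\Lambda^2 M \to E_6$ to vanish, so $[M,M] = 0$ and $M$ is an abelian subalgebra.

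Alternatively --- and this is the route the phrase ``proceed as in the above subsections'' points to --- one can mimic Lemma \ref{abal} directly: fix the highest-weight generator $W$ (namely $Y_1$, resp.\ $X''$), build a weight basis of $M$ by applying the lowering operators in $\varphi_5(D_5)$ to $W$, check by direct computation in GAP that $[M, W] = 0$ as in \eqref{pjii}, and then bootstrap to $[M,M] = 0$ using the Jacobi identity together with the $\varphi_5(D_5)$-invariance of $M$. In that approach the main obstacle is the explicit bracket computation across all sixteen basis vectors of each $M$ inside $E_6$; the representation-theoretic argument above is exactly what lets me avoid it, collapsing the whole lemma onto the single structural fact $\Lambda^2 S \cong V_{D_5}(\lambda_3)$ and the observation that $V_{D_5}(\lambda_3)$ is absent from $E_6$.
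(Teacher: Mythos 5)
Your proof is correct, but it replaces the paper's computational core with a structural argument. The paper omits the proof, stating it ``proceeds as in the above subsections'': i.e., as in Lemma \ref{abal}, one builds a weight basis of each $16$-dimensional subspace by applying lowering operators to $Y_1$ (resp.\ $X''$), verifies $[[W]_{\varphi_5(D_5)},W]=0$ by direct calculation (in practice with GAP), and bootstraps to $[[W],[W]]=0$ via the Jacobi identity, with $[H]$ trivially abelian by $1$-dimensionality --- exactly the route you sketch in your closing paragraph. Your main argument instead notes that the bracket induces an equivariant map $\Lambda^2 M\to E_6$, that for either half-spin $S$ of $D_5$ the constituents of $S\otimes S$ have dimensions $10$, $120$, $126$, each of multiplicity one, so that $\dim\Lambda^2 S=120$ forces $\Lambda^2 S\cong V_{D_5}(\lambda_3)$, and that $V_{D_5}(\lambda_3)$ is absent from the decomposition \eqref{iiii} of the $78$-dimensional $E_6$ (indeed a $120$-dimensional irreducible cannot embed there at all), whence the map vanishes by Schur's lemma. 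This checks out, and your preliminary reduction is also sound: since $\rho_5$ is surjective, $\varrho_5(D_5)=\varphi_5(D_5)$ as subalgebras of $E_6$, so $[W]_{\varrho_5(D_5)}=[W]_{\varphi_5(D_5)}$ as subspaces and the swapped labels $\lambda_4\leftrightarrow\lambda_5$ in \eqref{iiii} versus \eqref{iiiib} only record the relabelling of abstract module types --- a point the paper leaves implicit by listing both families. What each approach buys: the paper's method is uniform across the $E_8$, $E_7$, and $E_6$ cases and needs no knowledge of tensor-square decompositions, while yours eliminates the $16$-basis-vector bracket computation and explains \emph{why} the half-spin subspaces are abelian --- it is precisely the statement that $E_6 \cong S^-\oplus(\mathfrak{so}(10,\mathbb{C})\oplus\mathbb{C})\oplus S^+$ is a short grading with abelian outer pieces.
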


\begin{lemma}\label{notabbb}
The direct sum of any two or more of the subspaces in the decomposition  of Eq. \eqref{iiii} or \eqref{iiiib} is not an abelian subalgebra of $E_6$.
\end{lemma}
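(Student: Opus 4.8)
The plan is to dispose of the statement with a pure dimension count, invoking the bound from Table \ref{exb} that a maximal abelian subalgebra of $E_6$ has dimension $16$. First I would record the dimensions of the four summands appearing in \eqref{iiii}; the profile for \eqref{iiiib} is identical, with the roles of $\lambda_4$ and $\lambda_5$ simply interchanged. Using the fundamental-weight dimension formula for $D_5$, the piece $[X']_{\varphi_5(D_5)} \cong V_{D_5}(\lambda_2)$ has dimension $\binom{10}{2} = 45$ (consistent with $[X']_{\varphi_5(D_5)} = \varphi_5(D_5) \cong D_5$, of dimension $2\cdot 5^2 - 5 = 45$); the two half-spin pieces $[Y_1]_{\varphi_5(D_5)} \cong V_{D_5}(\lambda_4)$ and $[X'']_{\varphi_5(D_5)} \cong V_{D_5}(\lambda_5)$ each have dimension $2^{5-1} = 16$; and $[H]_{\varphi_5(D_5)} \cong V_{D_5}(0) \cong \mathbb{C}$ has dimension $1$.

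The key observation is that these dimensions are all large relative to the abelian bound. Since $[H]_{\varphi_5(D_5)}$ is the unique $1$-dimensional summand, any direct sum of two or more of the four subspaces has dimension at least $1 + 16 = 17$, the minimum being attained by $[H]_{\varphi_5(D_5)} \oplus [Y_1]_{\varphi_5(D_5)}$ or $[H]_{\varphi_5(D_5)} \oplus [X'']_{\varphi_5(D_5)}$, with every other admissible direct sum strictly larger. As $17 > 16$, Table \ref{exb} shows that a subspace of $E_6$ of this dimension cannot be an abelian subalgebra, regardless of whether it happens to be closed under the bracket. The identical count applies to the decomposition \eqref{iiiib} for $\varrho_5(D_5)$, which has the same dimension profile, and this completes the argument.

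I do not expect a genuine obstacle here. The reason the proof collapses to dimension counting — in contrast to the $D_7$ and $D_6$ cases of Lemmas \ref{notab} and \ref{notabbc}, where the presence of several low-dimensional (in particular $1$-dimensional) summands forced explicit bracket evaluations such as $[\alpha X''' + \beta Y', [Y_8, \alpha X''' + \beta Y']] = -2\alpha\beta X'$ — is precisely that the $D_5$ decomposition has a single trivial summand together with two half-spin summands of dimension exactly $16$, matching the maximal abelian dimension of $E_6$. The only point to verify carefully is the dimension bookkeeping via the formula for $\dim V_{D_5}(\lambda_m)$, so as to be certain the threshold $16$ is crossed by every direct sum of two or more pieces; once that is confirmed, the conclusion is immediate.
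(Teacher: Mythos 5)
Your proof is correct, and it is cleaner than what the paper gestures at. The paper omits the proof of Lemma \ref{notabbb}, saying only that it ``proceeds as in the above subsections''; those subsections (Lemmas \ref{notab} and \ref{notabbc}) use a \emph{mixture} of the Maltsev bound from Table \ref{exb} (for the large summands, e.g.\ $64>36$ in $E_8$ and $32>27$ in $E_7$) and explicit bracket evaluations for the low-dimensional combinations, such as $[\alpha X'''+\beta Y', [Y_8, \alpha X'''+\beta Y']]=-2\alpha\beta X'\neq 0$ and $[Y',X''']\neq 0$. Your observation is that in the $E_6$ case no bracket computation is needed at all: the dimension profile $45+16+16+1=78$ has exactly one $1$-dimensional summand, so the smallest admissible direct sum of two summands, $[H]\oplus[Y_1]$ or $[H]\oplus[X'']$, already has dimension $17>16$, and every other union is strictly larger; the Maltsev bound then kills everything uniformly. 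This is a genuine simplification --- it replaces case-by-case GAP-assisted bracket checks with a single counting argument --- and it also explains structurally why the $E_6$ case is special: the two half-spin summands each have dimension exactly $16$, saturating the maximal abelian dimension (consistently with Lemma \ref{aballl}, which shows they \emph{are} abelian), so adjoining anything further necessarily crosses the threshold. One further remark in your favor: unlike the $D_7$ case, where $[X''']$ and $[Y']$ are isomorphic copies of $V_{D_7}(\lambda_1)$ and diagonal combinations $[\alpha X'''+\beta Y']$ had to be handled separately, here the summands $V_{D_5}(\lambda_4)$, $V_{D_5}(\lambda_5)$, and $V_{D_5}(0)$ are pairwise inequivalent, so the listed direct sums exhaust the $D_5$-invariant candidates and your count genuinely covers all cases the lemma asserts.
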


Note the following isomorphism of $D_5$-modules, with respect to the adjoint action of $\varphi_5(D_5)$ or $\varrho_5(D_5)$, respectively:
\begin{equation}\label{labbbb}
\begin{array}{ll}
\displaystyle [H]_{\varphi_5(D_5)} \cong   [H]_{\varrho_5(D_5)} \cong  \mathbb{C}.
\end{array}
\end{equation}
Lemmas \ref{aballl} and \ref{notabbb} then give the following proposition.
\begin{proposition}\label{mhhhh}
The only abelian extensions of $D_5$ that may be embedded into $E_6$ are the following:
\begin{equation}
\begin{array}{ll}
D_5 \inplus \mathbb{C},  ~ D_5 \inplus V_{D_5}(\lambda_4), ~ \text{and} ~ D_5 \inplus V_{D_5}(\lambda_5).
\end{array}
\end{equation}
The highest weight vectors of $D_{5}$-invariant abelian subspaces of $E_6$ are as follows:  
The only abelian subspace which is isomorphic to ${\Bbb C}$ 
with respect to the adjoint action of $\varphi_5(D_5)$ or $\varrho_5(D_5)$ has 
(highest weight) vector $\alpha  H$, for $\alpha \in \mathbb{C}^*$.

The abelian subspaces of  $E_6$ which carry $V_{D_5}(\lambda_5)$ irreps of 
$D_{5}$ with respect to the adjoint 
action of $\varphi_5(D_5)$ or $\varrho_5(D_5)$  have highest weight vectors
$\alpha  X''$
or $\alpha  Y_1$, respectively, for $\alpha \in \mathbb{C}^*$.  
The abelian subspaces of  $E_6$ which carry 
$V_{D_5}(\lambda_4)$ irreps of 
$D_{5}$ with respect to the adjoint 
action of $\varphi_5(D_5)$ or $\varrho_5(D_5)$ have highest weight vectors
$\alpha  Y_1$
or $\alpha  X''$, respectively, for $\alpha \in \mathbb{C}^*$.  
\end{proposition}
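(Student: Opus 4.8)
The plan is to combine Theorem \ref{dymin} with the two module decompositions \eqref{iiii} and \eqref{iiiib} and then to invoke Lemmas \ref{aballl} and \ref{notabbb}. By Theorem \ref{dymin} every embedding of $D_5$ into $E_6$ is equivalent to $\varphi_5$ or $\varrho_5$, so any abelian extension $D_5 \inplus V$ that embeds into $E_6$ arises, up to equivalence, by sending the $D_5$-factor to $\varphi_5(D_5)$ or $\varrho_5(D_5)$ and sending $V$ to a $D_5$-invariant abelian subalgebra of $E_6$ meeting the image of $D_5$ trivially (this triviality is forced because the embedding $\widetilde{\varphi}^V$ is injective and restricts to $\varphi_5$ on $D_5$, while $[V,V]=0$ makes the image of $V$ abelian and $[D_5,V]\subseteq V$ makes it $D_5$-invariant). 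So I would reduce the problem to classifying the $D_5$-invariant abelian subalgebras of $E_6$ lying in a $D_5$-complement of $\varphi_5(D_5)$, respectively $\varrho_5(D_5)$.

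The key structural fact I would exploit is that both decompositions are multiplicity-free: the highest weights $\lambda_2,\lambda_4,\lambda_5,0$ are pairwise distinct, and in particular the only summand isomorphic to $V_{D_5}(\lambda_2)$ is $[X']=\varphi_5(D_5)$ itself. Consequently the image of $V$ can carry no $\lambda_2$-component (else, by irreducibility of $[X']$, it would meet $\varphi_5(D_5)$ nontrivially), so it lies in $[Y_1]\oplus[X'']\oplus[H]$; and by complete reducibility of $D_5$-modules together with multiplicity-freeness, every $D_5$-submodule there is the direct sum of a subset of the irreducible summands $[Y_1]$, $[X'']$, $[H]$. This is precisely what makes the $D_5$ case cleaner than the $D_6$ and $D_7$ cases, where repeated summands forced the examination of continuous families of diagonally embedded copies (as in Lemma \ref{notab}).

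With this finite list in hand I would simply decide which of these submodules are abelian. Lemma \ref{aballl} shows each of $[Y_1]$, $[X'']$, $[H]$ is an abelian subalgebra, and Lemma \ref{notabbb} shows that no direct sum of two or more of them is abelian; hence the admissible $V$ are exactly the zero module and the three irreducible summands. Reading off their $D_5$-types from \eqref{iiii}, \eqref{iiiib}, and \eqref{labbbb} gives $[H]\cong\mathbb{C}$, while $[Y_1]$ and $[X'']$ realize the two spinor representations $V_{D_5}(\lambda_4)$ and $V_{D_5}(\lambda_5)$, with the roles of these two interchanged between $\varphi_5$ and $\varrho_5$ — this swap being exactly the effect of the outer automorphism $\rho_5$, which exchanges $\lambda_4$ and $\lambda_5$. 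This yields the three extensions $D_5\inplus\mathbb{C}$, $D_5\inplus V_{D_5}(\lambda_4)$, $D_5\inplus V_{D_5}(\lambda_5)$ and identifies the highest weight vectors $\alpha H$, $\alpha Y_1$, $\alpha X''$ as asserted.

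The main obstacle is confined to the two lemmas, which are computational. Proving Lemma \ref{aballl} I would follow the $D_7$ template: check $[[W]_{\varphi_5(D_5)},W]=0$ for $W\in\{Y_1,X''\}$ (the analogue of $[[X''']_{\varphi_7(D_7)},X''']=0$) and then propagate abelianness with the Jacobi identity, $[H]$ being one-dimensional. For Lemma \ref{notabbb} I would exhibit an explicit nonzero bracket between distinct summands, for instance between $[X'']$ and $[H]$ or between $[Y_1]$ and $[X'']$, in the spirit of the nonvanishing brackets used in Lemma \ref{notabbc}. These brackets are most efficiently verified with the Chevalley--Serre relations \eqref{cbd} or with GAP \cite{gap}; once they are established, the proposition follows from the multiplicity-free submodule count above with only routine weight bookkeeping.
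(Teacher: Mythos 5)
Your proposal is correct and takes essentially the same route as the paper: Theorem \ref{dymin} reduces everything to $\varphi_5$ and $\varrho_5$, the decompositions \eqref{iiii} and \eqref{iiiib} together with Lemmas \ref{aballl} and \ref{notabbb} determine which summands (and sums of summands) are abelian, and the paper likewise treats the two lemmas as computations analogous to the $D_7$ case. Your explicit multiplicity-freeness argument simply makes precise a step the paper leaves implicit, namely why no diagonal subspaces of the kind handled in Lemmas \ref{notab} and \ref{notabbc} arise for $D_5$.
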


\section{Classifying the embeddings of the abelian extensions of  ${D}_n$ into $E_{n+1}$}\label{sec7}
 
In this section, we classify, up to inner automorphism,
 the embeddings identified in Section \ref{sec6} of each abelian extension of $D_n$ that may be embedded into $E_{n+1}$.  The classifications are  summarized in Theorems \ref{classaa}, \ref{classaab}, and \ref{classaaddd}. 

\subsection{Embeddings of the abelian extensions of  ${D}_7$ into $E_{8}$}

We begin by constructing lifts of the natural embedding $\varphi_7$  to $D_7 \inplus \mathbb{C}^{14}$, and to $D_7 \inplus \mathbb{C}^{}$.  
Let $u$ be a highest weight vector of the $D_7$-representation $V_{D_7}(\lambda_1)\cong \mathbb{C}^{14}$.  Then, a lift of $\varphi_7$ to $D_7 \inplus \mathbb{C}^{14}$ 
is uniquely determined by its evaluation on $u$.  By Proposition \ref{mhh}, the following then define all possible lifts of the  embedding $\varphi_7$ to $D_7 \inplus \mathbb{C}^{14}$  for each $\alpha \in \mathbb{C}^*$:
\begin{equation}\label{jh}
\begin{array}{cccccccccccccccc}
\begin{array}{cccccccccccc}
\widetilde{\varphi_7}^{\lambda_1,  \alpha}: &D_7 \inplus \mathbb{C}^{14} &\hookrightarrow & E_8\\
&u & \mapsto & \alpha X''', \\
\widetilde{\varphi_7}^{\lambda'_1,  \alpha}: &D_7 \inplus \mathbb{C}^{14} &\hookrightarrow & E_8\\
&u & \mapsto & \alpha Y'. 
\end{array} 
\end{array}
\end{equation}
Similarly, the following defines  all possible lifts of the  embedding $\varphi_7$ to $D_7 \inplus \mathbb{C}^{}$  for each $\alpha \in \mathbb{C}^*$.  
Here $u$ is any nonzero vector in $V_{D_7}(0) \cong \mathbb{C}$,  necessarily 
of highest weight.
\begin{equation}\label{jhg}
\begin{array}{cccccccccc}
\widetilde{\varphi_7}^{0, \alpha}: &D_7 \inplus \mathbb{C}^{} &\hookrightarrow & E_8\\
&u & \mapsto & \alpha H.
\end{array}
\end{equation}
The embeddings of Eqs. \eqref{jh} and \eqref{jhg} form a basis for the classification of abelian extension of $D_7$ into $E_8$, up to equivalence, described in Theorem \ref{classaa}.  We first present three useful lemmas.

\begin{lemma}\label{pa}
The embeddings $\widetilde{\varphi_7}^{\lambda_1, 1}$ and  $\widetilde{\varphi_7}^{\lambda'_1, 1}$ are not equivalent embeddings of $D_7 \inplus \mathbb{C}^{14}$ into $E_8$.
\end{lemma}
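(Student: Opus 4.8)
The plan is to distinguish the two lifts $\widetilde{\varphi_7}^{\lambda_1,1}$ and $\widetilde{\varphi_7}^{\lambda'_1,1}$ by exhibiting an invariant of the embedding that is preserved under every inner automorphism of $E_8$ but which takes different values on the two embeddings. The most natural such invariant is the subalgebra generated by the image, namely $\langle \varphi_7(D_7), X'''\rangle_{E_8}$ versus $\langle \varphi_7(D_7), Y'\rangle_{E_8}$: if the two embeddings were equivalent via an inner automorphism $\rho$, then since both restrict to $\varphi_7$ on $D_7$ and there is a unique $D_7$ subalgebra up to equivalence (Theorem \ref{dymina}), $\rho$ would have to map one generated subalgebra isomorphically onto the other. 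So first I would compute these two generated subalgebras explicitly, or at least compute an equivalence-invariant quantity attached to each.

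The cleanest route is to compare the subalgebras $\langle \varphi_7(D_7), X'''\rangle_{E_8}$ and $\langle \varphi_7(D_7), Y'\rangle_{E_8}$ as abstract Lie algebras, or to compare their dimensions. Since $X'''$ and $Y'$ both generate a copy of $\mathbb{C}^{14}\cong V_{D_7}(\lambda_1)$ over $\varphi_7(D_7)$, and since from Lemma \ref{notab} we know $[X''',Y']\neq 0$ inside $E_8$ (recall $[\alpha X'''+\beta Y',[Y_8,\alpha X'''+\beta Y']]=-2\alpha\beta X'\neq 0$), I expect that adjoining $X'''$ to $\varphi_7(D_7)$ produces a genuinely different subalgebra than adjoining $Y'$. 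Concretely, I would check that one of the two abelian ideals sits as a highest-weight copy while the other sits as a lowest-weight copy relative to the fixed Cartan ordering of $E_8$, and that these cannot be interchanged by an inner automorphism that fixes $\varphi_7(D_7)$ setwise. The key structural fact to invoke is that the only automorphisms of $E_8$ are inner (it has trivial outer automorphism group), so the only freedom in a would-be equivalence is conjugation.

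An alternative, and probably more robust, approach is to use the restriction of a fixed $E_8$-representation as a linear-equivalence invariant in the spirit of Theorem \ref{dymin2b}, but adapted to the extended algebra: equivalent embeddings of $D_7\inplus\mathbb{C}^{14}$ must yield isomorphic restrictions of $V_{E_8}(\lambda_8)$ as $D_7\inplus\mathbb{C}^{14}$-modules, not merely as $D_7$-modules. Because the two lifts send the abelian generator $u$ to $\alpha X'''$ versus $\alpha Y'$, and these act by nilpotent operators whose images on a suitable weight space differ, I would compute the action of $X'''$ and of $Y'$ on an explicit weight vector of $V_{E_8}(\lambda_8)$ and show the resulting module maps are non-isomorphic as representations of the nonsemisimple algebra. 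This sidesteps any delicate analysis of the full generated subalgebra.

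The main obstacle I anticipate is that the restriction of $V_{E_8}(\lambda_8)$ to $D_7$ is identical for both lifts (indeed $\widetilde{\varphi_7}^{\lambda_1,1}$ and $\widetilde{\varphi_7}^{\lambda'_1,1}$ agree on $D_7$), so the $D_7$-module structure alone cannot separate them; the distinction lives entirely in how the abelian part $\mathbb{C}^{14}$ acts, and an inner automorphism fixing the $D_7$-part need not fix $X'''$ and $Y'$ pointwise. Thus the hard step is ruling out an inner automorphism that normalizes $\varphi_7(D_7)$ and swaps the two $V_{D_7}(\lambda_1)$ isotypic components $[X''']_{\varphi_7(D_7)}$ and $[Y']_{\varphi_7(D_7)}$. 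I would handle this by analyzing the normalizer of $\varphi_7(D_7)$ in $E_8$: any inner automorphism fixing $\varphi_7(D_7)$ acts on the multiplicity space of $V_{D_7}(\lambda_1)$ (which is two-dimensional, spanned by the classes of $X'''$ and $Y'$), and I would show that the induced action respects a nondegenerate pairing or a grading coming from the $[H]_{\varphi_7(D_7)}$-eigenvalue, so that $X'''$ (a positive-root combination) and $Y'$ (a negative-root combination) lie in different eigenspaces and cannot be interchanged. Establishing exactly which subgroup of $GL_2$ is realized on that multiplicity space is the crux of the argument.
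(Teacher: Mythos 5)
The decisive idea in your last paragraph --- that $X'''$ and $Y'$ are separated by their $\mathrm{ad}\,H$-eigenvalues and that an equivalence must respect this grading --- is exactly how the paper argues, but your proposal leaves that very step open (you yourself call it ``the crux''), and the route you sketch for closing it analyzes the wrong group. If $\rho\circ\widetilde{\varphi_7}^{\lambda_1,1}=\widetilde{\varphi_7}^{\lambda'_1,1}$, then because both lifts restrict to the \emph{same} embedding $\varphi_7$, the automorphism $\rho$ fixes $\varphi_7(D_7)$ \emph{pointwise}; hence it commutes with $\mathrm{ad}\,\varphi_7(D_7)$ and preserves each isotypic component in \eqref{hhhhj}, so in particular $\rho(X'')=\alpha X''$ (the unique $V_{D_7}(\lambda_7)$ component) and $\rho(H)=\beta H$ (the unique trivial component). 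Applying $\rho$ to $[H,X'']=X''$ forces $\beta=1$, and then $\rho([H,X'''])=\rho(2X''')=2Y'$ while $[\rho(H),\rho(X''')]=[H,Y']=-2Y'$, the desired contradiction. By contrast, your plan to determine which subgroup of $GL_2$ is realized on the two-dimensional multiplicity space by inner automorphisms that merely \emph{normalize} $\varphi_7(D_7)$ cannot succeed: since $-1$ lies in the Weyl group of $E_8$, there is an inner automorphism of $E_8$ carrying every root vector to one for the opposite root; it maps $\varphi_7(D_7)$ onto itself (inducing an outer automorphism of it --- this is consistent with $\varphi_7\sim\varrho_7$, Eq. \eqref{w3}) and it genuinely interchanges $[X''']_{\varphi_7(D_7)}$ and $[Y']_{\varphi_7(D_7)}$. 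So the setwise normalizer \emph{does} contain grading-reversing swaps, and only the pointwise condition, which your write-up never isolates (you slide between ``fixing the $D_7$-part'' and ``normalizes $\varphi_7(D_7)$''), rules them out.

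Two further remarks. The invariant you lead with is empty: $\langle\varphi_7(D_7),X'''\rangle_{E_8}$ and $\langle\varphi_7(D_7),Y'\rangle_{E_8}$ are simply the images of the two embeddings, both isomorphic as abstract Lie algebras to $D_7\inplus\mathbb{C}^{14}$ and both of dimension $91+14=105$; worse, the grading-reversing inner automorphism above conjugates one subalgebra onto the other, so no subalgebra-level invariant (isomorphism type, dimension, or even conjugacy class of the image) can separate these embeddings --- the distinction lives entirely at the level of lifts of the fixed $\varphi_7$, which is why Theorem \ref{dymina} alone gives you nothing here. Your alternative route via $V_{E_8}(\lambda_8)$ restricted as a $D_7\inplus\mathbb{C}^{14}$-module is a legitimate invariant in principle, but you do not verify the required non-isomorphism, and any such verification again comes down to the same $\pm2$ eigenvalue asymmetry that the short computation above handles directly.
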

\begin{proof}
By way of contradiction, suppose $\widetilde{\varphi_7}^{\lambda_1,1} \sim \widetilde{\varphi_7}^{\lambda'_1,1}$.    Let $\rho: E_8 \rightarrow E_8$ be an inner automorphism of $E_8$ such that 
$\rho \circ \widetilde{\varphi_7}^{\lambda_1,1} = \widetilde{\varphi_7}^{\lambda'_1,1}$, which implies 
\begin{equation}\label{pqa}
\begin{array}{llllllllll}
\rho(X''')&=& Y'.
\end{array}
\end{equation}
The automorphism $\rho$ must send highest weight vectors to highest weight vectors of equal weight with respect to the adjoint action of $\varphi(D_7)$, hence 
\begin{equation}\label{kcx}
\begin{array}{llllllllll}
\rho(X'')&=& \alpha X'', & \rho(H) = \beta H, 
\end{array}
\end{equation}
for 
some 
$\alpha, \beta \in \mathbb{C}^*$.  Eq. \eqref{kcx} yields
\begin{equation}
\begin{array}{lllllll}
\displaystyle \rho([H,X''])&=&\rho(X'')&=&\alpha X'',\\
\displaystyle [\rho(H),\rho(X'')]&=& \alpha \beta X''.
\end{array}
\end{equation}
Hence, $\beta=1$ so that $\rho(H)=H$.  Eq. \eqref{pqa} then yields
\begin{equation}
\begin{array}{lllllll}
\displaystyle \rho([H,X'''])&=&\rho(2X''')&=&2Y',\\
\displaystyle [\rho(H),\rho(X''')]&=& [H,Y']&=&-2Y',
\end{array}
\end{equation}
a contradiction to $\rho$ being a Lie algebra homomorphism.
  \end{proof}

\begin{lemma}\label{paa}
The embeddings $\widetilde{\varphi_7}^{\lambda_1,1}$ and  $\widetilde{\varphi_7}^{\lambda_1,\alpha}$  are equivalent for all $\alpha \in \mathbb{C}^*$.  The embeddings $\widetilde{\varphi_7}^{\lambda'_1,1}$ and  $\widetilde{\varphi_7}^{\lambda'_1,\alpha}$  are equivalent for all $\alpha \in \mathbb{C}^*$.   
\end{lemma}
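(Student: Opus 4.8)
The plan is to exhibit, for each $\alpha \in \mathbb{C}^*$, an explicit inner automorphism of $E_8$ realizing the asserted equivalence. Observe that $\widetilde{\varphi_7}^{\lambda_1,1}$ and $\widetilde{\varphi_7}^{\lambda_1,\alpha}$ both restrict to $\varphi_7$ on $D_7$ and differ only in that the highest weight vector $u$ of $\mathbb{C}^{14}$ is sent to $X'''$ versus $\alpha X'''$. Hence it suffices to produce an inner automorphism $\rho : E_8 \rightarrow E_8$ that fixes $\varphi_7(D_7)$ pointwise and satisfies $\rho(X''') = \tfrac{1}{\alpha} X'''$; then $\widetilde{\varphi_7}^{\lambda_1,1} = \rho \circ \widetilde{\varphi_7}^{\lambda_1,\alpha}$, giving the equivalence (which is symmetric in any case).

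First I would take as candidate $\rho = \exp(t\,\ad H)$ for a suitable $t \in \mathbb{C}$, where $H$ is the Cartan element of Eq.~\eqref{ghyz}. The key structural fact is that $[H]_{\varphi_7(D_7)} \cong V_{D_7}(0)$ in the decomposition \eqref{hhhhj}, i.e. $H$ spans a copy of the trivial representation and therefore centralizes $\varphi_7(D_7)$. Consequently $\exp(t\,\ad H)$ fixes $\varphi_7(D_7)$ pointwise for every $t$. Since $\ad H$ is semisimple and acts on each $E_8$-weight space by an integer scalar, $\exp(t\,\ad H)$ is a well-defined diagonalizable operator; being the exponential of an inner derivation of $E_8$ (indeed lying in the maximal torus of the adjoint group), it is an inner automorphism.

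It then remains only to fix the scaling on $X'''$. The bracket $[H,X''']=2X'''$ has already been computed in the proof of Lemma~\ref{pa}, so $\exp(t\,\ad H)(X''') = e^{2t}X'''$. As the complex exponential surjects onto $\mathbb{C}^*$, I can choose $t$ with $e^{2t} = \tfrac{1}{\alpha}$, yielding the desired $\rho$. The second assertion is proved identically, replacing $X'''$ by $Y'$, using $[H,Y']=-2Y'$ (also recorded in the proof of Lemma~\ref{pa}) and solving $e^{-2t} = \tfrac{1}{\alpha}$.

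No substantial obstacle arises: the two ingredients needed are that $H$ centralizes $\varphi_7(D_7)$ and that $\ad H$ has a nonzero eigenvalue on $X'''$ (resp. $Y'$), both of which are already established above. The only point deserving an explicit line is that $\exp(t\,\ad H)$ is a bona fide inner automorphism of $E_8$ rather than merely a linear rescaling, which follows from $\ad H$ being a semisimple element of $\ad E_8$ with a convergent (diagonalizable) exponential.
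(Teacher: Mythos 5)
Your proof is correct and is essentially the paper's own argument: the paper conjugates by the torus automorphism $\rho_\beta$ fixing $H_i,X_i,Y_i$ for $2\leq i\leq 8$ and scaling $X_1\mapsto \beta X_1$, $Y_1\mapsto \frac{1}{\beta}Y_1$ with $\beta^2=\alpha$, which is exactly your $\exp(t\,\mathrm{ad}\,H)$ with $\beta=e^t$, since $\alpha_1(H)=1$ and $\alpha_i(H)=0$ for $i\geq 2$. Your presentation via $[H,X''']=2X'''$ and $[H,Y']=-2Y'$ makes the pointwise fixing of $\varphi_7(D_7)$ slightly more transparent, but the automorphism and the computation are the same.
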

\begin{proof}
 For $\beta \in \mathbb{C}^*$, define an inner automorphism $\rho_\beta:E_8 \rightarrow E_8$ as follows:
\begin{equation}
\begin{array}{llllllllllll}
& H_{1} & \mapsto & H_{1}, & X_{1} & \mapsto & \beta X_{1},& Y_{1} & \mapsto & \frac{1}{\beta}Y_{1},\\
& H_{i} & \mapsto & H_{i}, & X_{i} & \mapsto &  X_{i},& Y_{i} & \mapsto & Y_{i},\\
\end{array}
\end{equation}
where $2\leq i\leq 8$.  We then have $\rho_{\beta} \circ \widetilde{\varphi_7}^{\lambda_1,1}=\widetilde{\varphi_7}^{\lambda_1,\alpha}$, $\rho_{\beta} \circ \widetilde{\varphi_7}^{\lambda'_1,\alpha}=\widetilde{\varphi_7}^{\lambda'_1,1}$, for $\beta^2 = \alpha$.  Hence $\widetilde{\varphi_7}^{\lambda_1,1}\sim \widetilde{\varphi_7}^{\lambda_1,\alpha}$ and $\widetilde{\varphi_7}^{\lambda'_1,1}\sim \widetilde{\varphi_7}^{\lambda'_1,\alpha}$.
\end{proof}

\begin{lemma}\label{paaa}
The embeddings $\widetilde{\varphi_7}^{0, \alpha}$ and  $\widetilde{\varphi_7}^{0, \beta}$ are equivalent if and only if $\alpha=\beta$. 
\end{lemma}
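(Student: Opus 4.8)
The plan is to reduce the claimed equivalence to the value of a single scalar by which an inner automorphism acts on the one-dimensional centralizer $\mathbb{C}H$, and then to force that scalar to be $1$ using the Weyl group of $E_8$. The direction $\alpha=\beta \Rightarrow \widetilde{\varphi_7}^{0,\alpha}\sim\widetilde{\varphi_7}^{0,\beta}$ is trivial (take the identity automorphism). For the converse, suppose $\rho:E_8\to E_8$ is inner with $\rho\circ\widetilde{\varphi_7}^{0,\beta}=\widetilde{\varphi_7}^{0,\alpha}$. Restricting to $D_7$ gives $\rho\circ\varphi_7=\varphi_7$, so $\rho$ fixes $\varphi_7(D_7)$ pointwise, while evaluating on $u$ gives $\rho(\beta H)=\alpha H$. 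Hence it suffices to show that the resulting equation $\rho(H)=(\alpha/\beta)H$ forces $\alpha/\beta=1$.

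First I would pin down the centralizer. Since $\rho$ fixes $\varphi_7(D_7)$ pointwise, $\rho(H)$ again commutes with $\varphi_7(D_7)$; by the decomposition \eqref{hhhhj} the trivial summand $V_{D_7}(0)$ has multiplicity one and is spanned by $H$, so the centralizer of $\varphi_7(D_7)$ in $E_8$ is exactly $\mathbb{C}H$ and $\rho(H)=cH$ for some $c\in\mathbb{C}^*$. The structural input I would use next is that $\h_{D_7}:=\mathrm{span}(H_2,\dots,H_8)$, the image under $\varphi_7$ of the Cartan subalgebra of $D_7$, together with $H$ spans a full Cartan subalgebra $\h$ of $E_8$ (the coefficient of $H_1$ in $H$ is nonzero). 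Because $\rho$ fixes $\h_{D_7}$ pointwise and maps $H$ to $cH$, it preserves $\h$; as an inner automorphism preserving a Cartan subalgebra, it acts on $\h$ as an element $w$ of the Weyl group $W(E_8)$, with $w|_{\h_{D_7}}=\mathrm{id}$ and $w(H)=cH$.

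The heart of the argument, and the step I expect to be the main obstacle, is to show $w=\mathrm{id}$ (whence $c=1$). Here $\h_{D_7}$ is a hyperplane in $\h$, and $w$ is an orthogonal transformation for the Killing form fixing it pointwise, so $w$ is either the identity or the orthogonal reflection across $\h_{D_7}$. A reflection lying in $W(E_8)$ must be $s_\beta$ for some root $\beta$, and the mirror of $s_\beta$ is $\{h\in\h:\beta(h)=0\}$; equating this with $\h_{D_7}$ would yield a root $\beta$ of $E_8$ vanishing on $\h_{D_7}$. I would rule this out by computing, from \eqref{hhhhj}, the zero-weight space of $\h_{D_7}$ acting on $E_8$: the standard representation $V_{D_7}(\lambda_1)$ and the half-spin representations $V_{D_7}(\lambda_6)$, $V_{D_7}(\lambda_7)$ carry no zero weight, so this zero-weight space has dimension $7+1=8=\dim\h$ and contains no root vector. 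Thus no root of $E_8$ vanishes on $\h_{D_7}$, the reflection option is excluded, $w=\mathrm{id}$, and $c=1$, i.e.\ $\alpha=\beta$.
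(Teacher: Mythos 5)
Your proof is correct, but it takes a genuinely different route from the paper's. The paper argues entirely inside the Chevalley basis: from $\rho(H)=\tfrac{\beta}{\alpha}H$ and the fact that $\rho$ fixes $H_2,\dots,H_8$, it solves explicitly for $\rho(H_1)$ (Eq.~\eqref{zz3}); multiplicity one of $V_{D_7}(\lambda_6)$ in \eqref{hhhhj} forces $\rho(Y_1)=\gamma Y_1$, and comparing $\rho([H_1,Y_1])=-2\gamma Y_1$ with $[\rho(H_1),\rho(Y_1)]=-\tfrac{\gamma}{4}\bigl(\tfrac{\beta}{\alpha}+7\bigr)Y_1$ forces $\tfrac{\beta}{\alpha}=1$. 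You instead work on the Cartan subalgebra $\h=\h_{D_7}\oplus\mathbb{C}H$ and invoke three structural facts: an inner automorphism preserving $\h$ acts on it by an element of $W(E_8)$ (standard, but worth a citation, e.g.\ \cite{humphreys}); an isometry of the Killing form fixing the nondegenerate hyperplane $\h_{D_7}$ pointwise is the identity or the reflection across it; and every reflection in a Weyl group is a root reflection, so the reflection case would produce a root of $E_8$ vanishing on $\h_{D_7}$. Your zero-weight count correctly excludes this: in \eqref{hhhhj} the adjoint summand $V_{D_7}(\lambda_2)$ contributes its Cartan (dimension $7$), the standard and half-spin summands have no zero weights, and the trivial contributes $1$, so the centralizer of $\h_{D_7}$ is exactly $\h$ and contains no root vector; hence $w=\mathrm{id}$ and $\alpha=\beta$. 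What the paper's computation buys is self-containedness, in keeping with its GAP-assisted style. What your argument buys is conceptual clarity, a stronger conclusion (any inner automorphism fixing $\varphi_7(D_7)$ pointwise fixes $H$, not merely rescales it by $1$ for the particular $\rho$ at hand), and uniformity: the identical count ($5+1=6$ applied to \eqref{iiii}) disposes of Lemma \ref{paaccc} for $D_5\inplus\mathbb{C}$ in $E_6$. Instructively, your method also explains why the $E_7$ case is different: there the zero-weight space of $\h_{D_6}$ in \eqref{hhhh} is $6+3=9>7$, so roots of $E_7$ \emph{do} vanish on $\h_{D_6}$ (with root vectors $X'''$, $Y'$), the centralizer genuinely moves, and one gets the $\mathfrak{sl}_2$-family of Lemma \ref{pj2} and the orbit description of Theorem \ref{classaab} rather than rigidity.
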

\begin{proof}
Let $\widetilde{\varphi_7}^{0, \alpha} \sim \widetilde{\varphi_7}^{0, \beta}$, and let $\rho$ be an automorphism of 
$E_8$ such that $\rho \circ \widetilde{\varphi_7}^{0, \alpha} = \widetilde{\varphi_7}^{0, \beta}$, which implies
\begin{equation}\label{zz1}
\begin{array}{lll}
\rho(H) = \frac{\beta}{\alpha} H=& \\
\frac{\beta}{\alpha}(4H_1+5 H_2+ 7H_3 +10H_4 + 
 8H_5  + 6H_6 
 +4H_7 +  2H_8).
 \end{array}
\end{equation}
Since $\rho$ is a Lie algebra homomorphism, and $\rho(H_i)=H_i$ for $2 \leq i \leq 8$, we also have
\begin{equation}\label{zz2}
\begin{array}{lllll}
\rho(H) = &\\4 \rho(H_1)+5 H_2+ 7H_3 +10H_4 + 
 8H_5  + 6H_6 
 +4H_7 +  2H_8.
\end{array}
\end{equation}
Eqs. \eqref{zz1} and \eqref{zz2} then yield
\begin{equation}\label{zz3}
\begin{array}{llllllllll}
\rho(H_1)= \frac{\beta}{\alpha} H_1 + \\ \frac{(\frac{\beta}{\alpha} -1)}{4}(5 H_2+ 7H_3 +10H_4 + 
 8H_5  + 6H_6 
 +4H_7 +  2H_8).  
 \end{array}
\end{equation}
The automorphism $\rho$ must send   highest weight vectors, with respect to the adjoint action of $\varphi_7(D_7)$,  to highest weight vectors of the same weight.    Hence, there is a $\gamma \in \mathbb{C}^*$ such that
\begin{equation}\label{zz4}
\rho(Y_1) = \gamma Y_1.
\end{equation}
Using Eq. \eqref{zz3},
\begin{equation}\label{zz5}
\begin{array}{llllllllll}
\displaystyle \rho( [H_1, Y_1])&=&\rho( -2Y_1)&=& -2\gamma Y_1,\\
 \displaystyle [\rho(H_1),\rho(Y_1)  ]&=& \gamma [\rho(H_1),Y_1]&= &-\frac{\gamma}{4}(\frac{\beta}{\alpha}+7)Y_1.
\end{array}
\end{equation}
Since $\rho$ is a Lie algebra homomorphism, Eq. \eqref{zz5} implies $\frac{\beta}{\alpha}=1$.  Hence, if $\widetilde{\varphi_7}^{0, \alpha} \sim \widetilde{\varphi_7}^{0, \beta}$, then $\alpha=\beta$.  The opposite implication is obvious.
\end{proof}

\begin{theorem}\label{classaa}
The only abelian extensions of $D_7$ that may be embedded into $E_8$ are 
$
D_7 \inplus V_{D_{7}}(\lambda_{1}) \cong 
D_7 \inplus \mathbb{C}^{14}$ 
and 
$
D_7 \inplus V_{D_{7}}(0) \cong 
D_7 \inplus \mathbb{C}^{}$.  
The embeddings $\widetilde{\varphi_7}^{\lambda_1, 1}$ and $\widetilde{\varphi_7}^{\lambda'_1,1}$ are a complete set of inequivalent embeddings of $D_7 \inplus \mathbb{C}^{14}$ into $E_8$.  
There is an infinite family of inequivalent embeddings of $D_7 \inplus \mathbb{C}$ into $E_8$, which contains all embeddings.  The infinite family is parameterized by a single continuous parameter:  $\widetilde{\varphi_7}^{0, \alpha}$, $\alpha \in \mathbb{C}^*$.  These results are displayed in Table \ref{end}.
\end{theorem}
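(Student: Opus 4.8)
The plan is to assemble the preceding lemmas, after first reducing the classification of \emph{all} embeddings of an abelian extension to the classification of \emph{lifts} of the single embedding $\varphi_7$. The key reduction is this: by Theorem \ref{dymin}, $\varphi_7$ is the unique embedding of $D_7$ into $E_8$ up to equivalence, so given any embedding $\psi$ of $D_7 \inplus V$ into $E_8$, its restriction $\psi|_{D_7}$ is equivalent to $\varphi_7$; after composing $\psi$ with a suitable inner automorphism of $E_8$ we may assume $\psi|_{D_7} = \varphi_7$, i.e.\ that $\psi$ is a lift of $\varphi_7$. Thus it suffices to classify the lifts of $\varphi_7$ up to equivalence, and the determination of which extensions admit a lift at all is exactly the content of Proposition \ref{mhh}.

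First I would treat $D_7 \inplus \mathbb{C}^{14}$. Since a lift $\widetilde{\varphi}^{\mathbb{C}^{14}}$ restricts to $\varphi_7$ on $D_7$, it is a $\varphi_7(D_7)$-module homomorphism on $V_{D_7}(\lambda_1)$, hence is completely determined by the image of a highest weight vector $u$; moreover $u$ must map to a highest weight vector of an abelian $\varphi_7(D_7)$-submodule of $E_8$ isomorphic to $\mathbb{C}^{14}$. By Proposition \ref{mhh} these highest weight vectors are exactly $\alpha X'''$ and $\alpha Y'$ for $\alpha \in \mathbb{C}^*$, so every lift is one of $\widetilde{\varphi_7}^{\lambda_1,\alpha}$ or $\widetilde{\varphi_7}^{\lambda'_1,\alpha}$. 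Lemma \ref{paa} collapses each of these one-parameter families to a single equivalence class, represented by $\widetilde{\varphi_7}^{\lambda_1,1}$ and $\widetilde{\varphi_7}^{\lambda'_1,1}$ respectively, and Lemma \ref{pa} shows these two representatives are inequivalent. Hence $\widetilde{\varphi_7}^{\lambda_1,1}$ and $\widetilde{\varphi_7}^{\lambda'_1,1}$ form a complete set of inequivalent embeddings of $D_7 \inplus \mathbb{C}^{14}$.

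Next I would treat $D_7 \inplus \mathbb{C}$ identically: a lift is determined by the image of a nonzero weight-zero vector $u \in V_{D_7}(0)$, which by Proposition \ref{mhh} must be $\alpha H$ for some $\alpha \in \mathbb{C}^*$, so every lift is $\widetilde{\varphi_7}^{0,\alpha}$. Lemma \ref{paaa} then shows $\widetilde{\varphi_7}^{0,\alpha} \sim \widetilde{\varphi_7}^{0,\beta}$ if and only if $\alpha = \beta$, so the family $\widetilde{\varphi_7}^{0,\alpha}$, $\alpha \in \mathbb{C}^*$, consists of pairwise inequivalent embeddings and exhausts all of them, giving an infinite family in one continuous parameter as claimed.

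The proof is essentially bookkeeping, since the substantive work is carried out in Lemmas \ref{pa}, \ref{paa}, and \ref{paaa}. The only genuinely conceptual step is the opening reduction to lifts of $\varphi_7$, and the point to be careful about is that composing $\psi$ with an inner automorphism to normalize $\psi|_{D_7}$ to $\varphi_7$ does not disturb the equivalence class of $\psi$, so no embeddings are lost or spuriously identified in passing from arbitrary embeddings to lifts. I expect no computational obstacle beyond verifying that this normalization is legitimate.
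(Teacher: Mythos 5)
Your proposal is correct and follows essentially the same route as the paper's proof: reduce to lifts of $\varphi_7$ via Theorem \ref{dymin}, invoke Proposition \ref{mhh} to see that the lifts are exactly those of Eqs.\ \eqref{jh} and \eqref{jhg}, and then apply Lemmas \ref{pa}, \ref{paa}, and \ref{paaa}. Your explicit justification of the normalization step (that replacing $\psi$ by $\rho^{-1}\circ\psi$ to force $\psi|_{D_7}=\varphi_7$ neither loses nor conflates equivalence classes) is a detail the paper leaves implicit, and is a welcome addition.
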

\begin{proof}
The only abelian extensions of $D_7$ that may be embedded into $E_8$ are $D_7 \inplus \mathbb{C}^{14}$, and $D_7 \inplus \mathbb{C}^{}$ (a repetition of Proposition \ref{mhh}).  

Any embedding of an abelian extension of $D_7$ into $E_8$ restricted to $D_7$ is equivalent to $\varphi_7$  by Theorem \ref{dymin}.  Further, Eqs. \eqref{jh} and \eqref{jhg} list all possible lifts of $\varphi_7$  to the permissible abelian extensions of $D_7$.  The result then follows from Lemmas \ref{pa}, \ref{paa}, and \ref{paaa}.
\end{proof}

\begin{table} [!h]\renewcommand{\arraystretch}{1.6} \caption{Classification of embeddings of abelian extensions of $D_7$ into $E_8$ up to equivalence. } \label{end}\begin{center}
\begin{tabular}{|c|c|c|clclcl} 
\hline
Abelian Extension $D_7 \inplus V$ &  Embedding $D_7 \inplus V \hookrightarrow E_8$\\
\hline \hline
$
D_7 \inplus V_{D_{7}}(\lambda_{1}) \cong 
D_7 \inplus \mathbb{C}^{14}$& $\widetilde{\varphi_7}^{\lambda_1,1}$, ~~   $\widetilde{\varphi_7}^{\lambda'_1, 1}$\\ 
\hline 
$
D_7 \inplus V_{D_{7}}(0) \cong 
D_7 \inplus \mathbb{C}^{}$ &  $\widetilde{\varphi_7}^{0, \alpha}$, ~~ $\alpha \in \mathbb{C}^*$\\
\hline
\end{tabular}\end{center}
\end{table}

\begin{remark}
The Euclidean algebra $\mathfrak{e}(n)$ of isometries of $n$-dimensional Euclidean space is defined as 
$\mathfrak{e}(n) \cong \mathfrak{so}(n,\mathbb{C}) \inplus \mathbb{C}^n$.  
We thus have 
$\mathfrak{e}(14) \cong D_7 \inplus \mathbb{C}^{14}$.
Hence, one consequence of Theorem \ref{classaa}  is a classification of the embeddings of the Euclidean algebra $\mathfrak{e}(14)$ into $E_8$, up to equivalence.
\end{remark}

\subsection{Embeddings of the abelian extensions of  ${D}_6$ into $E_{7}$}

We begin by constructing lifts of the embeddings $\varphi_6: D_6 \hookrightarrow E_7$ and $\varrho_6: D_6 \hookrightarrow E_7$ 
to $
D_6 \inplus V_{D_{6}}(0) \cong 
D_6 \inplus \mathbb{C}^{}$.  Let $u$ be a 
(highest weight) 
vector of the $D_6$-representation $V_{D_6}(0)\cong \mathbb{C}^{}$.  By Proposition \ref{mhhh}, the following then define all possible lifts of the  embeddings $\varphi_6$ and $\varrho_6$  to $D_6 \inplus \mathbb{C}^{}$,  for  $\alpha, \beta, \gamma \in \mathbb{C}$, not all zero:

\begin{equation}\label{ggyy}
\begin{array}{lll}
\begin{array}{ccccccccccc}
\widetilde{\varphi_6}^{0,(\alpha,\beta, \gamma)}: &D_6 \inplus \mathbb{C}^{} &\hookrightarrow & E_7\\
&u & \mapsto &  \alpha Y' + \beta X''' + \gamma H,
\end{array}\\
\begin{array}{cccccccccccc}
\widetilde{\varrho_6}^{0,(\alpha,\beta, \gamma)}: &D_6 \inplus \mathbb{C}^{} &\hookrightarrow & E_7\\
&u & \mapsto &  \alpha Y' + \beta X''' + \gamma H.
\end{array}
\end{array}
\end{equation}

The embeddings of Eq. \eqref{ggyy} form a basis for the classification of abelian 
extensions
of $D_6$ into $E_7$, up to equivalence, described in Theorem \ref{classaab}.  We first present a useful lemma.

\begin{lemma}\label{pj2}
Let $\rho$ be an automorphism of $E_7$  which fixes $X_i, Y_i$, and $H_i$ for $2 \leq i \leq 7$.  Then
\begin{equation}
\begin{array}{lllllllllll}
\rho(Y') &=& c^2Y'-cdH-d^2X''', \\
\rho(X''') &=& a^2X'''+abH-b^2Y',\\
 \rho(H)&=&(ac+bd)H+2adX'''-2bcY', 
\end{array}
\end{equation}
where $a, b, c, d \in \mathbb{C}$ such that $ac-bd=1$.
\end{lemma}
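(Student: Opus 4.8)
The key observation is that the three elements $Y'$, $X'''$, and $H$ span a subalgebra of $E_7$ isomorphic to $\s$. First I would verify this explicitly: from the structure constants one checks that, after suitable normalization, $\{X''', Y', H\}$ (or rather the pair $X''', Y'$ together with $H$ as the coroot) close under bracket into a copy of $\mathfrak{sl}(2,\mathbb{C})$, with relations of the schematic form $[H,X''']=2X'''$, $[H,Y']=-2Y'$, and $[X''',Y']$ proportional to $H$. Indeed equation \eqref{hhhh} shows these three vectors are exactly the weight-zero highest weight vectors of the three trivial summands $3V_{D_6}(0)$, and Lemma \ref{notabbc} already records $[Y',X''']\neq 0$, $[Y',H]\neq 0$, $[X''',H]\neq 0$, so the span is a nonabelian three-dimensional subalgebra, forcing it to be $\s$. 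The plan is then to reduce the whole statement to a computation inside this $\s$.

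Next I would argue that any automorphism $\rho$ fixing $X_i,Y_i,H_i$ for $2\leq i\leq 7$ must preserve this $\s$ and act on it as an inner automorphism of $\s$. Because $\rho$ fixes $\varphi_6(D_6)=[X']_{\varphi_6(D_6)}$ pointwise (the $D_6$ copy is generated by those fixed Chevalley generators, per \eqref{idddd}), $\rho$ commutes with the adjoint $D_6$-action, hence permutes the isotypic components in \eqref{hhhh}. In particular it sends the weight-zero $D_6$-highest weight vectors to weight-zero $D_6$-highest weight vectors, so by Lemma \ref{notabb} it maps $\mathrm{span}\{Y',X''',H\}$ into itself. Thus $\rho$ restricts to a linear automorphism of our $\s$ that preserves its Lie bracket, i.e.\ an automorphism of $\s$; since all automorphisms of $\s$ are inner (the Dynkin diagram $A_1$ has no symmetry), $\rho|_{\s}=\operatorname{Ad}(g)$ for some $g\in SL(2,\mathbb{C})$.

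The final step is bookkeeping: write $g=\begin{pmatrix} a & b \\ d & c\end{pmatrix}$ with $\det g = ac-bd=1$ in the basis adapted to $\{X''',Y',H\}$, and compute the adjoint action of $g$ on the standard $\s$-triple. Under the identification sending $X'''$ to a raising operator, $Y'$ to a lowering operator, and $H$ to the coroot, the formulas $\operatorname{Ad}(g)$ produces are precisely the stated quadratic expressions in $a,b,c,d$; matching the coefficient of $H$ in $\rho(H)$ gives the $ac+bd$ term, the cross terms give the $2adX'''$ and $-2bcY'$ contributions, and similarly for $\rho(Y')$ and $\rho(X''')$. I expect the main obstacle to be purely a normalization issue: pinning down the correct scalar identifications so that the $\s$-triple is genuinely standard (satisfying $[e,f]=h$, $[h,e]=2e$, $[h,f]=-2f$) is what makes the coefficients come out as $c^2,-cd,-d^2$ etc.\ with the constraint $ac-bd=1$ rather than some rescaled version. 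Once the triple is correctly normalized, the three displayed identities are just the entries of the $3\times 3$ matrix of $\operatorname{Ad}(g)$ on $\mathfrak{sl}(2,\mathbb{C})$ in the adjoint representation, and the result follows.
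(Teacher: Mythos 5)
Your proposal is correct, and it takes a genuinely different route from the paper. The paper never invokes the $\s$-structure explicitly: it lets $\rho$ act on the highest weight vectors $Y_1$ and $X''$ of the isotypic component $2V_{D_6}(\lambda_5)$ in \eqref{hhhh}, writing $\rho(Y_1)=cY_1+dX''$ and $\rho(X'')=aX''+bY_1$; it then transports this to the lowest weight vectors $X_1$ and $Y''$ by applying a fixed string of lowering operators $Y_{a_i}$ (which $\rho$ fixes, Eq.\ \eqref{gfd}); it obtains $\rho(X''')$ and $\rho(Y')$ from $X'''=[X_1,X'']$ and $Y'=[Y_1,Y'']$; it extracts the constraint $ac-bd=1$ from consistency with $[Y_1,X''']=X''$; and it finally computes $\rho(H)$ from $H=[X''',Y']$. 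Your route --- $\rho$ fixes $\varphi_6(D_6)$ pointwise by \eqref{idddd}, hence commutes with the adjoint $D_6$-action, hence preserves the zero-weight highest weight space of Lemma \ref{notabb}, which is a subalgebra isomorphic to $\s$, on which $\rho$ must act as $\operatorname{Ad}(g)$ for some $g\in SL(2,\mathbb{C})$ since all automorphisms of $\s$ are inner --- is shorter and more conceptual: it explains why the quadratic formulas are exactly the matrix of the adjoint representation of $SL(2,\mathbb{C})$ (one checks that $g=\begin{pmatrix} a & -d \\ -b & c \end{pmatrix}$ with $\det g=ac-bd=1$ reproduces the three displayed identities), and it avoids the explicit root-vector computations. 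What the paper's computation buys in exchange is the explicit matching between the parameters $(a,b,c,d)$ and the action of $\rho$ on $X_1,Y_1,X'',Y''$, which is reused verbatim in the converse direction of Theorem \ref{classaab} (Eq.\ \eqref{u14c}) to \emph{construct} an inner automorphism realizing prescribed parameters; under your approach that converse would need a separate realization step, e.g.\ exponentiating $\operatorname{ad}$ of elements of the triple, which centralize $\varphi_6(D_6)$.

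Two cautions. First, your parenthetical inference ``nonabelian three-dimensional subalgebra, forcing it to be $\s$'' is false as stated: the Heisenberg algebra is a nonabelian three-dimensional Lie algebra (and occurs as a subalgebra of $E_7$), so nonabelianity alone decides nothing. Second, the nonvanishing brackets recorded in Lemma \ref{notabbc} do not even show that $\operatorname{span}\{Y',X''',H\}$ is closed under bracket. Consequently the explicit verification you promise at the outset --- that $[H,X''']=2X'''$, $[H,Y']=-2Y'$, $[X''',Y']=H$ (indeed $X'''$ and $Y'$ are root vectors for $\pm$ the highest root of $E_7$, whose multiplicities $(2,2,3,4,3,2,1)$ match the word in \eqref{ghyy}, and $H$ is the corresponding coroot) --- is not an optional normalization check but the load-bearing step of your argument; with it in place, the proof is complete.
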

\begin{proof}
Let $\rho: E_7 \rightarrow E_7$ be an automorphism of $E_7$ which  fixes $X_i, Y_i$, and $H_i$ for $2 \leq i \leq 7$.  
A highest weight vector of $E_7$ must be sent to a highest weight vector of $E_7$ of equal weight with respect to the adjoint action of $\varphi_6(D_6)$.  Hence, referring to Eq. \eqref{hhhh},
\begin{equation}\label{u3}
\begin{array}{llllllll}
\rho(Y_1) &=& c Y_1 + d X'',  &
\rho(X'') &=& a X'' +b Y_1.
\end{array}
\end{equation}
Computation shows that there exists a sequence $a_1, a_2,...,a_k$, where $2\leq a_i \leq 7$ for each $i$, such that
\begin{equation}\label{gfd}
\begin{array}{lllll}
\displaystyle [[[[[X'', Y_{a_1}],Y_{a_2}],Y_{a_3}]\cdots], Y_{a_k}]  &=&-X_1, ~\text{and}\\ 
\displaystyle  [[[[[Y_1, Y_{a_1}],Y_{a_2}],Y_{a_3}]\cdots], Y_{a_k}]  &=&-Y_{7,6,5,4,3,2,4,5,6,1,3,4,5,2,4,3}. 
 \end{array}
\end{equation}
Let us define $Y''= Y_{7,6,5,4,3,2,4,5,6,1,3,4,5,2,4,3}$.   Note that the elements $X_1$ and $Y''$ are lowest weight vectors with respect to the adjoint action of $\varphi_6(D_6)$.  Equations  \eqref{u3} and \eqref{gfd}, together with the fact that $\rho(Y_{a_i})=Y_{a_i}$ for $2\leq a_i \leq 7$,  then imply
\begin{equation}\label{u4}
\begin{array}{llllllllll}
\rho(Y'') &=& c Y'' +d X_1, & 
\rho(X_1) &=& aX_1 +b Y''.
\end{array}
\end{equation}
Note that $[X_1,X'']=X'''$ and  $[Y_1,Y'']=Y'$, so that Eqs. \eqref{u3} and \eqref{u4} imply
\begin{equation}\label{ut4}
\begin{array}{llllllllll}
\rho(X''') &=& a^2X'''+abH-b^2Y', \\
\rho(Y') &=& c^2Y'-cdH-d^2X'''.
\end{array}
\end{equation}
Note that
\begin{equation}\label{asxt555}
\begin{array}{lllll}
\displaystyle [\rho(Y_1), \rho(X''')]&=&[cY_1+dX'',a^2X'''+abH-b^2Y']\\
\displaystyle &=&b(ac-bd)Y_1+a(ac-bd)X''.
\end{array}
\end{equation}
Since $[Y_1,X''']=X''$, Eqs. \eqref{u3} and \eqref{asxt555} imply $b(ac-bd)=b$ and $a(ac-bd)=a$.  Since not both $a=0$ and $b=0$, we have
\begin{equation}
ac-bd=1.
\end{equation}
Since $[X''',Y']=H$ and $ac-bd=1$, Eq. \eqref{ut4} implies
\begin{equation}\label{asxt55}
\begin{array}{lllll}
\displaystyle \rho(H)=(ac+bd)H+2adX'''-2bcY'.
\end{array}
\end{equation}
\end{proof}

\begin{theorem}\label{classaab}
The only abelian extension of $D_6$ that may be embedded into $E_7$ is 
$
D_6 \inplus V_{D_{6}}(0)  \cong 
D_6 \inplus \mathbb{C}^{}$. 
Any embedding of $D_6 \inplus \mathbb{C}$ into $E_7$ is equivalent to $\widetilde{\varphi_6}^{0,(\alpha,\beta, \gamma)}$ or $\widetilde{\varrho_6}^{0,(\alpha,\beta, \gamma)}$ for some $\alpha, \beta, \gamma \in \mathbb{C}$, not all zero.  For $\alpha, \beta, \gamma \in \mathbb{C}$, not all zero, the embeddings are classified according to the following rules:

\begin{enumerate}[(a)]
\item  $\widetilde{\varphi_6}^{0,(\alpha,\beta, \gamma)} \nsim \widetilde{\varrho_6}^{0,(\alpha,\beta, \gamma)} ~\text{for all}~~ \alpha, \beta, \gamma$.
\item  $\widetilde{\varphi_6}^{0,(\alpha,\beta, \gamma)} \sim \widetilde{\varphi_6}^{0,(\alpha',\beta', \gamma')}\Leftrightarrow$\\
$  (\alpha', \beta', \gamma' ) =  
 (\alpha  c^2-\beta b^2-2\gamma bc, -\alpha d^2+\beta a^2+2\gamma ad, -\alpha cd +\beta ab+\gamma (ac+bd)), ~\text{for some}~ a,b,c,d \in \mathbb{C},~\text{such that}~ ac-bd=1$.
\item $\widetilde{\varrho_6}^{0,(\alpha,\beta, \gamma)} \sim \widetilde{\varrho_6}^{0,(\alpha',\beta', \gamma')}\Leftrightarrow$ \\
$  (\alpha', \beta', \gamma' ) =  
 (\alpha  c^2-\beta b^2-2\gamma bc, -\alpha d^2+\beta a^2+2\gamma ad, -\alpha cd +\beta ab+\gamma (ac+bd)), ~\text{for some}~ a,b,c,d \in \mathbb{C},~\text{such that}~ ac-bd=1$.
\end{enumerate}
\end{theorem}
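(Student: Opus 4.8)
The plan is to dispatch the two assertions of the theorem in turn. The identification of $D_6 \inplus \mathbb{C}$ as the only admissible abelian extension simply repeats Proposition \ref{mhhh}. To see that every embedding of $D_6 \inplus \mathbb{C}$ into $E_7$ is equivalent to one of the listed lifts, I would note that any such embedding, restricted to $D_6$, is equivalent by Theorem \ref{dymin} to $\varphi_6$ or $\varrho_6$; since Eq. \eqref{ggyy} lists all lifts of $\varphi_6$ and $\varrho_6$ permitted by Proposition \ref{mhhh}, the embedding is equivalent to some $\widetilde{\varphi_6}^{0,(\alpha,\beta,\gamma)}$ or $\widetilde{\varrho_6}^{0,(\alpha,\beta,\gamma)}$.

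For rule (a) I would argue by contradiction: an inner automorphism $\rho$ with $\rho\circ\widetilde{\varphi_6}^{0,(\alpha,\beta,\gamma)}=\widetilde{\varrho_6}^{0,(\alpha,\beta,\gamma)}$ restricts on $D_6$ to $\rho\circ\varphi_6=\varrho_6$, giving $\varphi_6\sim\varrho_6$ and contradicting Eq. \eqref{w2}. Rules (b) and (c) I would prove together, beginning with the forward implications. Suppose $\rho\circ\widetilde{\varphi_6}^{0,(\alpha,\beta,\gamma)}=\widetilde{\varphi_6}^{0,(\alpha',\beta',\gamma')}$; restricting to $D_6$ gives $\rho\circ\varphi_6=\varphi_6$, so $\rho$ fixes $\varphi_6(D_6)$ pointwise, i.e. it fixes $X_i,Y_i,H_i$ for $2\le i\le 7$. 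Lemma \ref{pj2} then describes $\rho$ on $Y',X''',H$, and expanding the intertwining condition $\rho(\alpha Y'+\beta X'''+\gamma H)=\alpha'Y'+\beta'X'''+\gamma'H$ and collecting the coefficients of $Y'$, $X'''$, $H$ yields exactly the stated relations with $ac-bd=1$. Rule (c) is identical: because $\rho_6$ is an automorphism of $D_6$, the images $\varphi_6(D_6)$ and $\varrho_6(D_6)$ are the same subalgebra of $E_7$, so the pointwise stabilizer is the same and Lemma \ref{pj2} applies verbatim.

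The principal obstacle is the reverse implication, since Lemma \ref{pj2} only constrains the automorphisms fixing $X_i,Y_i,H_i$ ($2\le i\le 7$) and does not produce one for each admissible $(a,b,c,d)$. To supply these, I would use Eq. \eqref{labbb}, which shows that $Y'$, $X'''$, and $H$ each generate a trivial $\varphi_6(D_6)$-module; hence $\mathfrak{s}=\langle X''',H,Y'\rangle$ centralizes $\varphi_6(D_6)$ (it is the $A_1$ centralizing the $D_6$ inside $E_7$), and the bracket relations $[X''',Y']=H$, $[H,X''']=2X'''$, $[H,Y']=-2Y'$ make $\mathfrak{s}$ an $\mathfrak{sl}(2,\mathbb{C})$-triple carrying $\{X''',H,Y'\}$ as its adjoint module. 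The inner automorphisms $\exp(\ad Z)$, $Z\in\mathfrak{s}$, then fix $\varphi_6(D_6)$ pointwise and act on $\mathfrak{s}$ through the adjoint action of the corresponding $SL(2,\mathbb{C})$; since this action is given by the degree-two expressions in $a,b,c,d$ that constitute the formulas of Lemma \ref{pj2}, the family of realized transformations coincides with the set of transformations of the stated form having $ac-bd=1$. Concretely I would exhibit the unipotent one-parameter groups $\exp(t\,\ad X''')$ and $\exp(t\,\ad Y')$, check that they give the elementary transformations, and note that these generate the full group, completing (b); the same $\mathfrak{s}$ and computation settle (c).
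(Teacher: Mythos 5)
Your proposal is correct, and most of it runs parallel to the paper's own proof: the reduction of the first assertion to Proposition \ref{mhhh} and to the list of lifts in Eq.\ \eqref{ggyy}, the forward implications of (b) and (c) (the intertwining relation forces $\rho$ to fix $X_i, Y_i, H_i$ for $2\le i\le 7$, and Lemma \ref{pj2} then yields the quadratic formulas with $ac-bd=1$), and item (a) as a reduction to $\varphi_6 \nsim \varrho_6$ from Theorem \ref{dymin} are all exactly the paper's steps. Where you genuinely diverge is the reverse implication of (b) and (c). The paper, in Eq.\ \eqref{u14c}, simply prescribes $\rho$ on the remaining Chevalley generators ($\rho(X_1)=aX_1+bY''$, $\rho(Y_1)=cY_1+dX''$, $\rho(H_1)=H_1+bdH+adX'''-bcY'$) and asserts that this is an inner automorphism; the verifications that these assignments extend to an automorphism of $E_7$, and that it is inner, are left implicit. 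Your construction supplies precisely those missing checks: by Eq.\ \eqref{labbb} each of $Y', X''', H$ spans a trivial $\varphi_6(D_6)$-submodule, so the triple $\mathfrak{s}=\langle X''', H, Y'\rangle$ centralizes $\varphi_6(D_6)$, and hence every $\exp(t\,\ad X''')$ and $\exp(t\,\ad Y')$ is automatically an inner automorphism fixing $\varphi_6(D_6)$ pointwise; on $\mathfrak{s}$ these act as the unipotent elementary matrices do in the adjoint representation of $SL(2,\mathbb{C})$ (for instance $\exp(t\,\ad X''')$ realizes $(a,b,c,d)=(1,0,1,-t)$ and $\exp(t\,\ad Y')$ realizes $(1,-t,1,0)$), and since the unipotent upper and lower triangular subgroups generate $SL(2,\mathbb{C})$, every tuple with $ac-bd=1$ is realized --- the sign ambiguity $(a,b,c,d)\mapsto(-a,-b,-c,-d)$ is invisible in the degree-two formulas, so the fact that the adjoint action only sees $PSL(2,\mathbb{C})$ costs nothing. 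Your handling of (c), via $\varrho_6(D_6)=\varphi_6(\rho_6(D_6))=\varphi_6(D_6)$ as subalgebras so that the same centralizer and Lemma \ref{pj2} apply verbatim, is likewise a cleaner justification than the paper's ``proved in a similar manner.'' In short, the paper's route buys explicit formulas on generators, while yours buys an automatic proof of innerness and of pointwise fixing of $\varphi_6(D_6)$ --- the two facts Eq.\ \eqref{u14c} takes on faith.
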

\begin{proof}
Let $\widetilde{\varphi_6}^{0,(\alpha,\beta, \gamma)} \sim \widetilde{\varphi_6}^{0,(\alpha',\beta', \gamma')}$, and let $\rho$ be an automorphism of $E_7$ such that 
\begin{equation}\label{u10}
\rho \circ \widetilde{\varphi_6}^{0,(\alpha,\beta, \gamma)} = \widetilde{\varphi_6}^{0,(\alpha',\beta', \gamma')}.
\end{equation}
Eq. \eqref{u10} implies
\begin{equation}\label{u12}
\rho(\alpha Y' + \beta X''' + \gamma H ) = \alpha' Y' + \beta' X''' + \gamma' H.
\end{equation}
Eq. \eqref{u10} implies that $\rho$ fixes $X_i, Y_i$, and $H_i$ for $2 \leq i \leq 7$.  Hence Lemma \ref{pj2} implies 
\begin{equation}\label{u14b}
\begin{array}{llllllll}
 \rho(\alpha Y' + \beta X''' + \gamma H ) = \alpha\rho(Y') + \beta\rho(X''') + \gamma \rho(H)=\\ (\alpha  c^2-\beta b^2-2\gamma bc) Y'
 +(-\alpha d^2+\beta a^2+2\gamma ad)X'''+\\( -\alpha cd +\beta ab+\gamma (ac+bd))H, 
\end{array}
\end{equation}
for some $a, b, c, d \in \mathbb{C}$ such that $ac-bd=1$.  Hence, Eqs. \eqref{u12} and \eqref{u14b} imply $(\alpha', \beta', \gamma' ) =$  $(\alpha  c^2-\beta b^2-2\gamma bc, -\alpha d^2+\beta a^2+2\gamma ad, -\alpha cd +\beta ab+\gamma (ac+bd))$.  

We now prove the opposite implication of item (b).  Let $\rho: E_7 \rightarrow E_7$ be the (inner) automorphism of $E_7$ that fixes $X_i, Y_i$, and $H_i$ for $2 \leq i \leq 7$, and 
\begin{equation}\label{u14c}
\begin{array}{llllllll}
\rho(X_1)&=& aX_1+bY'',\\
\rho(Y_1)&=& cY_1+dX'',\\
\rho(H_1)&=& H_1+bdH+adX'''-bcY'.
\end{array}
\end{equation}
Then, 
\begin{equation}
\begin{array}{lllllllllll}
\rho(Y') &=& c^2Y'-cdH-d^2X''', \\
\rho(X''') &=& a^2X'''+abH-b^2Y',\\
 \rho(H)&=&(ac+bd)H+2adX'''-2bcY', 
\end{array}
\end{equation}
so that 
\begin{equation}\label{u14bbbb}
\begin{array}{llllllll}
 \rho(\alpha Y' + \beta X''' + \gamma H ) = \alpha' Y'+\beta'X'''+\gamma'H,
\end{array}
\end{equation}
as in Eq. \eqref{u14b}, which implies $\rho \circ \widetilde{\varphi_6}^{0,(\alpha,\beta, \gamma)} = \widetilde{\varphi_6}^{0,(\alpha',\beta', \gamma')}$.  Hence, we have established item (b).  Item (c) is proved in a similar manner.  

Now we consider item (a).  Since $\varphi_6 \nsim \varrho_6$ by Theorem \ref{dymin}, then  $\widetilde{\varphi_6}^{0,(\alpha,\beta, \gamma)}$ $\nsim$ $\widetilde{\varrho_6}^{0,(\alpha',\beta', \gamma')}$.
\end{proof}

\subsection{Embeddings of the abelian extensions of  ${D}_5$ into $E_{6}$}

We begin by constructing lifts of the embeddings $\varphi_5$ and $\varrho_5$  to $D_5 \inplus V_{D_5}(\lambda_5)$, $D_5 \inplus V_{D_5}(\lambda_4)$, and to 
$
D_5 \inplus V_{D_{5}}(0) \cong 
D_5 \inplus \mathbb{C}^{}$.  
Let $u$ be a highest weight vector of the $D_5$-representation $V_{D_5}(\lambda_5)$, $V_{D_5}(\lambda_4)$,  or $\mathbb{C}$, respectively.  By Proposition \ref{mhhhh}, the following then define all possible lifts of  $\varphi_5$ or $\varrho_5$ to $D_5 \inplus V_{D_5}(\lambda_5)$, $D_5 \inplus V_{D_5}(\lambda_4)$, and to $D_5 \inplus \mathbb{C}^{}$, respectively, for each $\alpha \in \mathbb{C}^*$:
\begin{equation}\label{qqwwe}
\begin{array}{llllllllllllllllll}
\begin{array}{ccccccccccccc}
\widetilde{\varphi_5}^{\lambda_5, \alpha}: &D_5 \inplus V_{D_5}(\lambda_5) &\hookrightarrow & E_6\\
&u & \mapsto & \alpha X'',&
\\
\widetilde{\varrho_5}^{\lambda_5, \alpha}: &D_5 \inplus V_{D_5}(\lambda_5) &\hookrightarrow & E_6\\
&u & \mapsto & \alpha  Y_1,&
\\
\widetilde{\varphi_5}^{\lambda_4, \alpha}: &D_5 \inplus V_{D_5}(\lambda_4) &\hookrightarrow & E_6\\
&u & \mapsto & \alpha Y_1,&
\\
\widetilde{\varrho_5}^{\lambda_4, \alpha}: &D_5 \inplus V_{D_5}(\lambda_4) &\hookrightarrow & E_6\\
&u & \mapsto & \alpha X'',&
\\
\widetilde{\varphi_5}^{0, \alpha}: &D_5 \inplus \mathbb{C} &\hookrightarrow & E_6\\
&u & \mapsto & \alpha H,&
\\
\widetilde{\varrho_5}^{0, \alpha}: &D_5 \inplus \mathbb{C} &\hookrightarrow & E_6\\
&u & \mapsto & \alpha H.&
\end{array}
\end{array}
\end{equation}
The embeddings of  \eqref{qqwwe} form the basis of the classification of abelian extension of $D_5$ into $E_6$, up to equivalence, in Theorem \ref{classaaddd}.  We first prove two useful lemmas.

\begin{lemma}\label{paacc}
For all $\alpha \in \mathbb{C}^*$, 
\begin{equation}
\begin{array}{lllllllllll}
\widetilde{\varphi_5}^{\lambda_5,1}&\sim&\widetilde{\varphi_5}^{\lambda_5,\alpha}, &  \widetilde{\varrho_5}^{\lambda_5,1}&\sim&\widetilde{\varrho_5}^{\lambda_5,\alpha},  \\
 \widetilde{\varphi_5}^{\lambda_4,1}&\sim&\widetilde{\varphi_5}^{\lambda_4,\alpha}, &
  \widetilde{\varrho_5}^{\lambda_4,1}&\sim&\widetilde{\varrho_5}^{\lambda_4,\alpha},\\
    \widetilde{\varphi_5}^{\lambda_4,1}&\nsim&\widetilde{\varrho_5}^{\lambda_4,1}, &  \widetilde{\varphi_5}^{\lambda_5,1}&\nsim&\widetilde{\varrho_5}^{\lambda_5,1}.\\
\end{array}
\end{equation}
\end{lemma}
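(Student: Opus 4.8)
The plan is to split the six assertions of Lemma \ref{paacc} into two groups handled by quite different arguments: the four scaling equivalences on the one hand, and the two non-equivalences on the other.

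For the scaling equivalences I would imitate the construction of Lemma \ref{paa}. For $\beta \in \mathbb{C}^*$, introduce the inner automorphism $\sigma_\beta : E_6 \to E_6$ that fixes $X_i, Y_i, H_i$ for $2 \le i \le 6$ and sends $X_1 \mapsto \beta X_1$, $Y_1 \mapsto \beta^{-1} Y_1$, $H_1 \mapsto H_1$; this preserves the Chevalley--Serre relations and lies in the adjoint group, exactly as $\rho_\beta$ does in Lemma \ref{paa}. The crucial first observation is that $\sigma_\beta$ fixes the common image subalgebra $\varphi_5(D_5) = \varrho_5(D_5)$ pointwise: by Eq. \eqref{emb} this subalgebra is generated by the untouched generators $X_i, Y_i, H_i$, $2 \le i \le 6$ (cf. Eq. \eqref{iddd}, where both restrictions equal $[X']\cong V_{D_5}(\lambda_2)$ and $X'=X_{6,5,4,3,2,4,5}$ involves no index $1$). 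Consequently, composing $\sigma_\beta$ with any lift in Eq. \eqref{qqwwe} leaves the $D_5$-part equal to $\varphi_5$ (resp. $\varrho_5$) and only rescales the image of $u$.

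The second step is to compute the action of $\sigma_\beta$ on the two highest weight vectors $X''$ and $Y_1$. Since $\sigma_\beta$ is an algebra homomorphism and $X'' = X_{6,5,4,2,3,1,4,3,5,4,2}$ from Eq. \eqref{ghyyy} involves the index $1$ exactly once, one gets $\sigma_\beta(X'') = \beta X''$ (equivalently, the coefficient of $\alpha_1$ in the weight of $X''$ is $1$), while trivially $\sigma_\beta(Y_1) = \beta^{-1} Y_1$. Taking $\beta = \alpha$ then yields $\sigma_\beta \circ \widetilde{\varphi_5}^{\lambda_5,1} = \widetilde{\varphi_5}^{\lambda_5,\alpha}$ and $\sigma_\beta \circ \widetilde{\varrho_5}^{\lambda_4,1} = \widetilde{\varrho_5}^{\lambda_4,\alpha}$, and taking $\beta = \alpha^{-1}$ yields $\widetilde{\varphi_5}^{\lambda_4,1} \sim \widetilde{\varphi_5}^{\lambda_4,\alpha}$ and $\widetilde{\varrho_5}^{\lambda_5,1} \sim \widetilde{\varrho_5}^{\lambda_5,\alpha}$. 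This settles the first four lines.

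For the two non-equivalences I would argue exactly as in item (a) of Theorem \ref{classaab}. If $\widetilde{\varphi_5}^{\lambda_4,1} \sim \widetilde{\varrho_5}^{\lambda_4,1}$ via an inner automorphism $\rho$ of $E_6$, then restricting the relation $\rho \circ \widetilde{\varphi_5}^{\lambda_4,1} = \widetilde{\varrho_5}^{\lambda_4,1}$ to $D_5$ gives $\rho \circ \varphi_5 = \varrho_5$, i.e. $\varphi_5 \sim \varrho_5$, contradicting Theorem \ref{dymin}; the identical argument gives $\widetilde{\varphi_5}^{\lambda_5,1} \nsim \widetilde{\varrho_5}^{\lambda_5,1}$. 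The genuinely routine parts are the Chevalley--Serre (or GAP) verification that $\sigma_\beta$ is a well-defined inner automorphism and the bookkeeping that $1$ occurs once in $X''$. The one structural point I would flag, rather than a real obstacle, is that a \emph{single} automorphism $\sigma_\beta$ simultaneously governs both the $\varphi_5$ and $\varrho_5$ families; this is legitimate precisely because $\varphi_5$ and $\varrho_5$ share the same image subalgebra (Eq. \eqref{iddd}), so I would state that identification explicitly before invoking $\sigma_\beta$.
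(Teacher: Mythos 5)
Your proposal is correct and is essentially the paper's own proof: the paper uses exactly the same inner automorphism of $E_6$ (fixing $X_i, Y_i, H_i$ for $2 \leq i \leq 6$ and scaling $X_1 \mapsto \alpha X_1$, $Y_1 \mapsto \frac{1}{\alpha} Y_1$) to obtain the four scaling equivalences, and derives the two non-equivalences from $\varphi_5 \nsim \varrho_5$ (Theorem \ref{dymin}) just as you do. Your additional bookkeeping---that the common image $\varphi_5(D_5)=\varrho_5(D_5)$ is fixed pointwise and that the index $1$ occurs exactly once in $X''$ (so $\beta=\alpha$ suffices, unlike the $\beta^2=\alpha$ needed in Lemma \ref{paa})---is implicit in the paper and correctly verified.
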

\begin{proof}
 Define an inner automorphism  $\rho:E_6 \rightarrow E_6$ as follows:
\begin{equation}
\begin{array}{llllllllllll}
& H_{1} & \mapsto & H_{1},& X_{1} & \mapsto & \alpha X_{1}, & Y_{1} & \mapsto & \frac{1}{\alpha} Y_{1}, \\
& H_{i} & \mapsto & H_{i},& X_{i} & \mapsto &  X_{i}, & Y_{i} & \mapsto &  Y_{i},
\end{array}
\end{equation}
where $2\leq i\leq 6$.  We then have 
\begin{equation}
\begin{array}{lllllllllllll}
\rho_{} \circ \widetilde{\varphi_5}^{\lambda_5,1}&=&\widetilde{\varphi_5}^{\lambda_5,\alpha}, & \rho_{} \circ \widetilde{\varrho_5}^{\lambda_5,\alpha}&=&\widetilde{\varrho_5}^{\lambda_5,1}\\
\rho_{} \circ \widetilde{\varphi_5}^{\lambda_4,\alpha}&=&\widetilde{\varphi_5}^{\lambda_4,1}, & \rho_{} \circ \widetilde{\varrho_5}^{\lambda_4,1}&=&\widetilde{\varrho_5}^{\lambda_4,\alpha}.\\
\end{array}
\end{equation}
Since $\varphi_5 \nsim \varrho_5$ by Theorem \ref{dymin}, then $\widetilde{\varphi_5}^{\lambda_4,1}\nsim\widetilde{\varrho_5}^{\lambda_4,1}$, and
  $\widetilde{\varphi_5}^{\lambda_5,1}\nsim\widetilde{\varrho_5}^{\lambda_5,1}$. \end{proof}

\begin{lemma}\label{paaccc}
  The embeddings $\widetilde{\varphi_5}^{0, \alpha}$ and  $\widetilde{\varphi_5}^{0, \beta}$  are equivalent if and only if $\alpha = \beta$.    Similarly, the embeddings $\widetilde{\varrho_5}^{0, \alpha}$ and  $\widetilde{\varrho_5}^{0, \beta}$  are equivalent if and only if $\alpha = \beta$.   Finally,  $\widetilde{\varphi_5}^{0, \alpha}\nsim \widetilde{\varrho_5}^{0, \beta}$, for all $\alpha, \beta$.
\end{lemma}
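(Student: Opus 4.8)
The plan is to handle Lemma~\ref{paaccc} in three pieces, exactly mirroring its three sentences, and to reuse the structure of the proof of Lemma~\ref{paaa}. The first two claims concern equivalence \emph{within} the family of lifts over the trivial representation (first for $\varphi_5$, then for $\varrho_5$), and the third asserts that no $\varphi_5$-lift is equivalent to any $\varrho_5$-lift. The third claim is immediate: since $\widetilde{\varphi_5}^{0,\alpha}|_{D_5} = \varphi_5$ and $\widetilde{\varrho_5}^{0,\beta}|_{D_5} = \varrho_5$, and $\varphi_5 \nsim \varrho_5$ by Theorem~\ref{dymin}, any inner automorphism intertwining the two lifts would intertwine $\varphi_5$ and $\varrho_5$, a contradiction. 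So the real content is in the first two claims, which are genuinely parallel, so I would prove the $\varphi_5$ case in full and assert the $\varrho_5$ case ``by an identical argument.''

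\textbf{The $\varphi_5$ case.} The backward direction ($\alpha=\beta \Rightarrow$ equivalent) is trivial, taking $\rho = \mathrm{id}$. For the forward direction, suppose $\widetilde{\varphi_5}^{0,\alpha} \sim \widetilde{\varphi_5}^{0,\beta}$, so there is an inner automorphism $\rho$ of $E_6$ with $\rho \circ \widetilde{\varphi_5}^{0,\alpha} = \widetilde{\varphi_5}^{0,\beta}$. Restricting to $D_5$, this forces $\rho$ to fix $\varphi_5(D_5)$ pointwise (as the lifts agree with $\varphi_5$ on $D_5$), so in particular $\rho(H_i)=H_i$ for $2\le i\le 6$ and $\rho(X_i)=X_i,\ \rho(Y_i)=Y_i$ there. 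Evaluating on the generator $u$ of the trivial summand gives $\rho(\alpha H) = \beta H$, i.e.
\begin{equation}
\rho(H) = \frac{\beta}{\alpha}\, H = \frac{\beta}{\alpha}\Bigl(2H_1+\tfrac32 H_2+\tfrac52 H_3+3H_4+2H_5+H_6\Bigr).
\end{equation}
On the other hand, expanding $\rho(H)$ using $\rho(H_i)=H_i$ for $i\ge 2$ isolates $\rho(H_1)$ and yields
\begin{equation}
\rho(H_1) = \frac{\beta}{\alpha} H_1 + \frac{\tfrac{\beta}{\alpha}-1}{2}\Bigl(\tfrac32 H_2+\tfrac52 H_3+3H_4+2H_5+H_6\Bigr).
\end{equation}

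\textbf{Closing the argument.} Since $\rho$ must send $\varphi_5(D_5)$-highest-weight vectors to highest weight vectors of equal weight, and (by the decomposition \eqref{iiii}) $Y_1$ is such a vector spanning a one-dimensional weight space, there is $\gamma\in\mathbb C^*$ with $\rho(Y_1)=\gamma Y_1$. Applying $\rho$ to the relation $[H_1,Y_1]=-2Y_1$ two ways---once as $\rho(-2Y_1)=-2\gamma Y_1$, and once as $[\rho(H_1),\gamma Y_1]$ using the displayed formula for $\rho(H_1)$ together with the Chevalley--Serre brackets $[H_j,Y_1]$---produces a linear equation in $\beta/\alpha$ whose only solution is $\beta/\alpha=1$. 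The main obstacle, and the one genuine computation, is evaluating $[\rho(H_1),Y_1]$: one needs the coefficients $M^{E_6}_{1j}=\alpha_1(H_j)$ for $j=2,\dots,6$, i.e.\ reading off which simple roots are adjacent to node~$1$ in the $E_6$ Dynkin diagram (Figure~\ref{ddd}), so that $[H_j,Y_1]=-\alpha_1(H_j)Y_1$ contributes correctly. This is exactly the $E_6$ analogue of the $E_8$ computation in \eqref{zz5}, and I expect the resulting coefficient of $Y_1$ to be a nonconstant affine function of $\beta/\alpha$, forcing $\beta=\alpha$. The $\varrho_5$ case runs identically with $X''$ (the other highest weight vector of weight $\lambda_4$ or $\lambda_5$ as appropriate) playing the role of $Y_1$, so I would simply remark that it follows \emph{mutatis mutandis}.
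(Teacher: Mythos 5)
Your proposal follows the paper's proof essentially step for step: the same reduction $\rho(H)=\tfrac{\beta}{\alpha}H$, the same isolation of $\rho(H_1)$, the same use of $\rho(Y_1)=\gamma Y_1$ applied to $[H_1,Y_1]=-2Y_1$, and the same disposal of the $\varrho_5$ case and of $\widetilde{\varphi_5}^{0,\alpha}\nsim\widetilde{\varrho_5}^{0,\beta}$ via Theorem~\ref{dymin}. The one computation you deferred does come out as you predicted: since only node $3$ is adjacent to node $1$ in the $E_6$ diagram, $[\rho(H_1),Y_1]=-\tfrac{1}{4}\bigl(3\tfrac{\beta}{\alpha}+5\bigr)Y_1$, exactly the paper's Eq.~\eqref{z5}, forcing $\beta/\alpha=1$.
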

\begin{proof}
Let $\widetilde{\varphi_5}^{0, \alpha} \sim \widetilde{\varphi_5}^{0, \beta}$, and let $\rho$ be an automorphism of 
$E_6$ such that $\rho \circ \widetilde{\varphi_5}^{0, \alpha} = \widetilde{\varphi_5}^{0, \beta}$, which implies
\begin{equation}\label{z1}
\rho(H) = \frac{\beta}{\alpha} H= \frac{\beta}{\alpha}
\biggl(
2H_1+\frac{3}{2}H_2+\frac{5}{2}H_3+3H_4+2H_5+H_6
\biggr).
\end{equation}
Since $\rho$ is a Lie algebra homomorphism, and $\rho(H_i)=H_i$ for $2 \leq i \leq 6$, we also have
\begin{equation}\label{z2}
\rho(H) = 2 \rho(H_1)+\frac{3}{2}H_2+\frac{5}{2}H_3+3H_4+2H_5+H_6 .
\end{equation}
Eqs. \eqref{z1} and \eqref{z2} then yield
\begin{equation}\label{z3}
\rho(H_1) = \frac{\beta}{\alpha} H_1+
\biggl(
\frac{\beta}{\alpha}-1
\biggr)
\biggl(
\frac{3}{4}H_2+\frac{5}{4}H_3+\frac{3}{2}H_4+H_5+\frac{1}{2}H_6 
\biggr).
\end{equation}
The automorphism must send  highest weight vectors, with respect to the adjoint action of $\varphi_5(D_5)$,  to highest weight vectors of the same weight.  Hence, for some $\gamma \in \mathbb{C}^*$,
\begin{equation}\label{z4}
\rho(Y_1) = \gamma Y_1.
\end{equation}
Considering Eqs. \eqref{z3} and \eqref{z4},
\begin{equation}\label{z5}
\begin{array}{llllllllll}
\displaystyle \rho( [H_1, Y_1])&=&\rho( -2Y_1)=-2 \gamma Y_1,\\
 \displaystyle [\rho(H_1),\rho(Y_1)  ]&=& -\frac{\gamma}{4} (3\frac{\beta}{\alpha}+5)Y_1.
\end{array}
\end{equation}
Since $\rho$ is a Lie algebra homomorphism, Eq. \eqref{z5} implies $\frac{\beta}{\alpha}=1$.  Hence, if $\widetilde{\varphi_5}^{0, \alpha} \sim \widetilde{\varphi_5}^{0, \beta}$, then $\alpha=\beta$.  The opposite implication is obvious.

Similarly we prove $\widetilde{\varrho_5}^{0, \alpha} \sim \widetilde{\varrho_5}^{0, \beta}$ if and only if $\alpha = \beta$.
 Finally, since $\varphi_5 \nsim \varrho_5$ by Theorem \ref{dymin}, then $\widetilde{\varphi_5}^{0,\alpha}\nsim\widetilde{\varrho_5}^{0,\beta}$ for all $\alpha, \beta$.
\end{proof}

\begin{theorem}\label{classaaddd}
The only abelian extensions of $D_5$ that may be embedded into $E_6$ are $D_5 \inplus V_{D_7}(\lambda_5)$, $D_5 \inplus V_{D_7}(\lambda_4)$, and $D_5 \inplus \mathbb{C}^{}$.  The embeddings $\widetilde{\varphi_5}^{\lambda_5, 1}$ and $\widetilde{\varrho_5}^{\lambda_5,1}$ are a complete set of inequivalent embeddings of $D_5 \inplus V_{D_7}(\lambda_5)$ into $E_6$.  The embeddings $\widetilde{\varphi_5}^{\lambda_4, 1}$ and $\widetilde{\varrho_5}^{\lambda_4,1}$ are a complete set of inequivalent embeddings of $D_5 \inplus V_{D_7}(\lambda_4)$ into $E_6$.  
There are two infinite families of inequivalent embeddings of $D_5 \inplus \mathbb{C}$ into $E_6$, which contain all
inequivalent embeddings.  Each family is parameterized by a single continuous parameter:   $\widetilde{\varphi_5}^{0, \alpha}$ and $\widetilde{\varrho_5}^{0,\alpha}$, for $\alpha \in \mathbb{C}^*$.   These results are displayed in Table \ref{enddd}.
\end{theorem}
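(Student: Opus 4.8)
The plan is to follow the template of Theorem \ref{classaa}: the first assertion is just a restatement of Proposition \ref{mhhhh}, and the classification of embeddings reduces to assembling the equivalences and inequivalences already recorded in Lemmas \ref{paacc} and \ref{paaccc}. First I would observe that any embedding of an abelian extension $D_5 \inplus V$ into $E_6$, restricted to $D_5$, is an embedding of $D_5$ into $E_6$, hence by Theorem \ref{dymin} equivalent to either $\varphi_5$ or $\varrho_5$. After conjugating by a suitable inner automorphism I may assume the restriction is exactly $\varphi_5$ or $\varrho_5$, so every embedding of $D_5 \inplus V$ is a lift of one of these two maps. Proposition \ref{mhhhh} then pins down the image of a highest weight vector $u$ of $V$ up to a nonzero scalar, and Eq. \eqref{qqwwe} records the resulting exhaustive list of lifts.

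For the two nontrivial (spinor) extensions I would argue case by case. For $D_5 \inplus V_{D_5}(\lambda_5)$ the candidate lifts are $\widetilde{\varphi_5}^{\lambda_5, \alpha}$ and $\widetilde{\varrho_5}^{\lambda_5, \alpha}$ with $\alpha \in \mathbb{C}^*$; Lemma \ref{paacc} collapses the scalar parameter by giving $\widetilde{\varphi_5}^{\lambda_5, 1} \sim \widetilde{\varphi_5}^{\lambda_5, \alpha}$ and $\widetilde{\varrho_5}^{\lambda_5, 1} \sim \widetilde{\varrho_5}^{\lambda_5, \alpha}$, while the same lemma yields $\widetilde{\varphi_5}^{\lambda_5, 1} \nsim \widetilde{\varrho_5}^{\lambda_5, 1}$ (descending from $\varphi_5 \nsim \varrho_5$). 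Exactly two equivalence classes therefore survive. The extension $D_5 \inplus V_{D_5}(\lambda_4)$ is handled identically, with the roles of $X''$ and $Y_1$ interchanged, producing the two representatives $\widetilde{\varphi_5}^{\lambda_4, 1}$ and $\widetilde{\varrho_5}^{\lambda_4, 1}$.

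For the trivial extension $D_5 \inplus \mathbb{C}$, Lemma \ref{paaccc} works in the opposite direction: the scalar is now a genuine invariant, with $\widetilde{\varphi_5}^{0, \alpha} \sim \widetilde{\varphi_5}^{0, \beta}$ (and $\widetilde{\varrho_5}^{0, \alpha} \sim \widetilde{\varrho_5}^{0, \beta}$) if and only if $\alpha = \beta$, and with no $\widetilde{\varphi_5}^{0, \alpha}$ equivalent to any $\widetilde{\varrho_5}^{0, \beta}$. Thus the embeddings of $D_5 \inplus \mathbb{C}$ form two one-parameter families, and all the claims of the theorem follow. Since the genuine content lives in the two lemmas, the proof itself is bookkeeping; the only point demanding care is completeness, namely confirming that Eq. \eqref{qqwwe} really exhausts the lifts. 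This rests on Proposition \ref{mhhhh} having correctly identified every $D_5$-invariant abelian subspace of $E_6$ together with its highest weight vector up to scalar, so that no embedding can slip past the enumeration.
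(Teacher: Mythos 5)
Your proposal is correct and follows essentially the same route as the paper's own proof: restate Proposition \ref{mhhhh} for the list of extensions, invoke Theorem \ref{dymin} to reduce any embedding to a lift of $\varphi_5$ or $\varrho_5$ as enumerated in Eq.\ \eqref{qqwwe}, and then apply Lemmas \ref{paacc} and \ref{paaccc} to sort the lifts into equivalence classes. Your added remarks --- conjugating so the restriction equals $\varphi_5$ or $\varrho_5$ exactly, and noting that completeness of the enumeration rests on Proposition \ref{mhhhh} identifying every invariant abelian subspace with its highest weight vector up to scalar --- merely make explicit what the paper leaves implicit.
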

\begin{proof}
The only abelian extensions of $D_5$ that may be embedded into $E_6$ are $D_5 \inplus V_{D_7}(\lambda_5)$, $D_5 \inplus V_{D_7}(\lambda_4)$, and $D_5 \inplus \mathbb{C}^{}$ (a repetition of Proposition \ref{mhhhh}).  

The restriction to $D_5$ of any embedding of an abelian extension of $D_5$ into $E_6$  is equivalent to $\varphi_5$ or $\varrho_5$ by Theorem \ref{dymin}.  Further, \eqref{qqwwe} lists all possible lifts of $\varphi_5$ or $\varrho_5$ to the permissible abelian extensions of $D_5$.  The result then follows from Lemmas \ref{paacc} and \ref{paaccc}.
\end{proof}

\begin{table} [!h]\renewcommand{\arraystretch}{1.6} \caption{Classification of embeddings of abelian extensions of $D_5$ into $E_6$ up to equivalence.} \label{enddd}\begin{center}
\begin{tabular}{|c|c|c|clclcl} 
\hline
Abelian Extension $D_5 \inplus V$ &  Embedding $D_5 \inplus V \hookrightarrow E_6$\\
\hline \hline
$D_5 \inplus V_{D_5}(\lambda_5)$& $\widetilde{\varphi_5}^{\lambda_5, 1}$, ~~$\widetilde{\varrho_5}^{\lambda_5, 1}$\\ 
\hline 
$D_5 \inplus V_{D_5}(\lambda_4)$& $\widetilde{\varphi_5}^{\lambda_4, 1}$, ~~$\widetilde{\varrho_5}^{\lambda_4, 1}$ \\ 
\hline 
$
D_5 \inplus V_{D_{5}}(0) \cong
D_5 \inplus \mathbb{C}^{}$ &  $\widetilde{\varphi_5}^{0, \alpha}$, ~~$\widetilde{\varrho_5}^{0, \alpha}$, ~~ $\alpha \in \mathbb{C}^*$\\
\hline
\end{tabular}\end{center}
\end{table}

\section{Representations of  $D_{n}\inplus V$ from representations of $E_{n+1}$}\label{sec8}

As an application of the above classification, we also consider the irreducible representations of $E_{n+1}$ restricted to $D_n \inplus V$, for each abelian extension and each embedding.  The results are summarized in Theorem \ref{ind} and Examples \ref{indaa} to \ref{indc} below.

\begin{theorem}\label{ind}
Any irreducible representation of $E_{6}$ restricts to an indecomposable representation of $D_{5}\inplus V_{D_5}(\lambda_4)$ or $D_{5}\inplus V_{D_5}(\lambda_5)$ with respect to any embedding of $D_{5}\inplus V_{D_5}(\lambda_4)$ or $D_{5}\inplus V_{D_5}(\lambda_5)$  into $E_6$, respectively.
\end{theorem}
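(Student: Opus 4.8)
The plan is to prove indecomposability by showing that $U|_{D_5\inplus V}$ has a \emph{simple socle} for every irreducible $E_6$-module $U$, since a finite-dimensional module with simple socle is automatically indecomposable. Write $\mathfrak{d}$ for the regular $D_5$ subalgebra $\varphi_5(D_5)=\varrho_5(D_5)$ and $V'\subseteq E_6$ for the image of the abelian ideal $V$. By Proposition \ref{mhhhh} and Theorem \ref{classaaddd}, $V'$ is one of the two $16$-dimensional subspaces $[X'']_{\varphi_5(D_5)}$ or $[Y_1]_{\varphi_5(D_5)}$. The decomposition \eqref{iiii} is exactly the cominuscule $\mathbb{Z}$-grading $E_6=\mathfrak{g}_{-1}\oplus\mathfrak{g}_0\oplus\mathfrak{g}_{+1}$ attached to the end node $1$ of $E_6$: here $\mathfrak{g}_0=\mathfrak{d}\oplus\mathbb{C}H$, the piece $\mathfrak{g}_{+1}=[X'']_{\varphi_5(D_5)}$ consists of positive root vectors, and $\mathfrak{g}_{-1}=[Y_1]_{\varphi_5(D_5)}$ of negative ones, so that $V'=\mathfrak{g}_{+1}$ or $V'=\mathfrak{g}_{-1}$. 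I will carry out the argument for $V'=\mathfrak{g}_{+1}$; the case $V'=\mathfrak{g}_{-1}$ is identical after exchanging highest for lowest weights.

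First I would record a general fact: every simple $\mathfrak{d}\inplus V'$-module $S$ is annihilated by $V'$. Indeed $V'$ is an abelian ideal acting on $S$ by commuting nilpotent operators, so the common kernel $S^{V'}=\{s:V's=0\}$ is nonzero; and since $[\mathfrak{d},V']\subseteq V'$, one checks $S^{V'}$ is $\mathfrak{d}$-stable, hence a nonzero submodule, hence all of $S$. Consequently $\mathrm{soc}(U)\subseteq U^{V'}=\{u\in U:V'u=0\}$; and because $U^{V'}$ is a $\mathfrak{d}$-stable submodule on which $V'$ acts trivially and the semisimple algebra $\mathfrak{d}$ acts completely reducibly, $U^{V'}$ is itself a semisimple $\mathfrak{d}\inplus V'$-module, so in fact $\mathrm{soc}(U)=U^{V'}$. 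It therefore suffices to prove that $U^{V'}$ is an \emph{irreducible} $\mathfrak{d}$-module.

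The key structural input is the identity $\mathfrak{n}^{+}_{E_6}=\mathfrak{n}^{+}_{\mathfrak{d}}\oplus\mathfrak{g}_{+1}$, valid since $\mathfrak{d}$ is regular with simple roots among those of $E_6$ and node $1$ is cominuscule. Let $v_\Lambda$ be the $E_6$-highest weight vector of $U$; it is killed by $\mathfrak{n}^{+}_{\mathfrak{d}}$ and by $\mathfrak{g}_{+1}$, so $v_\Lambda\in U^{V'}$ and $v_\Lambda$ is a $\mathfrak{d}$-highest weight vector. Using $[\mathfrak{g}_{+1},\mathfrak{n}^{-}_{\mathfrak{d}}]\subseteq\mathfrak{g}_{+1}$, an induction on word length shows that the entire irreducible $\mathfrak{d}$-module $W=\mathcal{U}(\mathfrak{n}^{-}_{\mathfrak{d}})v_\Lambda$ is annihilated by $\mathfrak{g}_{+1}$, so $W\subseteq U^{V'}$. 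Conversely, any $\mathfrak{d}$-highest weight vector $u\in U^{V'}$ satisfies $\mathfrak{n}^{+}_{\mathfrak{d}}u=0$ and $\mathfrak{g}_{+1}u=0$, hence $\mathfrak{n}^{+}_{E_6}u=0$; since $U$ is irreducible its space of $E_6$-highest weight vectors is one-dimensional, forcing $u\in\mathbb{C}v_\Lambda$. Thus the semisimple $\mathfrak{d}$-module $U^{V'}$ has a one-dimensional space of highest weight vectors and is therefore irreducible, equal to $W$. This gives $\mathrm{soc}(U)=U^{V'}$ simple, so $U|_{D_5\inplus V}$ is indecomposable.

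The remaining embeddings are then handled uniformly. For $V'=\mathfrak{g}_{-1}$ the same argument runs verbatim with the $E_6$-lowest weight vector in place of $v_\Lambda$, with $\mathfrak{n}^{-}$ in place of $\mathfrak{n}^{+}$, and with $\mathfrak{n}^{-}_{E_6}=\mathfrak{n}^{-}_{\mathfrak{d}}\oplus\mathfrak{g}_{-1}$; this covers $\widetilde{\varrho_5}^{\lambda_5,1}$ and $\widetilde{\varphi_5}^{\lambda_4,1}$. Since indecomposability depends only on the image subalgebra $\mathfrak{d}\inplus V'\subseteq E_6$ and not on whether it is identified abstractly with $D_5\inplus V_{D_5}(\lambda_4)$ or $D_5\inplus V_{D_5}(\lambda_5)$, all four embeddings of Theorem \ref{classaaddd} are treated at once. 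I expect the only delicate point to be justifying the structural facts about the grading, namely that $V'$ is exactly a graded piece $\mathfrak{g}_{\pm1}$ and that $\mathfrak{n}^{\pm}_{E_6}=\mathfrak{n}^{\pm}_{\mathfrak{d}}\oplus\mathfrak{g}_{\pm1}$; these follow from the decomposition \eqref{iiii}, the cominuscularity of node $1$ of $E_6$, and the explicit root-vector descriptions in \eqref{ghyyy}.
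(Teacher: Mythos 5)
Your proof is correct and follows essentially the same route as the paper: the paper's own (one-line, details omitted) proof rests precisely on your key structural input, namely that the image of each embedding from Theorem \ref{classaaddd} contains all positive or all negative root vectors of $E_6$ (your identity $\mathfrak{n}^{\pm}_{E_6}=\mathfrak{n}^{\pm}_{\mathfrak{d}}\oplus\mathfrak{g}_{\pm 1}$ from the cominuscule grading at node $1$), and it cites \cite{dr3} for the standard consequence that irreducible modules then restrict indecomposably. Your socle argument simply supplies, correctly, the details the paper delegates to \cite{dr3}, and in fact proves slightly more by identifying $\mathrm{soc}(U)=U^{V'}$ as the irreducible $\mathfrak{d}$-module generated by the extreme weight vector of $U$.
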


\begin{proof}
The  theorem follows immediately from the observation that the image of $D_5 \inplus V_{D_5}(\lambda_4)$ or $D_5 \inplus V_{D_5}(\lambda_5)$ under any embedding into $E_6$, all of which were identified in Theorem \ref{classaaddd}, contains 
all positive or  all negative root vectors of $E_6$ (see for instance  \cite{dr3});  
thus its proof is 
omitted.
\end{proof}

 The following three examples show that for every case other than the two considered in Theorem \ref{ind}, an irreducible representation of 
 $E_{n+1}$ may decompose when restricted to an abelian extension $D_n \inplus V$, under any embedding.

\begin{example}\label{indaa}
Irreducible representations of $E_7$ may decompose when restricted to $D_6 \inplus \mathbb{C}$.  For instance, direct calculation gives us the following decomposition of the $E_7$ fundamental representation  $V_{E_7}(\lambda_1)$:
\begin{equation}
\begin{array}{llllllllllllll}
V_{E_7}(\lambda_1)|_{\widetilde{\varphi_7}^{0,(\alpha,\beta, \gamma)}} &\cong&
 V_{D_6} (\lambda_2) \oplus (V_{D_6}(\lambda_5) + V_{D_6}(\lambda_5)) \oplus \\
 &&  (V_{D_6}(0) + V_{D_6}(0)+ V_{D_6}(0)),\\
V_{E_7}(\lambda_1)|_{\widetilde{\varrho_7}^{0,(\alpha,\beta, \gamma)}} &\cong&
 V_{D_6} (\lambda_2) \oplus (V_{D_6}(\lambda_6) + V_{D_6}(\lambda_6))\oplus\\
  && (V_{D_6}(0) + V_{D_6}(0)+ V_{D_6}(0)).\\
\end{array}
\end{equation}
The decomposition  of $V_{D_6}(\lambda_5) + V_{D_6}(\lambda_5)$, $V_{D_6}(\lambda_6) + V_{D_6}(\lambda_6)$, and 
$V_{D_6}(0) + V_{D_6}(0)+ V_{D_6}(0)$  will depend on the values of $\alpha, \beta$, and $\gamma$.  
\end{example}

\begin{example}\label{inda}
The adjoint representation of $E_6$  decomposes when restricted to $D_5 \inplus \mathbb{C}$, with respect to any embedding.  Similarly, the adjoint 
representation
of $E_8$  decomposes when restricted to $D_7 \inplus \mathbb{C}$, with respect to any embedding.  In each case,  the decomposition with respect to $D_5 \inplus \mathbb{C}$ or $D_7 \inplus \mathbb{C}$ is the same as the decomposition with respect to the adjoint action of the image of $D_5$ or $D_7$, respectively. 

The reason for this decomposition is that the highest weight vector of $\mathbb{C}$, with respect to the adjoint action of the image of $D_5$ or $D_7$, is an element of a Cartan subalgebra of $E_6$ or $E_8$, respectively (see \eqref{iiii} and \eqref{iiiib} or \eqref{hhhhj}, respectively).  

More generally,  the decomposition of an $E_6$ or $E_8$ irrep with respect to $D_5 \inplus \mathbb{C}$ or $D_7 \inplus \mathbb{C}$, respectively, is the same as the decomposition with respect to the restricted action of the image of $D_5$ or $D_7$, respectively.

\end{example}

\begin{example}\label{indc}
 Irreducible representations of $E_8$ may decompose when restricted to $D_7 \inplus \mathbb{C}^{14}$.  
 For instance, direct calculation gives us the following decomposition of the $E_8$ fundamental representation $V_{E_8}(\lambda_8)$:
\begin{equation}
\begin{array}{llllllllllllllllll}
V_{E_8}(\lambda_8)|_{\widetilde{\varphi_7}^{\lambda_1,1}} \cong V_{E_8}(\lambda_8)|_{\widetilde{\varphi_7}^{\lambda'_1,1}} 
\cong\\( V_{D_7} (\lambda_2) + 2V_{D_7} (\lambda_1)+ V_{D_7}(0))
\oplus (V_{D_7} (\lambda_6) + V_{D_7} (\lambda_7)).
\end{array}
\end{equation}
Each set of parentheses on the right side of the last equation contains a representation of ${D}_7\inplus \mathbb{C}^{14}$, described in terms of the decomposition of its restriction to $D_7$.
\end{example}

\section{Conclusions}

We determined all abelian extensions of $D_n$ that may be embedded into the exceptional Lie algebra $E_{n+1}$, $n=5, 6$, and $7$.  We then classified each of these embeddings, up to equivalence.  The classifications are summarized in Theorems \ref{classaa}, \ref{classaab}, and \ref{classaaddd}.

As an application of the  classification, we also considered the irreducible representations of $E_{n+1}$ restricted to $D_n \inplus V$, for each abelian extension and each embedding.  Any irreducible representation of $E_{6}$ restricts to an indecomposable representation of $D_{5}\inplus V_{D_5}(\lambda_4)$ or $D_{5}\inplus V_{D_5}(\lambda_5)$ with respect to any embedding.  For every other case, an irreducible representation of $E_{n+1}$ restricted to $D_n \inplus V$ may decompose.

\section*{Acknowledgements}

The work of A.D. and D.K. is partially supported by research grants from the Professional Staff Congress/City University of New York.  The work 
of J.R. is partially supported by the Natural Sciences and Engineering Research Council.  
We thank Andrey Minchenko for a helpful conversation on embeddings of simple Lie algebras into exceptional Lie algebras.

\end{document}